\newtheorem{theorem}{Theorem}[section]
\newtheorem{corollary}{Corollary}[section]
\newtheorem{definition}{Definition}[section]
\newtheorem{lemma}{Lemma}[section]
\newtheorem{proposition}{Proposition}[section]
\newtheorem{assumption}{Assumption}[section]
\newenvironment{proof}[1][Proof]{\textbf{#1.} }{\ \rule{0.5em}{0.5em} \vspace{1ex}}
\def\real{\mathbb{R}}
\newcommand{\eps}{\epsilon}
\newcommand{\epsg}{\epsilon_g}
\newcommand{\epsH}{\epsilon_H}
\newcommand{\low}{\mathrm{low}}
\newcommand{\up}{\mathrm{up}}
\newcommand{\ecurv}{\epsilon^{1/2}}
\DeclareMathOperator{\pfun}{\varrho}
\renewcommand{\Pr}{\mathbb{P}}
\newcommand{\E}[1]{\mathbb{E}\left[#1\right]}
\def\skn{\pi}
\def\cch{\kappa_H}
\def\ccg{\kappa_g}
\def\sta{\mathrm{sta}}
\def\Tj{T_{\epsilon,J}}
\def\smallnumerics{0}
\title{A Subsampling Line-Search Method with Second-Order Results}
\author{
E. Bergou \thanks{MaIAGE, INRAE, Universit\'e Paris-Saclay, 78350 Jouy-en-Josas, France
 ({\tt elhoucine.bergou@inra.fr}). King  Abdullah  University  of  Science  and  Technology  (KAUST),  Thuwal,  Saudi  Arabia. 
 This author received support from the AgreenSkills+ fellowship programme which has received funding from the EU's Seventh Framework Programme under grant agreement No FP7-609398 (AgreenSkills+ contract).
}
\and
Y. Diouane\thanks{ISAE-SUPAERO, Universit\'e de Toulouse, 31055 Toulouse Cedex 4, France
 ({\tt youssef.diouane@isae.fr}).
}
\and
V. Kunc \thanks{Department of Computer Science, Faculty of Electrical Engineering, Czech Technical University in Prague ({\tt kuncvlad@fel.cvut.cz}). Support for this author was provided by the CTU SGS grant no. SGS17/189/OHK3/3T/13.}
\and
V. Kungurtsev \thanks{Department of Computer Science, Faculty of Electrical Engineering, Czech Technical University in Prague ({\tt vyacheslav.kungurtsev@fel.cvut.cz}). Support for this author was provided by the OP VVV project CZ.02.1.01/0.0/0.0/16\_019/0000765 
"Research Center for Informatics".}
\and
C. W. Royer \thanks{LAMSADE, CNRS, Universit\'e Paris-Dauphine, Universit\'e PSL, 75016 PARIS, FRANCE ({\tt clement.royer@dauphine.psl.eu}). Support for this author was partially provided by Subcontract 3F-30222 from Argonne National Laboratory.} 
}
\newcommand{\revised}[1]{\textcolor{black}{#1}}
\def\skn{\pi}
\def\cch{\kappa_H}
\def\ccg{\kappa_g}
\def\sta{\mathrm{sta}}
\def\Tj{T_{\epsilon,J}}
\begin{document}
%%%%%%%%%%%%%%%%

\maketitle
{\small
\begin{abstract}
In many contemporary optimization problems such as those arising in machine 
learning, it can be computationally challenging or even infeasible to evaluate 
an entire function or its derivatives. This motivates the use of stochastic 
algorithms that sample problem data, which can jeopardize the guarantees 
obtained through classical globalization techniques in optimization such as a 
line search. Using subsampled function values is particularly challenging for 
the latter strategy, which relies upon multiple evaluations. For nonconvex 
data-related 
problems, such as training deep learning models, one aims 
at developing methods that converge to second-order stationary points quickly,
i.e., escape saddle points efficiently. This is particularly difficult to 
ensure when one only accesses subsampled approximations of the objective and its 
derivatives.

In this paper, we describe a stochastic algorithm based on negative curvature and 
Newton-type directions that are computed for a subsampling model of the objective. 
A line-search technique is used to enforce suitable decrease for this model, and 
for a sufficiently large sample, a similar amount of reduction holds for the true 
objective. We then present worst-case complexity guarantees for a notion of 
stationarity tailored to the subsampling context. Our analysis encompasses 
the deterministic regime, and allows us to identify sampling requirements for 
second-order line-search paradigms. As we illustrate through real data 
experiments, these worst-case estimates need not be satisfied for our method to 
be competitive with first-order strategies in practice.
\end{abstract}
}%

\begin{center}
\textbf{Keywords:}
Nonconvex optimization; finite-sum problems; subsampling methods; 
negative curvature; worst-case  complexity.
\end{center}

%%%%%%%%%%%%%%%%%%%%%%%%%%%%%%%%%%%%%%%%%%%%%%%%%%%%%%%%%%%%%%%%%%%%%%%%%%%%%%%
\section{Introduction} \label{sec:intro}
%%%%%%%%%%%%%%%%%%%%%%%%%%%%%%%%%%%%%%%%%%%%%%%%%%%%%%%%%%%%%%%%%%%%%%%%%%%%%%%

In this paper, we aim to solve
\begin{equation}\label{eq:prob1}
\min_{x \in \real^n} f(x) :=  \frac{1}{N}\sum_{i=1}^N f_i(x),
\end{equation}
where the objective function $f$ is not necessarily convex  and the components 
$f_i$ are assumed to be twice-continuously differentiable on $\real^n$. 
We are interested in problems in which the number of components $N \ge 1$ is 
extremely large, so that it becomes computationally 
infeasible to evaluate the entire function, its gradient or its Hessian. 
 
To overcome this issue, we consider the use of \emph{subsampling techniques} 
to compute stochastic estimates of the objective function, its gradient and its 
Hessian. Given a random set $\mathcal{S}$ sampled from $\{1, \ldots, N \}$ and a 
point $x \in \real^n$, we use the following estimates of $f$ and its 
derivatives:
\begin{equation} \label{eq:subsampmodel}
\hat f(x;\mathcal{S}) := \frac{1}{|\mathcal{S}|} \sum_{i\in \mathcal{S}} f_i(x), \quad
g(x;\mathcal{S}) := \frac{1}{|\mathcal{S}|} \sum_{i\in \mathcal{S}} \nabla f_i(x), \quad 
H(x;\mathcal{S}) := \frac{1}{|\mathcal{S}|} \sum_{i\in \mathcal{S}} \nabla^2 f_i(x),
\end{equation}
where $|\mathcal{S}|$ 
denotes the cardinal number of the sampling set $\mathcal{S}$. We are interested in iterative 
minimization procedures that choose a new random sampling set $\mathcal{S}$ at every 
iteration. 

The standard subsampling optimization procedure for such a problem is the (batch) stochastic gradient 
descent (SGD) method, wherein the gradient is estimated by one component gradient $\nabla f_i$ 
(or a batch of component), and a step is taken in the negative of this direction. 
%although the original SGD method relies on a single 
%sample per iteration, using a batch of samples can be beneficial as it reduces the variance of the 
%gradient estimate. 
The SGD framework is ubiquitous in a variety of applications, including 
large-scale machine learning problems, particularly those arising from the training of deep neural net 
architectures~\cite{bottou2018reviewml}. However, it is known to be sensitive to nonconvexity, 
particularly in the context of training deep neural nets. For such problems, it has indeed been observed 
that the optimization landscape for the associated (nonconvex) objective exhibits a significant number of 
saddle points, around which the flatness of the function tends to slow down the convergence of 
SGD~\cite{dauphin2014identifying}. This behavior is typical of first-order methods, despite the fact 
that those schemes almost never converge to saddle points~\cite{lee2019first}. By incorporating 
second-order information, one can guarantee that saddle points can be escaped from at a favorable rate: 
various algorithms that provide such guarantees 
while only requiring gradient or Hessian-vector products have been proposed in the literature~\cite{agarwal2017fasterrate,allen2018natasha,liu2017noisy,xu2018first}. 
Under certain accuracy conditions, which can be satisfied with arbitrarily high probability by controlling 
the size of the sample, these methods produce a sequence of iterates that converge to a local minimizer at a 
certain rate. Alternatively, one can extract second-order information and escape 
saddle points using accelerated gradient techniques in the stochastic 
setting~\cite{tripuraneni2018stochastic}. The results are also in high probability, with a priori tuned 
stepsizes. Noise can 
be used to approximate second-order information as well~\cite{xu2018first}. 
Recent proposals~\cite{xu2019newton,yao2018inexact} derive high probability convergence results 
with second-order steps (e.g., Newton steps) based on sampled derivatives and exact objective values, by means of trust-region and 
cubic regularization frameworks.
% 
%Subsampling can be viewed as a particular case of stochastic optimization where the underlying 
%distribution takes values within a discrete set. 
\textcolor{black}{Stochastic subsampling Newton methods, 
including~\cite{berahas2017newtonsketchsubsampled, bollapragada2019subsamplednewton, byrd2011stochhessian, erdogdu2015subsampled, pilanci2017newtonsketch, roosta2019subsampled,xu2016subsampled},
exploit second-order information to accelerate the convergence while typically using
a line search on exact function values to ensure global convergence.}
In the general stochastic optimization setting, 
a variety of algorithms have been extended to handle access to  sampled derivatives, and 
possibly function values: of particular interest 
to us are the algorithms endowed with complexity guarantees. When exact function values can be computed, one can employ strategies based on 
line search~\cite{cartis2018probamodels}, cubic 
regularization~\cite{cartis2018probamodels,kohler2017sub} or trust region~\cite{curtis2019stochtr,gratton2018trprobamodels} to compute a step of suitable length. 
Many algorithms building on SGD require the tuning of the step size parameter (also called 
learning rate), which can be cumbersome without knowledge of the Lipschitz constant. On the contrary, 
methods that are based on a globalization technique (line search, trust region, quadratic or cubic 
regularization) can control the size of the step in an adaptive way, and are thus less sensitive to 
parameter tuning.

In spite of their attractive properties with respect to the step size, globalized techniques are 
challenging to extend to the context of inexact function values. Indeed, these methods traditionally 
accept new iterates only if they produce a sufficient reduction of the objective value. 
Nevertheless, inexact variants of these schemes have been a recent topic of interest in the literature.
In the context of stochastic optimization, several trust-region algorithms that explicitly deal with 
computing stochastic estimates of the function values have been described~\cite{blanchet2019convergence,chen2018stochastic,larson2016stochtr}. In the specific case of least-squares problems, both approaches (exact and inexact 
function values) have been incorporated within a 
Levenberg-Marquardt framework~\cite{bergou2018stochlevenberg,bergou2016lmprobamodels}.
The use of stochastic function estimates in a line-search framework (a process that heavily 
relies on evaluating the function at tentative points) has also been the subject of very recent 
investigation. A study based on proprietary data~\cite{kungurtsev2016} considered an inexact 
Newton and negative curvature procedure using each iteration's chosen
mini-batch as the source of function evaluation sample in the line search. A stochastic 
line-search technique was introduced in~\cite{mahsereci2017probalinesearches}, where extensive 
experiments matching performance to pre-tuned SGD were presented.
An innovating 
technique based on a backtracking procedure for steps generated by a limited memory
BFGS method for nonconvex problems using first-order information was recently proposed
in~\cite{bollapragada2018progressive}, and first-order convergence results were derived.
Finally, contemporary to the first version of this paper, a stochastic 
line-search framework was described by~\cite{paquette2018stochls}.
Similarly to our scheme, this algorithm computes stochastic estimates
for function and gradient values, which are then used within a line-search 
algorithm. However, the two methods differ in their inspiration and results: we provide more 
details about these differences in the next paragraph, and throughout the paper when relevant.

In this paper, we propose a line-search scheme with second-order guarantees based on subsampling 
function and derivative evaluations. Our method uses these subsampled values to compute Newton-type and 
negative curvature steps. Although our framework bears similarities with the approach 
of~\cite{paquette2018stochls}, the two algorithms are equipped with different analyzes, each 
based on their own arguments from probability theory. The method 
of~\cite{paquette2018stochls} is designed with first-order guarantees in mind (in particular, the 
use of negative curvature is not explored), and its complexity results are particularized to the 
nonconvex, convex and strongly convex cases; our work presents a line-search method that is 
dedicated to the nonconvex setting, and to the derivation of second-order results.
Earlier work on second-order guarantees for subsampling methods often focused on complexity 
bounds holding with a high probability (of accurate samples being taken at each iteration), 
disallowing poor outlier estimates of the problem function. 
Our results are complementary as we establish a rate of convergence to points satisfying approximate second-order optimality conditions in expectation.
\revised{In addition to theoretical results, we test an implementation of our method on 
contemporary neural network training tasks, and compare it with a 
stochastic gradient approach using the same amount of sampling. Our results indicate that the 
proposed method performs well when these problems involve a large amount of data but a small 
number of parameters. Although the case of a large number of parameters require a careful 
implementation, we provide insights regarding the promises of our second-order technique.
}

We organize this paper as follows. In Section~\ref{sec:algo}, we describe our proposed 
approach based on line-search techniques. In Section~\ref{sec:expdecrease}, we derive 
bounds on the amount of expected decrease that can be achieved at each iteration by our 
proposed approach. Section~\ref{sec:cvwcc} gives  the  global convergence rate of our 
method under appropriate assumptions, followed by a discussion about the required properties and their 
satisfaction in practice. A numerical study of our approach is provided in Section~\ref{sec:numerics}.
A discussion of conclusions and future research is given in Section~\ref{sec:conc}.

Throughout the paper, $\|.\|$ denotes the Euclidean norm. A vector $v \in \real^n$ will be called a unit 
vector if $\|v\|=1$. Finally, $\mathbbm{I}_n$ denotes the identity matrix of size $n$.
%%%%%%%%%%%%%%%%%%%%%%%%%%%%%%%%%%%%%%%%%%%%%%%%%%%%%%%%%%%%%%%%%%%%%%%%%%%%%%%
\section{Subsampling line-search method} \label{sec:algo}
%%%%%%%%%%%%%%%%%%%%%%%%%%%%%%%%%%%%%%%%%%%%%%%%%%%%%%%%%%%%%%%%%%%%%%%%%%%%%%%
In this section, we introduce a line-search algorithm dedicated to solving the 
unconstrained optimization problem~\eqref{eq:prob1}: the detailed framework is 
provided in Algorithm~\ref{alg:alas}. 
At each iteration $k$, our method computes a random sampling set $\mathcal{S}_k$, and the associated estimates $g_k:=g(x_k;\mathcal{S}_k)$ and $H_k:=H(x_k;\mathcal{S}_k)$ of $\nabla f(x_k)$ and $\nabla^2 f(x_k)$, respectively. 
Since we have computed $\mathcal{S}_k$, the model $\hat f_k(\cdot) := \hat f(\cdot;\mathcal{S}_k)$ of the function $f$ is also defined: the estimates $g_k:=g(x_k;\mathcal{S}_k)$ and $H_k:=H(x_k;\mathcal{S}_k)$ define the quadratic Taylor expansion of $\hat f_k$ around $x_k$, which we use to compute a search direction $d_k$.
The form of this direction is set based on the norm of $g_k$ as well as the minimum eigenvalue of $H_k$, denoted by $\lambda_k$. The process is described through Steps 2-5 of Algorithm~\ref{alg:alas}, and involves an optimality tolerance $\epsilon$. When $\lambda_k < -\epsilon$, the Hessian estimate is indefinite, and we choose to use a negative curvature direction, as we know that it will offer sufficient reduction in the value of $\hat f_k$ (see the analysis of Section~\ref{subsec:expdecrease:generalres}). When $\lambda_k \ge \|g_j\|^{1/2}$, the quadratic function defined by $g_k$ and $H_k$ is (sufficiently) positive definite, and this allows us to compute and use a Newton direction. Finally, when $\lambda_k \in \left[-\epsilon^{1/2},\|g_k\|^{1/2}\right]$, we regularize this quadratic by an amount of order $\epsilon^{1/2}$, so that we fall back into the previous case: we thus compute a regularized Newton direction, for which we will obtain desirable decrease properties.
Once the search direction has been determined, and regardless of its type, a backtracking line-search strategy is applied to select a step size $\alpha_k$ that decreases the model $\hat f_k$ by a sufficient 
amount~(see condition \eqref{eq:suff:cond3}). This condition is instrumental in obtaining good complexity 
guarantees.

Aside from the use of subsampling, Algorithm~\ref{alg:alas} differs from the original method of~\cite{royer2018complexity} in two major ways. First, we only consider three types of search direction, as opposed to five in the original method of~\cite{royer2018complexity}. Indeed, in order to simplify 
the upcoming theoretical analysis, we do not allow for selecting gradient-based steps, i.e. steps that 
are colinear with the negative (subsampled) gradient. Note that such steps do not affect the 
complexity guarantees, but have a practical value since they are cheaper to compute. For this reason, we 
present the algorithm without their use, but we will re-introduce them in our numerical experiments.
\revised{Secondly, our strategy for choosing among the three different forms for the direction differs
slightly from~\cite{royer2018complexity} in the ``if'' condition in Step 4 of the algorithm and in the regularization used for the regularized Newton, see equation~\eqref{eq:regnewton} . In Section~\ref{sec:cvwcc}, we will show that such algorithmic modifications will lead also to  results that are equivalent to those of the original deterministic method~\cite{royer2018complexity}.} Nevertheless, and for the reasons already mentioned in the first point, we will also revert to the original rule in our practical implementation (see Section~\ref{subsec:implement}).

\begin{algorithm}[h!]
	\SetAlgoLined
	\DontPrintSemicolon 
	\BlankLine
	\textbf{Initialization}: Choose $x_0 \in \real^n$, $\theta \in (0,1), \eta > 0$, $\epsilon>0$.\;
	\For{$k=0,1,...$}{
		\begin{enumerate}
			\item Draw a random sample set $\mathcal{S}_k \subset \{1,\dots,N\}$, and compute the 
			associated quantities $g_k:=g(x_k;\mathcal{S}_k), H_k:=H(x_k;\mathcal{S}_k)$. Form the 
			estimation $\hat f_k$ as a function of the variable $s$:
			\begin{equation} \label{eq:modelsamples}
				\hat f_k(x_k+s) := \hat f(x_k+s;\mathcal{S}_k).
				\vspace{-3ex}
			\end{equation}
			\item Compute $\lambda_k$ as the minimum eigenvalue of the Hessian estimate $H_k$.\\ 
			If $\lambda_k \ge -\ecurv$ and $\|g_k\|=0$ set $\alpha_k=0,\ d_k=0$ and go to Step 7.
			\item If $\lambda_k < -\ecurv$, 
%			\item If \textcolor{magenta}{$\lambda_k < -\epsilon^{1/2}$}, 
%			\item If $\lambda_k < -\|g_k\|^{1/2}$, 
                              compute a negative eigenvector $v_k$ such that
			\begin{equation} \label{eq:negcurv}
				H_k v_k = \lambda_k v_k,\ \|v_k\| = -\lambda_k,\ v_k^\top g_k \le 0,
				\vspace{-3ex}
			\end{equation}
			set $d_k=v_k$ and go to the line-search step.
             \item If $\lambda_k>\|g_k\|^{1/2}$, compute a Newton direction $d_k$ solution of
			\begin{equation} \label{eq:newton}
				H_k d = -g_k,
				\vspace{-3ex}
			\end{equation}
			go to the line-search step.
			\item If $d_k$ has not yet been chosen, compute it as a regularized Newton direction, 
			solution of
			\begin{equation} \label{eq:regnewton}
				\left(H_k+(\|g_k\|^{1/2}+\ecurv) \mathbbm{I}_n\right)d_k = -g_k,
%				\left(H_k+(\|g_k\|^{1/2}+\epsilon^{1/2}) \mathbbm{I}_n\right)d_k = -g_k,
\vspace{-3ex}
			\end{equation}
			and go to the line-search step.
			\item \textbf{Line-search step} Compute the minimum index $j_k$ such that the 
			step length\\ $\alpha_k:=\theta^{j_k}$ satisfies the decrease condition:
			\begin{equation} \label{eq:suff:cond3}
				\hat f_k(x_k+\alpha_k d_k) - \hat f_k(x_k) \; \le \; -\frac{\eta}{6}\alpha_k^3\|d_k\|^3.
				\vspace{-3ex}
			\end{equation}
			\item Set $x_{k+1}=x_k+\alpha_k d_k$.
			\item Set $k=k+1$.
		\end{enumerate}
	} 
\caption{A Line-search Algorithm based on Subsampling (ALAS). \label{alg:alas}}
\end{algorithm}

Three comments about the description of Algorithm~\ref{alg:alas} are in order. First, we observe that the 
method as stated is not equipped with a stopping criterion. Apart from budget considerations, one might 
be tempted to stop the method when the derivatives are suggesting that it is a second-order stationary 
point. However, since we only have access to subsampled versions of those derivatives, it is possible 
that we have not reached a stationary point for the true function. As we will establish later in the 
paper, one needs to take into account the accuracy of the model, and it might take several iterations to 
guarantee that we are indeed  at a stationary point. We discuss the link between stopping criteria and 
stationarity conditions in Section~\ref{sec:cvwcc}.

Our second remark relates to the computation of a step. When the subsampled gradient $g_k$ is zero and 
the subsampled matrix $H_k$ is positive definite, we cannot compute a descent step using first- or 
second-order information, because the current iterate is second-order stationary for the subsampled model. 
In that situation, and for the reasons mentioned in the previous paragraph, we do not stop our method, but rather take a zero 
step and move on to a new iteration and a new sample set. After a certain number of such 
iterations, one can guarantee that a stationary point has been reached with 
high probability (see Section~\ref{sec:cvwcc}). 

\revised{
The third remark is about the size of the random sample that we did not specify. The algorithm supports adaptive sample size. For the sake  of simplicity and clarity, in our theoretical analysis (see Theorem 1)  we focus on using a lower bound which is independent from $k$. This bound on the sample size depends on $\epsilon$ but is limited by the full sample size (see Condition  (\ref{eq:samplesizebounddec})).
%, especially in the case of accurate models $(p \sim 1)$. 
One may derive a sharper lower bound on the sample size, but such a bound will 
involve random, iteration-dependent quantities, which introduces a number of 
measurability issues. We thus chose to focus on constant bounds on the sample 
size.
%On could use a larger value than the prescribed sampling rate, which would be on par 
%with the theoretical requirements. Besides, our analysis could actually be refined to provide 
%iteration-dependent bounds on $\pi_k$ (see the proof of Theorem 1), which would in turn be more 
%precise
 }
%%%%%%%%%%%%%%%%%%%%%%%%%%%%%%%%%%%%%%%%%%%%%%%%%%%%%%%%%%%%%%%%%%%%%%%%%%%%%%%
\section{Expected decrease guarantees with subsampling} 
\label{sec:expdecrease}
%%%%%%%%%%%%%%%%%%%%%%%%%%%%%%%%%%%%%%%%%%%%%%%%%%%%%%%%%%%%%%%%%%%%%%%%%%%%%%%
In this section, we derive bounds on the amount of expected decrease at each iteration. When the 
current sample leads to good approximations of the objective and derivatives, we are able to guarantee 
decrease in the function for any possible step taken by Algorithm~\ref{alg:alas}. By controlling the 
sample size, one can adjust the probability of having a sufficiently good model, so that the guaranteed 
decrease for good approximations will compensate a possible increase for bad approximations on average.
%%%%%%%%%%%%%%%%%%%%%%%%%%%%%%%%%%%%%%%%%%%%%%%%%%%%%%%%%%%%%%%%%%%%%%%%%%%%%%%
\subsection{Preliminary assumptions and definitions} 
\label{subsec:expdecrease:assum}
Throughout the paper, we will study Algorithm~\ref{alg:alas} under the following assumptions.

\begin{assumption}\label{as:f:lower}
The function $f$ is bounded below by $f_{\low} \in \real$.
\end{assumption}

\begin{assumption}\label{as:f:H}
The functions $f_i$ are twice continuously differentiable, with Lipschitz continuous 
gradients and Hessians, of respective Lipschitz constants $L_{i}$ and $L_{H,i}$.
\end{assumption}
A consequence of Assumption~\ref{as:f:H} is that $f$ is twice continuously differentiable, Lipschitz, 
with Lipschitz continuous first and second-order derivatives. This property also holds for 
$m(\cdot;\mathcal{S})$, regardless of the value of $\mathcal{S}$ (we say that the property holds 
\emph{for all realizations of $\mathcal{S}$}). In what follows, we will always consider that 
$m(\cdot;\mathcal{S})$ and $f$ have $L$-Lipschitz continuous 
gradients and $L_H$-Lipschitz continuous Hessians, where
$L:=\max_i L_i$ and $L_H:=\max_i L_{H,i}$.
%\begin{assumption} \label{as:iterates}
%	There exists a compact set such that for any realization of the algorithm, the iterates are contained 
%	in this compact set.
%\end{assumption}
%By Assumption~\ref{as:iterates}, for the sequence of iterates $\{x_k\}$ generated by Algorithm~\ref{alg:alas}, 
%there exists a finite positive number $U_g$ such that
%As a result, there exists a constant $U_g$ such that,
%\[
%	U_g \ge \max_{i=1,\dots,N} \|\nabla f_i(x_k)\| \quad \forall k,
%\]
Another corollary of Assumption~\ref{as:f:H} is that there exists a finite 
positive constant $U_H$ such that
$
U_H \ge \max_{i=1,\dots,N} \|\nabla^2 f_i(x_k)\|
$
for every $k$.
The value $L=U_H$ is a valid one, however we make the distinction between the two by analogy 
with previous works~\cite{royer2018complexity,xu2019newton}.
In the same spirit, we make the following additional assumption.
\begin{assumption}\label{as:finiteSGvar}
There exists a finite positive constant $U_g$ such that,
\[
	U_g \ge \max_{i=1,\dots,N} \|\nabla f_i(x_k)\| \quad \forall k,
\]
for all realizations of the algorithm.
\end{assumption}

\revised{
Assumption~\ref{as:finiteSGvar} guarantees that every trial step will be bounded in norm, and that the 
possible increase of $f$ produced by this step will also be bounded. Note that such assumption is less restrictive than assuming that all $f_i$'s are 
Lipschitz continuous, or that there exists a compact set that contains the sequence of iterates.
%Alternatively, we could consider that the domain of the $f_i$'s is a bounded, compact and convex set 
%and Assumption~\ref{as:finiteSGvar} will follow. However, this would make the algorithm and the 
%analysis significantly more technical. 
The satisfaction of such assumption can be ensured in practice by restarting the method whenever the algorithm detects unboundedness of the iterates, which never occurred in our numerical experiments.
Besides, because our sampling set is finite, the set of possibilities for each iterate is bounded (though it grows at a combinatorial pace). 
}

In the rest of the analysis, for every iteration $k$, we let 
%\begin{center}
$\skn_k := \tfrac{|\mathcal{S}_k|}{N}$ 
%\end{center}
denote the \emph{sample fraction} used at every iteration. Our objective is to identify threshold 
values on $\skn_k$ that lead to (expected) decrease in the objective.
%conditions 
%on this fraction that allow for decrease in expectation. Whereas in general we expect $\skn_k\equiv \skn$, i.e.,
%the same batch-size is taken at every iteration, we allow it to be indexed by the iteration number
%for additional generality. 

Whenever the sample sets in Algorithm~\ref{alg:alas} are drawn at random, the subsampling process
introduces randomness in an iterative fashion at every iteration. As a result, Algorithm~\ref{alg:alas} 
results in a stochastic process $\{x_k,d_k, \alpha_k, g_k, H_k, \hat f_k(x_k), \hat f_k(x_k+\alpha_k d_k) \}$ (we 
point out that the sample fractions need not be random). To lighten the notation throughout the paper, 
we will use these notations for the random variables and their realizations. Most of our analysis will be 
concerned with random variables, but we will explicitly mention that realizations are considered when 
needed.
Our goal is to show that under certain conditions on the sequences $\{g_k\}$, $\{H_k\}$, $\{\hat f_k(x_k)\}$, 
$\{\hat f_k(x_k+\alpha_k d_k)\}$ the resulting stochastic process has desirable convergence properties in 
expectation.

Inspired by a number of definitions in the model-based literature for stochastic or subsampled 
methods~\cite{bandeira2014trproba,chen2018stochastic,larson2016stochtr,liu2017noisy}, we introduce a 
notion of sufficient accuracy for our model function and its derivatives.

\begin{definition}\label{def:accurate}
Given a realization of Algorithm~\ref{alg:alas} and an iteration index $k$, the model 
$\hat f_k: \real^n \mapsto \real$ is said to be \emph{$(\delta_f,\delta_g, \delta_H)$-accurate} 
with respect to $(f,x_k,\alpha_k,d_k)$ when
\begin{equation} \label{eq:accurate:funcvals}
	|f(x_k)-\hat f_k(x_k)|\le \delta_f\quad \text{and} \quad |f(x_{k}+\alpha_k d_k) -\hat f_k(x_{k}+\alpha_k d_k)|\le \delta_f,
\end{equation}
\begin{equation} \label{eq:accurate:gradient}
	\|\nabla f(x_k)-\nabla \hat f_k(x_k) \|\le \delta_g \quad \text{and} \quad 
	\|\nabla f(x_{k}+\alpha_k d_k) - \nabla \hat f_k (x_{k}+\alpha_k d_k)\|\le \delta_g,
\end{equation}
\begin{equation} \label{eq:accurate:hessian}
\|\nabla^2 f(x_k)-\nabla^2 \hat f_k(x_k) \|\le \delta_H,
\end{equation} 
where %$g_k := \nabla \hat f_k(x_k)$, $H_k := \nabla^2 \hat f_k(x_k)$ and 
$\delta_f,\delta_g,\delta_H$ are 
nonnegative constants.
\end{definition}

Condition~\eqref{eq:accurate:funcvals} is instrumental in establishing decrease guarantees for 
our method, while conditions~\eqref{eq:accurate:gradient} and~\eqref{eq:accurate:hessian} 
play a key role in defining proper notions of stationarity (see Section~\ref{sec:cvwcc}). 
Since we are operating with a sequence of random samples and models, we need a probabilistic equivalent 
of Definition~\ref{def:accurate}, which is given below.

\begin{definition}\label{def:accurate:proba}
Let $p \in (0,1]$, $\delta_f \ge 0$, $\delta_g \ge 0$ and $\delta_H \ge 0$. A sequence of functions 
$\{\hat f_k\}_k$ is called $p$-probabilistically  $(\delta_f,\delta_g, \delta_H)$-accurate for 
Algorithm~\ref{alg:alas} if the events 
$$
	I_k := \left\{\mbox{$\hat f_k$ is $(\delta_f,\delta_g, \delta_H)$-accurate with respect to 
	$(f,x_k,\alpha_k,d_k)$}\right\} \text{ satisfy  }
p_k := \Pr\left(I_k | \mathcal{F}_{k-1}\right) \ge p,
$$
where $\mathcal{F}_{k-1}$ is the $\sigma$-algebra generated by the sample sets 
$\mathcal{S}_0,\mathcal{S}_1, \ldots,\mathcal{S}_{k-1}$, and we define \\
$\Pr(I_0 | \mathcal{F}_{-1}) := \Pr(I_0)$.
\end{definition}

Observe that if we sample the full data at every iteration (that is, $\mathcal{S}_k=\{1,\dots,N\}$ for 
all $k$), the resulting model sequence satisfies the above definition for any $p \in [0,1]$ and any 
positive values $\delta_f,\delta_g,\delta_H$. Given our choice of model~\eqref{eq:subsampmodel}, the 
accuracy properties are directly related to the random sampling set, but we will follow the existing 
literature on stochastic optimization and talk about accuracy of the models. We will however express 
conditions for good convergence behavior based on the sample size rather than the probability of 
accuracy.

In the rest of the paper, we assume that the estimate functions of the problem form a probabilistically 
accurate sequence as follows.

\begin{assumption}\label{as:accurate:in:prob} 
The sequence $\{\hat f_k\}_k$ produced by Algorithm \ref{alg:alas} is $p$-probabilistically 
$\delta$-accurate, with $\delta:=(\delta_f,\delta_g, \delta_H)$ and $p \in (0,1]$. 
%Furthermore this accuracy is conditionally independent for each $k$, i.e., for any event $F\in\mathcal{F}_{k-1}$,
%$\mathbb{P}\left[I_k\ |\ \mathcal{F}_{k-1}\right] = \mathbb{P}\left[I_k\ |\ \mathcal{F}_{k-1}\cap F\right] = p$.
\end{assumption}
%
%\crnote{I think the last part of the assumption may not be needed. Because 
%$\mathcal{F}_{k-1} \cap F \subset \mathcal{F}_{k-1}$, we can apply the result 
%from~\cite[Theorem 5.1.6]{durrett2010probatheory}, which states that for any random variable $X$ (not 
%necessarily belonging to one of the $\sigma$-algebras, we have 
%\[
%	\E{X |\mathcal{F}_{k-1} \cap F } = \E{\E{X|\mathcal{F}_{k-1}} | \mathcal{F}_{k-1} \cap F}
%\]
%Thus, denoting by $\mathbf{1}(A)$ the indicator variable of the random event $A$, we have
%\begin{eqnarray*}
%	\Pr\left( A | \mathcal{F}_{k-1} \cap F \right) &= &\E{ \mathbf{1}(A) |\mathcal{F}_{k-1} \cap F } \\
%	&= &\E{\E{\mathbf{1}(A) |\mathcal{F}_{k-1}} | \mathcal{F}_{k-1} \cap F} \\
%	&= &\E{ \Pr(A | \mathcal{F}_{k-1}) | \mathcal{F}_{k-1} \cap F} \\
%	&\ge &\E{ p | \mathcal{F}_{k-1} \cap F} =p.
%\end{eqnarray*}
%It seems to be valid, but maybe I am overlooking something.}

We now introduce the two notions of stationarity that will be considered in our analysis.

\begin{definition} \label{def:stationary}
Consider a realization of Algorithm~\ref{alg:alas}, and let $\epsilon_g,\epsilon_H$ be 
two positive tolerances. We say that the $k$-th iterate $x_k$ is 
$(\epsilon_g,\epsilon_H)$-\emph{model stationary} if
	\begin{equation} \label{eq:accuratenonstatiomodel}
		\min\left\{ \|g_k\|,\|g(x_{k+1},\mathcal{S}_k)\| \right\} \le \epsilon_g \quad 
		\mbox{and} \quad \lambda_k \ge -\epsilon_H.
	\end{equation}
Similarly, we will say that $x_k$ is $(\epsilon_g,\epsilon_H)$-\emph{function 
stationary} if 
	\begin{equation} \label{eq:accuratenonstatiocondf}
		\min\left\{\|\nabla f(x_k)\|,\|\nabla f(x_{k+1})\|\right\} \le \epsilon_g \quad 
		\mathrm{or} \quad \lambda_{\min}(\nabla^2 f(x_k)) \ge -\epsilon_H.
	\end{equation}
\end{definition}

Note that the two definitions above are equivalent whenever the model consists of the full function, 
i.e. when $\mathcal{S}_k=\{1,\dots,n\}$ for all $k$. 
We also observe that the definition of model stationarity involves the norm of the vector
\begin{equation} \label{eq:nextgraditk}
	g_k^+ \; := g(x_k+\alpha_k d_k ; \mathcal{S}_k).
\end{equation}
The norm of this ``next gradient" is a major tool for the derivation of complexity results in 
Newton-type methods~\cite{cartis2011arccomplexity,royer2018complexity}. In a subsampled setting, a 
distinction between $g_k^+$ and $g_{k+1}$ is necessary because these two vectors are 
computed using different samples.

Our objective is to guarantee convergence towards a point satisfying a function stationarity 
property~\eqref{eq:accuratenonstatiocondf}, yet we will only have control on achieving model 
stationarity. The accuracy of the models will be instrumental in relating the two properties, 
as shown by the lemma below.

\begin{lemma} \label{lemma:modeltruestationarity}
Let Assumptions \ref{as:f:lower} and~\ref{as:f:H} hold. Consider a realization of the method 
that reaches an iterate $x_k$ such that $x_k$ is $(\epsg,\epsH)$-model stationary.
%\begin{equation*}
%	\|g_k\| \le \epsilon  \quad \mathrm{and} \quad \lambda_k \ge -\epsilon^{1/2},
%\end{equation*}
Suppose further that the model $\hat f_k$ is $(\delta_f,\delta_g,\delta_H)$-accurate with 
\begin{equation} \label{eq:conddeltagdeltah}
	\delta_g \le \ccg\epsg \quad \mbox{and} \quad \delta_H \le \cch\epsH
\end{equation}
where $\ccg$ and $\cch$ are positive, deterministic constants independent of $k$. Then, $x_k$ is a\\ 
$((1+\ccg)\epsg,(1+\cch)\epsH)$-function stationary point.
\end{lemma}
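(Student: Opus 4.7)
The plan is to derive function stationarity directly from model stationarity through two applications of a perturbation argument: a triangle inequality for the gradient conditions, and a Weyl-type spectral perturbation bound for the minimum eigenvalue of the Hessian. Both tools rely only on the $(\delta_f,\delta_g,\delta_H)$-accuracy of $m_k$ together with the two hypotheses $\delta_g \le \ccg\epsg$ and $\delta_H \le \cch\epsH$. The role of the condition on $\delta_f$ is not needed here, since only derivative information enters into the stationarity definitions.

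First I would handle the gradient side. Model stationarity gives $\min\{\|g_k\|,\|g_k^+\|\}\le\epsg$, where $g_k^+ = g(x_{k+1};\mathcal{S}_k)$ is the ``next gradient'' defined in~\eqref{eq:nextgraditk}. I would split into two cases according to which of the two quantities attains the minimum. In the case $\|g_k\|\le\epsg$, the accuracy bound $\|\nabla f(x_k)-g_k\|\le\delta_g$ from~\eqref{eq:accurate:gradient} yields, via the triangle inequality, $\|\nabla f(x_k)\| \le \|g_k\| + \delta_g \le \epsg + \ccg\epsg = (1+\ccg)\epsg$. Symmetrically, in the case $\|g_k^+\|\le\epsg$, the second half of~\eqref{eq:accurate:gradient} gives $\|\nabla f(x_{k+1})\|\le(1+\ccg)\epsg$. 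Either way, $\min\{\|\nabla f(x_k)\|,\|\nabla f(x_{k+1})\|\}\le(1+\ccg)\epsg$.

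For the Hessian side I would apply the standard spectral-norm perturbation bound for eigenvalues of symmetric matrices: for any symmetric $A,B$, $\lambda_{\min}(A) \ge \lambda_{\min}(B) - \|A-B\|$. Taking $A=\nabla^2 f(x_k)$ and $B=H_k$, combined with~\eqref{eq:accurate:hessian} and the bound $\delta_H\le\cch\epsH$, the model stationarity condition $\lambda_k\ge-\epsH$ yields $\lambda_{\min}(\nabla^2 f(x_k)) \ge \lambda_k - \delta_H \ge -\epsH - \cch\epsH = -(1+\cch)\epsH$.

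The two derived inequalities together immediately match (in fact, jointly imply the conjunction of) the two clauses in~\eqref{eq:accuratenonstatiocondf} at tolerances $(1+\ccg)\epsg$ and $(1+\cch)\epsH$, which concludes the proof. There is essentially no genuine obstacle here: the lemma is a bookkeeping result that transfers perturbation bounds through the stationarity definitions. The only minor care needed is to respect the $\min$ structure on both sides, making sure that the index ($k$ or $k+1$) chosen on the sampled-gradient side is the one carried over to the true-gradient side.
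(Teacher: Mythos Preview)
Your proof is correct and follows essentially the same route as the paper: a triangle-inequality case split for the gradient bound and a spectral perturbation bound for the Hessian eigenvalue. The only cosmetic difference is that the paper writes out the Rayleigh-quotient argument (using a unit eigenvector of $\nabla^2 f(x_k)$) rather than citing Weyl's inequality as a black box, but the substance is identical.
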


\proof{Proof of Lemma~\ref{lemma:modeltruestationarity}.}
Let $x_k$ be an iterate such that $
	\min\{\|g_k\|,\|g_k^+\|\} \le \epsg  \quad \mathrm{and} \quad \lambda_k \ge -\epsH$.
Looking at the first property, suppose that $\|g_k\| \le \epsg$. In that case, we have:
\begin{equation*}
	\|\nabla f(x_k)\|  \le \|\nabla f(x_k) - g_k\| + \| g_k\|  
	\le  \delta_g +  \epsg  \le (\ccg+1) \epsg.
\end{equation*}
A similar reasoning shows that if $\|g_k^+\| \le \epsg$, we obtain $\|\nabla f(x_{k+1})\| \le 
 (\ccg+1) \epsg$; thus, we must have 
\[
	\min\left\{ \|\nabla f(x_k)\|,\|\nabla f(x_{k+1})\|\right\} \le (1+\ccg)\epsg.
\]
Consider now a unit eigenvector $v$ for $\nabla^2f(x_k)$ associated with 
$\lambda_{\min}(\nabla^2 f(x_k))$, one has
\[
\lambda_k - \lambda_{\min}(\nabla^2 f(x_k)) \le v^\top H_k v - v^\top \nabla^2 f(x_k) v
\le \|H_k - \nabla^2 f(x_k)\| \|v\|^2 \le \delta_H.
\]
%\begin{align*}
%	\lambda_k - \lambda_{\min}(\nabla^2 f(x_k)) &\le v^\top H_k v - v^\top \nabla^2 f(x_k) v \\
%	&\le \|H_k - \nabla^2 f(x_k)\| \|v\|^2 \\
%	&\le \delta_H.
%\end{align*}
Hence, by using \eqref{eq:conddeltagdeltah}, one gets
\begin{equation*}
	\lambda_{\min}(\nabla^2 f(x_k))  = \left( \lambda_{\min}(\nabla^2 f(x_k)) - \lambda_k\right)  
	+ \lambda_k  \ge  -\delta_H - \epsH \ge -(\cch+1) \epsH.
\end{equation*}
Overall, we have shown that
\[
	\min\left\{\|\nabla f(x_k)\|,\|\nabla f(x_{k+1})\|\right\} \le (1+\ccg)\epsg \quad 
	\mbox{and} \quad \lambda_{\min}(\nabla^2 f(x_k))  \ge -(1+\cch)\epsH,
\]
and thus $x_k$ is also a $((1+\ccg)\epsg,(1+\cch)\epsH)$-function stationary point.
\endproof

The reciprocal result of Lemma~\ref{lemma:modeltruestationarity}, which can be proven in ac 
similar fashion will also be of interest to us.
%The decrease guarantees that will be derived in Section~\ref{subsec:expdecrease:generalres} 
%will rely on assuming that the current iterate is not model-stationary,
\begin{lemma} \label{lm:truemodelnonstationary}
	Consider a realization of Algorithm~\ref{alg:alas} and the associated $k$-th iteration. 
	Suppose that $x_k$ is \emph{not} $\left((1+\ccg)\epsg,(1+\cch)\epsH\right)$-function 
	stationary, and that the model $\hat f_k$ is $\delta=(\delta_f,\delta_g,\delta_H)$-accurate 
	with $\delta_g \le \ccg \epsg$ and $\delta_H \le \cch \epsH$ where $\ccg$ and $\cch$ are positive constants. Then, $x_k$ is \emph{not} $(\epsg,\epsH)$-model stationary.
\end{lemma}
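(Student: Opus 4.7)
The plan is to observe that this statement is the logical contrapositive of Lemma~\ref{lemma:modeltruestationarity}, so the most economical argument is a one-line proof by contradiction. If $x_k$ were $(\epsilon_g,\epsilon_H)$-model stationary, the accuracy hypotheses $\delta_g \le \kappa_g \epsilon_g$ and $\delta_H \le \kappa_H \epsilon_H$ are precisely those required to invoke Lemma~\ref{lemma:modeltruestationarity}, which would then deliver the function-stationarity property at tolerances $((1+\kappa_g)\epsilon_g,(1+\kappa_H)\epsilon_H)$, contradicting the hypothesis of the present lemma.

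Should a direct derivation be preferred, I would simply redo the triangle-inequality estimates of Lemma~\ref{lemma:modeltruestationarity} in the reverse direction. Expanding the negation of~\eqref{eq:accuratenonstatiocondf} at tolerances $(1+\kappa_g)\epsilon_g$ and $(1+\kappa_H)\epsilon_H$ produces the pair of strict inequalities
\[
	\min\{\|\nabla f(x_k)\|,\|\nabla f(x_{k+1})\|\} > (1+\kappa_g)\epsilon_g
	\quad\text{and}\quad
	\lambda_{\min}(\nabla^2 f(x_k)) < -(1+\kappa_H)\epsilon_H.
\]
Combining the first inequality with $\|\nabla f(x_k)-g_k\|\le \delta_g \le \kappa_g\epsilon_g$ and the analogous bound at the trial point $x_k+\alpha_k d_k$, via the reverse triangle inequality, yields $\min\{\|g_k\|,\|g_k^+\|\} > \epsilon_g$. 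Combining the second inequality with $\|H_k - \nabla^2 f(x_k)\|\le \delta_H \le \kappa_H\epsilon_H$, using the same eigenvector/Weyl argument as in Lemma~\ref{lemma:modeltruestationarity}, gives $\lambda_k < -\epsilon_H$. Either bound alone suffices to falsify~\eqref{eq:accuratenonstatiomodel}, hence $x_k$ cannot be $(\epsilon_g,\epsilon_H)$-model stationary.

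I do not anticipate any genuine obstacle; the only point that deserves care is the interplay between the ``and'' conjunction in the model-stationarity definition and the ``or'' disjunction in the function-stationarity definition. The negation of the latter is a conjunction of two strict inequalities, which is exactly the strong information needed to drive both triangle-inequality estimates through and land in the ``or'' (in fact ``and'') negation of the model-stationarity condition.
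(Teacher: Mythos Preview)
Your proposal is correct and matches the paper's own treatment: the paper states Lemma~\ref{lm:truemodelnonstationary} without proof, introducing it only as ``the reciprocal result of Lemma~\ref{lemma:modeltruestationarity}'', i.e.\ precisely the contrapositive you identify. Your additional direct argument via the reverse triangle inequality and the Weyl-type eigenvalue bound is sound and goes beyond what the paper supplies; your observation about the and/or interplay in the two stationarity definitions is exactly the right point to flag.
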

%%%%%%%%%%%%%%%%%%%%%%%%%%%%%%%%%%%%%%%%%%%%%%%%%%%%%%%%%%%%%%%%%%%%%%%%%%%%%%%
\subsection{A general expected decrease result} 
\label{subsec:expdecrease:generalres}
In this section, we study the guarantees that can be obtained 
(in expectation) for the various types of direction considered by our method. 
By doing so, we identify the necessary requirements on our sampling 
procedure, as well as on our accuracy threshold for the model values.

In what follows, we will make use of the following constants:
\begin{equation*}
c_{nc} := \frac{3\theta}{L_H+\eta},~c_{n} :=  \min\left\{\left[\frac{2}{L_H}\right]^{1/2},\left[\frac{3\theta}{L_H+\eta}\right]\right\},~ c_{rn} := \min\left\{\frac{1}{1+\sqrt{1+L_H/2}},\left[\frac{6\theta}{L_H+\eta}\right]\right\}, 
\end{equation*}
\begin{equation*}
\bar{j}_{nc}:= \left[\log_{\theta} \left( \frac{3}{L_H+\eta}\right) \right]_{+},~
\bar{j}_{n}:= \left[\log_{\theta} \left( \sqrt{\frac{3}{L_H+\eta}} 
\frac{\epsilon^{1/2}}{\sqrt{U_g}}\right) \right]_{+},~\bar{j}_{rn}:= 
\left[\log_{\theta} \left( \frac{6}{L_H+\eta}\frac{\epsilon}{U_g}\right)\right]_{+},
\end{equation*}
where $\epsilon$ is the tolerance used in Algorithm~\ref{alg:alas}.
Those constants are related to the line-search steps that can be performed at 
every iteration of Algorithm~\ref{alg:alas}. As long as the current iterate is not an approximate 
stationary point of the model, we can bound the number of such steps independently of $k$. To 
formalize this property, we introduce the following events for any $k \in \mathbb{N}$:
\revised{
\begin{equation}\label{eq:nonmodelstatevents}
E^1_k:=\left\{\|g_k\| > \epsilon\right\},\quad E^+_k:=\left\{\|g^+_k\| > \epsilon\right\}, \quad E^2_k:=\left\{\lambda_k < -\epsilon^{1/2} \right\}.
\end{equation}
We also define
\begin{equation}\label{eq:nonfuncstatevents}
\mathcal{E}^1_k:=\left\{\|\nabla f(x_k)\| > (1+\ccg)\epsilon\right\}  ~ \mbox{and}~ \mathcal{E}^2_k:=\left\{\lambda_{\min}(\nabla^2 f(x_k)) < -(1+\cch)\epsilon^{1/2} \right\}.
\end{equation}
By Lemma~\ref{lemma:modeltruestationarity}
i.e., under the event $I_k$, it holds that $E^1_k\cap E^2_k$ imply $\mathcal{E}^1_k\cap\mathcal{E}^2_k$.
}

%\begin{equation}\label{eq:nonstationary}
%E_k:=\left\{\ \min\{\|g_k\|,\|g^+_k\|\} > \epsilon  \quad 
%		\mbox{or} \quad \lambda_k < -\epsilon^{1/2} \right\}.
%\end{equation}
\revised{Assuming event $(E^1_k\cap E^+_{k}) \cup E^2_k$ occurs, we can provide a lower bound on the step returned by the 
line-search process. This is the purpose of the following lemma. Its proof mainly relies on arguments from the deterministic case~\cite{royer2018complexity} and is provided in the appendix.}
%\crnote{The proof of Lemma~\ref{lm:bound:alpha_kd_k} relies on the point not being 
%stationary, and in particular the "next gradient" to be nonzero. It is easy to 
%manipulate this assumption in a deterministic setting, however things become a lot 
%more tedious in a probabilistic setting. In the extreme case, the next gradient is 
%zero and we cannot guarantee that the number of line-search iterations will be 
%bounded (but then the next iterate is either stationary or a saddle point, and we 
%get guarantees for the next iteration: this is the fix we use in the deterministic 
%case).}

\begin{lemma} \label{lm:bound:alpha_kd_k}
	Let Assumptions \ref{as:f:lower} and \ref{as:f:H} hold for a 
	realization of Algorithm~\ref{alg:alas}. Consider an iteration $k$ such 
	that \revised{$(E^1_k \cap E^+_{k}) \cup E^2_k$ occurs.} 
        Then, the backtracking line search terminates 
	with the step length $\alpha_k =\theta^{j_k}$, with 
	$j_k \le \bar{j} +1$ and
	\begin{equation} \label{bound:alpha_kd_k}
		\alpha_k \|d_k\| \; \ge \;  c \;\epsilon^{1/2},
	\end{equation}
	where $c := \min\{c_{nc},c_n,c_{rn}\}$ and 
	$\bar{j}  := \max\{\bar{j} _{nc},\bar{j} _n,\bar{j}_{rn}\}$.
%	\begin{equation} \label{eq:constant:c}
%		c := \min\{c_{nc},c_n,c_{rn}\}
%	\end{equation}
%	and 
%	\begin{equation} \label{eq:constant:barj}
%		\bar{j}  := \max\{\bar{j} _{nc},\bar{j} _n,\bar{j}_{rn}\}.
%	\end{equation}
\end{lemma}

\revised{Since we are using estimates of the true function and its derivatives, the decrease guaranteed by Lemma~\ref{lm:bound:alpha_kd_k} may not reflect on the true function values. 
To address this issue, we provide below a deterministic upper bound on the 
norm of any step computed by our method. The proof is again to be found in the appendix.}
%We now introduce a Lemma to bound, in the worst case, the size of $\|d_k\|$.

\begin{lemma}\label{lem:bound_on_dk}
Let Assumption~\ref{as:f:H} hold for a realization of 
Algorithm~\ref{alg:alas}. Then, for any index $k$,
\begin{equation}\label{eq:boundonde}
\|d_k\|\le \max\{U_H,U_g^{1/2}\}.
%\|d_k\|\le \max\{U_H,U_g^{1/2}, \epsilon^{-1/2} U_g\}.
\end{equation}
\end{lemma}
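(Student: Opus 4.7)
The plan is to proceed by a case analysis over the three possible forms of the search direction $d_k$ (plus the trivial $d_k = 0$ case), using in each case the formula defining $d_k$ together with uniform bounds on $\|H_k\|$ and $\|g_k\|$. As a preliminary observation, by the triangle inequality applied to the averages in \eqref{eq:subsampmodel}, Assumption~\ref{as:f:H} together with the definition of $U_H$ gives $\|H_k\| \le U_H$, and Assumption~\ref{as:finiteSGvar} gives $\|g_k\| \le U_g$ for every realization of the algorithm and every $k$. The zero-step case of Step~2 is immediate.

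For the negative curvature step (Step 3), the defining condition \eqref{eq:negcurv} yields $\|d_k\| = \|v_k\| = -\lambda_k = |\lambda_k|$. Since $\lambda_k$ is an eigenvalue of $H_k$, we have $|\lambda_k| \le \|H_k\| \le U_H$, so $\|d_k\| \le U_H$.

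For the Newton step (Step 4), the regime $\lambda_k > \|g_k\|^{1/2}$ ensures in particular that $H_k$ is positive definite with smallest eigenvalue $\lambda_k > 0$, hence invertible with $\|H_k^{-1}\| \le 1/\lambda_k$. From \eqref{eq:newton} we then obtain
\[
    \|d_k\| \;=\; \|H_k^{-1} g_k\| \;\le\; \frac{\|g_k\|}{\lambda_k} \;<\; \frac{\|g_k\|}{\|g_k\|^{1/2}} \;=\; \|g_k\|^{1/2} \;\le\; U_g^{1/2}.
\]
For the regularized Newton step (Step 5), we are in the remaining regime $-\epsilon^{1/2} \le \lambda_k \le \|g_k\|^{1/2}$. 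The regularized matrix in \eqref{eq:regnewton} has smallest eigenvalue $\lambda_k + \|g_k\|^{1/2} + \epsilon^{1/2} \ge \|g_k\|^{1/2}$, and is therefore invertible with operator norm of its inverse bounded by $\|g_k\|^{-1/2}$. Consequently, \eqref{eq:regnewton} gives $\|d_k\| \le \|g_k\|/\|g_k\|^{1/2} = \|g_k\|^{1/2} \le U_g^{1/2}$.

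Combining the three bounds yields $\|d_k\| \le \max\{U_H, U_g^{1/2}\}$ as claimed. There is no real obstacle here: the argument is a direct bookkeeping exercise, the only subtlety being the care needed to handle the case $\|g_k\| = 0$ in Steps 4 and 5 (where $d_k = 0$ trivially satisfies the bound) and to check that the regularization shift in Step 5 is exactly what is required to cancel the most negative value of $\lambda_k$ allowed by that branch.
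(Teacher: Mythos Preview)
Your proof is correct and follows essentially the same case analysis as the paper's own proof: zero step, negative curvature, Newton, and regularized Newton, with the same eigenvalue-based bounds in each branch. The only minor difference is that you explicitly invoke Assumption~\ref{as:finiteSGvar} to justify $\|g_k\|\le U_g$, which is indeed needed for the bound even though the lemma statement as written cites only Assumption~\ref{as:f:H}; this is a small oversight in the paper rather than in your argument.
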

\medskip

\revised{
We will now state and prove our main result on expected decrease of our method by combining the 
results of Lemmas~\ref{lm:bound:alpha_kd_k} and \ref{lem:bound_on_dk}. To this end, 
we introduce the following function on $[0,\infty) \times [0,1]$:}
%\begin{equation} \label{eq:rhofunction}
%	\pfun(t,q) \; := \;		\frac{(1-q) U_L}{(1-q) U_L+ q \frac{\eta t^3}{24}},\quad 
%	\mbox{where\ $U_L:=U_g\max\{U_H,U_g^{1/2}\}+\frac{L}{2}\max\{\revised{U^2_H},U_g\}%$}.
%\end{equation} 
\revised{
\begin{equation} \label{eq:rhofunction}
	\pfun(t,q) \; := \;		\max\left\{0, 1-\frac{q \frac{\eta t^3}{12}}{2(1-q) U_L}\right\},\quad 
	\mbox{where\ $U_L:=U_g\max\{U_H,U_g^{1/2}\}+\frac{L}{2}\max\{\revised{U^2_H},U_g\}$}.
\end{equation} 
}
\revised{With the convention that $\pfun(t,1)=0\ \forall t \ge 0$, the function $\pfun$ 
is well-defined} with values in $[0,1]$, and 
decreasing in its first and second arguments.

%\revised{
%We also introduce several sequences of random events:
%	\begin{equation} \label{eq:eventxkstationary}
%\begin{array}{lll}
%		E^{\sta}_k &= &\left\{ \mbox{$x_k$ is not $((1+\ccg)\epsilon,(1+\cch)\epsilon^{1/2})$-function stationary} \right\} \\
%		E^{1}_k &= &\left\{ \mbox{$\|\nabla f(x_k)\|\ge ((1+\ccg)\epsilon$} \right\} \\
%		E^{2}_k &= &\left\{ \mbox{$\lambda_{min}(\nabla^2 f(x_k)) < -(1+\cch)\epsilon^{1/2})$} \right\} 
%\end{array}
%	\end{equation}
%Note that both $E^1_k$ and $E^2_k$ belong to the $\sigma$-algebra $\mathcal{F}_{k-1}$, and 
%that $E^{\sta}_k = E^{2}_k \cup (E^{1}_k \cap E^{1}_{k+1})$.}
%
\revised{
We are now ready to establish a guarantee of expected decrease.}
To this end, define $T_{\epsilon}$ as the first iteration index $k$ of 
Algorithm~\ref{alg:alas} for which 
\[
	\min\{\|\nabla f(x_k)\|,\|\nabla f(x_{k+1})\|\} \le (1+\ccg)\epsilon  \quad \mbox{and} \quad 
	\lambda_{\min}\left(\nabla^2 f(x_k)\right) \ge -(1+\cch)\epsilon^{1/2}.
%	\min\{\|g_k\|,\|g^+_k\|\} < \epsilon  \quad \mbox{and} \quad \lambda_k > -\epsilon^{1/2}.
\]
We shall calculate the expected decrease for any iteration $k$ such that $T_\epsilon>k$. 
%\crnote{This is the most correct version that I have come up with. 
%We cannot condition on $E_{k+1}^1$ on the right-hand side because then 
%we won't be able to use $\tilde p_k \ge p$; we cannot have any term on 
%the right-hand side that is not measurable ($\alpha_k \|d_k\|$, 
%$\|g_{k+1}\|$,etc); the case where $E_{k+1}^1$ does not happen is 
%tricky because we loose guarantees on the number of line-search iterations.}

%\revised{We are now ready to state our expected decrease guarantee. 
%This property will hold as long as we have not reached model stationarity 
%in the sense of Definition~\ref{def:stationary}. Therefore, we consider 
%$T_{\epsilon}$ to be the (random) index of the first 
%$(\eps,\eps^{1/2})$-model stationary iterate. Note that $T_{\epsilon}$ is 
%a stopping time as $\{T_{\epsilon}=k\} \in \mathcal{F}_k$  as both $x_{k+1}$ 
%and $\mathcal{S}_k$ belong to $\mathcal{F}_k$, but that 
%\[
%	\{T_{\epsilon} \ge k\} = 
%	\cap_{j=0}^{k-1} \{ T_{\epsilon} > j\} \in \mathcal{F}_{k-1}.
%\]
%}

\begin{theorem}\label{th:gendecrease}
Let Assumptions \ref{as:f:lower} and \ref{as:f:H} hold. Suppose also that 
Assumption~\ref{as:accurate:in:prob} holds with $\delta=(\delta_f,\delta_g,\delta_H)$ satisfying
	\begin{equation} \label{eq:bounddeltaepsgendecrease}
		\delta_f \le \frac{\eta}{24}c^3\epsilon^{3/2},\quad \delta_g \le \ccg\epsilon, \quad 
		\delta_H \le \cch \epsilon^{1/2}
	\end{equation}
where $\epsilon>0$, $\ccg \in (0,1)$, $\cch \in (0,1)$ and $c$ is chosen as in 
Lemma~\ref{lm:bound:alpha_kd_k}. 
%Finally, consider the following random event
%	\begin{equation} \label{eq:eventxkstationary}
%		E^{\sta}_k = \left\{ \mbox{$x_k$ is not $((1+\ccg)\epsilon,(1+\cch)\epsilon^{1/2})$-function stationary} \right\}.
%	\end{equation}
Then, if the sample fraction $\skn_k$ is chosen such that
	\begin{equation} \label{eq:samplesizebounddec}
		\skn_k \; \ge \; \pfun(c\epsilon^{1/2},p),
	\end{equation}
where $\pfun$ is given by~\eqref{eq:rhofunction}, then
	\revised{
		\begin{eqnarray} \label{eq:gendecrease2}
	& &	\mathbb{E}\left[f(x_k+\alpha_k d_k) - f(x_k)\ |\  \mathcal{F}_{k-1}, \mathcal{E}^1_k\cup\mathcal{E}^2_k\right]
%		\le -p \frac{\eta c^3}{24}\epsilon^{3/2}.
		\nonumber \\ 
	&  &	\le -p \frac{\eta c^3}{24}\epsilon^{3/2} \mathbb{P}\left(E^+_{k}
		|\ \mathcal{F}_{k-1},E^{1}_k\cup E_k^{2},I_k\right)
		+{\small \min\left\{0,\frac{2-p}{1-p}\right\}}\frac{\eta c^3}{24}\epsilon^{3/2}  \mathbb{P}\left(\overline{E^+_{k}} |
		\mathcal{F}_{k-1},E^{1}_k\cup E_k^{2},I_k\right).
	\end{eqnarray}
	%where $I_k$ is the event corresponding to the model being $\delta$-accurate.
}
\end{theorem}
\proof{Proof of Theorem~\ref{th:gendecrease}.}	
	By definition, one has that:
%	{\small{
%	\begin{eqnarray} \label{eq:good_bad_combined}
%		& & 
%		\E{f(x_{k+1}) - f(x_k)\ |\ \mathcal{F}_{k-1},E^{\sta}_k}  \nonumber \\ 
%	%	& & = \Pr(I_k|\mathcal{F}_{k-1},E^{\sta}_k) \E{f(x_{k+1})- f(x_k)\ |
%	%	\ \mathcal{F}_{k-1},E^{\sta}_k,I_k} + \nonumber \left(1- \Pr(I_k|\mathcal{F}_{k-1},E^{\sta}_k)\right) \E{f(x_{k+1})- f(x_k)\ |
%	%	\ \mathcal{F}_{k-1},E^{\sta}_k,\overline{I_k}\,}  \nonumber \\
%		&  &=  \tilde p_k \E{f(x_{k+1})- f(x_k)\ |\ \mathcal{F}_{k-1},E^{\sta}_k,I_k}  + 
%		 (1- \tilde p_k) \E{f(x_{k+1})- f(x_k)\ |\ \mathcal{F}_{k-1},E^{\sta}_k,\overline{I_k}\,}
%	\end{eqnarray}}}
	\revised{
		{\small{
	\begin{eqnarray} \label{eq:good_bad_combined}
		& & 
		\E{f(x_{k+1}) - f(x_k)\ |\ \mathcal{F}_{k-1},\mathcal{E}^{1}_k\cup \mathcal{E}_k^{2}}  \nonumber \\ 
	%	& & = \Pr(I_k|\mathcal{F}_{k-1},E^{\sta}_k) \E{f(x_{k+1})- f(x_k)\ |
	%	\ \mathcal{F}_{k-1},E^{\sta}_k,I_k} + \nonumber \left(1- \Pr(I_k|\mathcal{F}_{k-1},E^{\sta}_k)\right) \E{f(x_{k+1})- f(x_k)\ |
	%	\ \mathcal{F}_{k-1},E^{\sta}_k,\overline{I_k}\,}  \nonumber \\
		&  &=  \tilde p_k \E{f(x_{k+1})- f(x_k)\ |\ \mathcal{F}_{k-1},\mathcal{E}^{1}_k\cup \mathcal{E}_k^{2},I_k}  + 
		 (1- \tilde p_k) \E{f(x_{k+1})- f(x_k)\ |\ \mathcal{F}_{k-1},\mathcal{E}^{1}_k\cup \mathcal{E}_k^{2},\overline{I_k}\,} \nonumber \\ 
		&  &=  \tilde p_k \E{f(x_{k+1})- f(x_k)\ |\ \mathcal{F}_{k-1},E^{1}_k\cup E_k^{2},I_k}  + 
		 (1- \tilde p_k) \E{f(x_{k+1})- f(x_k)\ |\ \mathcal{F}_{k-1},\mathcal{E}^{1}_k\cup \mathcal{E}_k^{2},\overline{I_k}\,} 
	\end{eqnarray}}}
	}
	in which $I_k$ is the event corresponding to the model being $\delta$-accurate and 
	%$\tilde p_k :=\Pr(I_k|\mathcal{F}_{k-1},E^{\sta}_k)$ 
	\revised{$\tilde p_k :=\Pr(I_k|\mathcal{F}_{k-1},\mathcal{E}^{1}_k\cup \mathcal{E}_k^{2})$, and we have used the equivalence of $E^1_k\cup E^2_k$ and
        $\mathcal{E}^1_k\cup\mathcal{E}^2_k$ under $I_k$.}
	%\crnote{If we condition on something that does not belong to $\mathcal{F}_{k-1}$, we will not be able to use the bound $\tilde p_k \ge p$.}
%        Note that $E^{\sta}_k$ depends only on the step as determined in iteration $k-1$, and thus
%        depending on the subsampling performed at iteration $k-1$. Thus $\sigma(E^{\sta}_k) \subset \mathcal{F}_{k-1}$
     \revised{
      Recall that $\mathcal{E}^{1}_k$ and $\mathcal{E}^2_k$ only depend on the history of the algorithm prior to iteration $k-1$, thus the $\sigma$-algebra generated by $\mathcal{E}^{1}_k\cup \mathcal{E}_k^{2}$ is included in $ \mathcal{F}_{k-1}$.%\vknote{Note actually that this holds for $\mathcal{E}_k^i$ not $E^i_k$, which does depend on $I_k$}
    }
	%Since $\mathcal{F}_{k-1} \cap E^{\sta}_k \subset \mathcal{F}_{k-1}$,
    We can then apply a result from probability theory~\cite[Theorem 5.1.6]{durrett2010probatheory} 
    stating that for any random variable $X$ (not necessarily belonging to $\mathcal{F}_{k-1}$),%belonging to $\mathcal{F}_{k-1} \cap E^{\sta}_k$ or $\mathcal{F}_{k-1}$), 
        we have: 
%	\begin{equation} \label{eq:inclusionsigalg}
%		\E{X |\mathcal{F}_{k-1}, E^{\sta}_k } = 
%		\E{\E{X|\mathcal{F}_{k-1}} | \mathcal{F}_{k-1}, E^{\sta}_k}
%	\end{equation}
	\revised{
	\begin{equation} \label{eq:inclusionsigalg}
		\E{X |\mathcal{F}_{k-1}, \mathcal{E}^{1}_k\cup \mathcal{E}_k^{2} } = 
		\E{\E{X|\mathcal{F}_{k-1}} | \mathcal{F}_{k-1}, \mathcal{E}^{1}_k\cup \mathcal{E}_k^{2}}
	\end{equation}
	}
	Therefore, denoting by $\mathbf{1}(I_k)$ the indicator variable of the random event $I_k$, we have
%		{\small{
%	\begin{eqnarray*}
%		\tilde p_k= \Pr\left( I_k | \mathcal{F}_{k-1}, E^{\sta}_k \right) 
%		&= &\E{ \mathbf{1}(I_k) |\mathcal{F}_{k-1}, E^{\sta}_k } 
%		= \E{\E{\mathbf{1}(I_k) |\mathcal{F}_{k-1}} | \mathcal{F}_{k-1} , E^{\sta}_k}  \\
%		&=& \E{ \Pr(I_k | \mathcal{F}_{k-1}) | \mathcal{F}_{k-1} , E^{\sta}_k} 
%		= \E{ p_k | \mathcal{F}_{k-1}, E^{\sta}_k} =p_k \ge p.
%	\end{eqnarray*}
%	}
	\revised{
	{\small{
	\begin{eqnarray*}
		\tilde p_k= \Pr\left( I_k | \mathcal{F}_{k-1}, \mathcal{E}^{1}_k\cup \mathcal{E}_k^{2} \right) 
		&= &\E{ \mathbf{1}(I_k) |\mathcal{F}_{k-1}, \mathcal{E}^{1}_k\cup \mathcal{E}_k^{2} } 
		= \E{\E{\mathbf{1}(I_k) |\mathcal{F}_{k-1}} | \mathcal{F}_{k-1} , \mathcal{E}^{1}_k\cup \mathcal{E}_k^{2}}  \\
		&=& \E{ \Pr(I_k | \mathcal{F}_{k-1}) | \mathcal{F}_{k-1} , \mathcal{E}^{1}_k\cup \mathcal{E}_k^{2}} 
		= \E{ p_k | \mathcal{F}_{k-1}, \mathcal{E}^{1}_k\cup \mathcal{E}_k^{2}} =p_k \ge p,
	\end{eqnarray*}
	}}
	where the last equality comes from $p_k \in \mathcal{F}_{k-1}$.
	}
	
	\revised{
	It thus suffices to bound the two terms in \eqref{eq:good_bad_combined} independently to bound 
	the expected change in the function values. 
	We begin by the term corresponding to the occurrence of $I_k$ in \eqref{eq:good_bad_combined}, 
	and use the following decomposition:
	\begin{eqnarray*}
		& &\E{f(x_{k+1})- f(x_k)\ |\ \mathcal{F}_{k-1},E^{1}_k\cup E_k^{2},I_k} 
		\\
		&= & \E{f(x_{k+1})- f(x_k)\ |\ \mathcal{F}_{k-1},E^{1}_k\cup E_k^{2},I_k,E_{k}^+} 
		\mathbb{P}\left(E^+_{k} | \mathcal{F}_{k-1},E^{1}_k\cup E_k^{2},I_k\right) \\
		&& +\E{f(x_{k+1})- f(x_k)\ |\ \mathcal{F}_{k-1},E^{1}_k\cup E_k^{2},I_k, 
		\overline{E_{k}^+}}\,
		\mathbb{P}\left(\overline{E_k^{+}} | \mathcal{F}_{k-1},E^{1}_k\cup E_k^{2},I_k\right). 
	\end{eqnarray*}
	}
	\revised{When the event $E_k^{\sta}= (E^{1}_k\cap E_k^{+}) \cup E_{k}^2\cup I_k$ occurs, we can bound the first term
        of the right hand side as follows:
	\begin{eqnarray} 
		f(x_k+\alpha_k d_k)-f(x_k)
		&  =&f(x_k+\alpha_k d_k)-\hat f_k(x_k+\alpha_k d_k)+\hat f_k(x_k+\alpha_k d_k)-\hat f_k(x_k)+\hat f_k(x_k)-f(x_k) 
		\nonumber \\ 
		&\le  & 2\delta_f + \hat f_k(x_k+\alpha_k d_k)-\hat f_k(x_k) \nonumber \\
		&\le  & 2\delta_f -\frac{\eta}{6}\alpha_k^3\|d_k\|^3 \nonumber \\
		& \le & \frac{\eta c^3}{12}\epsilon^{3/2}-\frac{\eta}{6}\alpha_k^3\|d_k\|^3
		\label{eq:gendecproofIk:delta}\\
		&\le &-\frac{\eta}{12}\alpha_k^3\|d_k\|^3 \le -\frac{\eta c^3}{12}\epsilon^{3/2}, 
		\label{eq:gendecproofIk:adk}
		%\; \le \; -\pfun(c\epsilon^{1/2},p)\frac{\eta c^3}{12}\epsilon^{3/2} ,
	\end{eqnarray}
	%{\color{red} we stop at penultimate bound}
	%
	where~\eqref{eq:gendecproofIk:delta} comes from the 
	bound~\eqref{eq:bounddeltaepsgendecrease} on $\delta_f$, 
	and~\eqref{eq:gendecproofIk:adk} follows from Lemma~\ref{lm:bound:alpha_kd_k}.
	}
	%together with
	%$\alpha_k \|d_k\| \ge  c \;\epsilon^{1/2}$,	 which follows from  
	%and $E^{\sta}_k$.
	
	\revised{
	If we now condition on $E^{1}_k\cup E_k^{2},I_k$ and consider that 
	$E_{k}^+$ does not occur, we can no longer apply
	Lemma~\ref{lm:bound:alpha_kd_k}, but the derivation up 
	to~\eqref{eq:gendecproofIk:delta} still holds since the method does not 
	stop at iteration $k$.
	}
%	\crnote{If $\|g_k^+\|=0$, it is actually impossible to guarantee 
%	that the line-search process will terminate, so there is an issue here. An 
%	implicit fix consists in focusing on iterations for which $E_{k+1}^1$ holds, 
%	which we do in the proof of Theorem~\ref{th:complexity0}. However, this 
%	implies,that we condition on $E_{k+1}^1$, which is not measurable. 
%	We seem to be hitting again the issue that we had in the first version of the 
%	paper.}
%        \vknote{Seems like enough of a corner case to discard, no? i.e., that we assume for dense measure for any sample $g(x)\neq 0$}
	\revised{
	Overall, we obtain 
	\begin{eqnarray}\label{eq:gendecproofIk}
		& &\E{f(x_{k+1})- f(x_k)\ |\ \mathcal{F}_{k-1},E^{1}_k\cup E_k^{2},I_k} 
		\nonumber \\
		&\le &-\frac{\eta c^3}{12}\epsilon^{3/2} \mathbb{P}\left(E^+_{k} | 
		\mathcal{F}_{k-1},E^{1}_k\cup E_k^{2},I_k\right)
		+ \frac{\eta c^3}{12}\epsilon^{3/2} \mathbb{P}\left(\overline{E^+_{k}} |
		\mathcal{F}_{k-1},E^{1}_k\cup E_k^{2},I_k\right).
	\end{eqnarray}
	}	
	We now turn to the second case in (\ref{eq:good_bad_combined}), for which we exploit the 
	following decomposition:
	\begin{equation*}
		f(x_k+\alpha_k d_k) - f(x_k) = \skn_k\left(\hat f_k(x_k+\alpha_k d_k) - \hat f_k(x_k)\right) 
		+ \left(1-\skn_k\right)
		\left(f_{\mathcal{S}^c_k}(x_k+\alpha_k d_k) - f_{\mathcal{S}^c_k}(x_k) \right),
	\end{equation*}
	with $f_{\mathcal{S}^c_k} = \tfrac{1}{N-|\mathcal{S}_k|} \sum_{i \notin \mathcal{S}_k} f_i$.	
	Using the decrease condition~\eqref{eq:suff:cond3} to bound the first term, and a first-order 
	Taylor expansion to bound the second term, we obtain:
	\begin{eqnarray*} % \label{eq:gendecproofnotIk}
	f(x_k+\alpha_k d_k) - f(x_k) &\le &  -\skn_k\frac{\eta}{6}\alpha_k^3 \|d_k\|^3 + 
		\left(1-\skn_k\right) \frac{1}{N-|\mathcal{S}_k|}
		\sum_{i \notin \mathcal{S}_k} \left\{\alpha_k \nabla f_i(x_k)^\top d_k 
		+ \frac{L_{i}}{2}\alpha_k^2 \|d_k\|^2\right\} \nonumber \\ 
		& \le& -\skn_k\frac{\eta}{6}\alpha_k^3 \|d_k\|^3 +  \left(1-\skn_k\right) 
		\max_{i \notin \mathcal{S}_k} \left\{\alpha_k \nabla f_i(x_k)^\top d_k 
		+ \frac{L_{i}}{2}\alpha_k^2 \|d_k\|^2\right\} \nonumber \\
		& \le& -\skn_k\frac{\eta}{6}\alpha_k^3 \|d_k\|^3 +  \left(1-\skn_k\right) 
		\left\{\alpha_k \|\nabla f_{i_k}(x_k)\|\|d_k\| + \frac{L_{i_k}}{2}\alpha_k^2 \|d_k\|^2\right\},
	\end{eqnarray*}
	where $i_k \in \arg\max_{i \notin \mathcal{S}_k} \left\{\alpha_k \nabla f_i(x_k)^\top d_k + 
	\frac{L_{i}}{2}\alpha_k^2 \|d_k\|^2\right\}$ and we used the Lipschitz continuity assumption 
	on the functions $f_i$'s. Introducing the constants $U_g$ and $L$ to remove the dependencies 
	on $i_k$, we further obtain:		
	\begin{eqnarray*}
		f(x_k+\alpha_k d_k) - f(x_k) & \le & 
		-\skn_k\frac{\eta}{6}\alpha_k^3 \|d_k\|^3 +  \left(1-\skn_k\right) 
		 \left\{ U_g \alpha_k \|d_k\| + \frac{L}{2}\alpha_k^2 \|d_k\|^2\right\}.
	\end{eqnarray*}
	Using now Lemma~\ref{lem:bound_on_dk}, we arrive at:
	\begin{eqnarray*}
		f(x_k+\alpha_k d_k) - f(x_k) & \le &  \left(1-\skn_k\right) 
		 \left\{U_g\max\{U_H,U_g^{1/2}\}+\frac{L}{2}\max\{U^2_h,U_g\}\right\} =\left(1-\skn_k\right) U_L,% \le \left(1-\pfun(c\eps^{1/2},p)\right)U_L,
	\end{eqnarray*}
	where $U_L$ is defined as in~\eqref{eq:rhofunction}, and we use~\eqref{eq:samplesizebounddec}
	to obtain the last inequality.

	 \revised{
	 	Putting all cases together yields:
	\begin{eqnarray}
	\label{eq:finalth1}
	& &\mathbb{E}\left[f(x_k+\alpha_k d_k) - f(x_k) | \mathcal{F}_{k-1},\mathcal{E}^{1}_k\cup \mathcal{E}_k^{2}\right] \nonumber \\
	&\le &-\tilde{p}_k\frac{\eta c^3}{12}\epsilon^{3/2} \mathbb{P}\left(E^+_{k} | 
		\mathcal{F}_{k-1},E^{1}_k\cup E_k^{2},I_k\right)
		+ \tilde{p}_k \frac{\eta c^3}{12}\epsilon^{3/2} \mathbb{P}\left(\overline{E^+_{k}} |
		\mathcal{F}_{k-1},E^{1}_k\cup E_k^{2},I_k\right)
		+ (1-\tilde p_k) \left(1-\pi_k \right) U_L \nonumber \\
	& = & \left[-\tilde p_k \frac{\eta c^3}{12}\epsilon^{3/2}  + 
	(1-\tilde p_k) \left(1-\pi_k\right) U_L\right]\mathbb{P}\left(E^+_{k} | 
		\mathcal{F}_{k-1},E^{1}_k\cup E_k^{2},I_k\right)  \nonumber \\ 
	& &~~+
	\tilde{p}_k \frac{\eta c^3}{12}\epsilon^{3/2} \mathbb{P}\left(\overline{E^+_{k}} |
		\mathcal{F}_{k-1},E^{1}_k\cup E_k^{2},I_k\right) +
	(1-\tilde p_k) \left(1-\pi_k\right) U_L (1-\mathbb{P}\left(E^+_{k} | 
		\mathcal{F}_{k-1},E^{1}_k\cup E_k^{2},I_k\right) )   \nonumber\\
	& \le & \left[-p\frac{\eta c^3}{12}\epsilon^{3/2}  + 
	(1-p) \left(1-\pi_k\right) U_L\right]\mathbb{P}\left(E^+_{k} | 
		\mathcal{F}_{k-1},E^{1}_k\cup E_k^{2},I_k\right)  \nonumber\\ 
	& &~~+
	\frac{\eta c^3}{12}\epsilon^{3/2} \mathbb{P}\left(\overline{E^+_{k}} |
		\mathcal{F}_{k-1},E^{1}_k\cup E_k^{2},I_k\right) +
	\left(1-\pi_k\right) U_L (1-\mathbb{P}\left(E^+_{k} | 
		\mathcal{F}_{k-1},E^{1}_k\cup E_k^{2},I_k\right) ) \nonumber\\ 
			& \le & \left[-p\frac{\eta c^3}{12}\epsilon^{3/2}  + 
	(1-p) \left(1-\pi_k\right) U_L\right]\mathbb{P}\left(E^+_{k} | 
		\mathcal{F}_{k-1},E^{1}_k\cup E_k^{2},I_k\right)  \nonumber\\ 
			& &~~+
	\left[\frac{\eta c^3}{12}\epsilon^{3/2}+ \left(1-\pi_k\right) U_L \right] \mathbb{P}\left(\overline{E^+_{k}} |
		\mathcal{F}_{k-1},E^{1}_k\cup E_k^{2},I_k\right).
	\end{eqnarray} 
%	where the last inequality comes from $\tilde{p}_k \ge p$.
	}
%	\crnote{Here is another issue. We cannot split the terms the way we usually 
%	did, because conditioned probabilities do not necessarily sum to 1. As a 
%	result, we need to introduce additional quantities, and I worry about how 
%	to address those in the proof of Theorem~\ref{th:complexity0}.}
	\revised{
	Finally, the bound~\eqref{eq:samplesizebounddec} guarantees that 
	\[
		1 - \pi_k \le \frac{p\eta c^3}{24(1-p)U_L}\epsilon^{3/2}
		\; \Longrightarrow \;
		 \left\{
	\begin{array}{ll}
				-p \frac{\eta c^3}{12}\epsilon^{3/2} + 
		(1-p) \left(1-\pi_k\right) U_L \le -p \frac{\eta c^3}{24}\epsilon^{3/2}; \\
		& \\
		\frac{\eta c^3}{12}\epsilon^{3/2}+ \left(1-\pi_k\right) U_L  \le \left(\frac{2-p}{1-p}\right)\frac{\eta c^3}{24}\epsilon^{3/2} .
	\end{array}
	\right.
	\]
	when $\pi_k < 1$ and $p<1$.
	Plugging these inequalities into~\eqref{eq:finalth1} leads to the desired result. The result trivially holds also for $\pi_k=1$ or $p=1$. 
%	\begin{eqnarray*} %\label{eq:gendecproofnotIk}
%		\E{f(x_k+\alpha_k d_k) - f(x_k)\ |\ \mathcal{F}_{k-1},E^{1}_k\cup E_k^{2}}
%		& \le &-p \frac{\eta c^3}{24}\epsilon^{3/2} \mathbf{1}\left[E^1_{k+1}\right]
%		\nonumber \\
%		& &+\left[\frac{\eta c^3}{12}\epsilon^{3/2}+ \left(1-\pi_k\right) U_L\right]
%	 \mathbf{1}\left[\overline{E^1_{k+1}}\right],
%	\end{eqnarray*}
%	proving the desired result.
	}

\endproof 

\revised{We now comment on the assumptions needed to establish 
Theorem~\ref{th:gendecrease}. First, we observe that the accuracy 
requirements~\eqref{eq:bounddeltaepsgendecrease} can arise as a direct 
consequence of $\pi_k$ being sufficiently high (see the online companion for a 
full illustration). However, it also encompasses the use of inexact values on 
top of sampling, one particular case being the use of approximate values and 
derivatives in a deterministic framework (i.e. $\pi_k=1$ and inexact values 
are used).
Secondly, we point out that the result of Theorem~\ref{th:gendecrease} is 
established under a uniform bound on the sampling fraction $\pi_k$: although 
this suggests to use a constant value for $\pi_k$ (which we do in our 
experiments), this does not preclude from using an adaptive value for 
$\pi_k$. Indeed, any value satisfying~\eqref{eq:samplesizebounddec} would suit 
our purpose, thus the choice of $\pi_k$ could be made adaptively by increasing 
the sample size. Moreover,  the proof of Theorem~\ref{th:gendecrease} helps in 
identifying iteration-dependent quantities that could be used as bounds on 
$\pi_k$, e.g. via~\eqref{eq:finalth1}. Still, such (random) quantities are  
challenging to estimate in practice, and it is unclear whether their 
manipulation in the upcoming convergence analysis can lead to complexity 
guarantees such as those presented in the next section: we thus elected to 
focus on a constant lower bound for the sample size.
}

%%%%%%%%%%%%%%%%%%%%%%%%%%%%%%%%%%%%%%%%%%%%%%%%%%%%%%%%%%%%%%%%%%%%%%%%%%%%%%%
\section{Global convergence rate and complexity analysis} 
\label{sec:cvwcc}
%%%%%%%%%%%%%%%%%%%%%%%%%%%%%%%%%%%%%%%%%%%%%%%%%%%%%%%%%%%%%%%%%%%%%%%%%%%%%%%

\revised{In this section, we build on Theorem~\ref{th:gendecrease} to derive a 
global rate of convergence towards an approximate stationary point in 
expectation for Algorithm~\ref{alg:alas}}. More precisely, we seek an 
$((1+\kappa_g)\epsilon,(1+\kappa_H)\epsilon^{1/2})$-function stationary point 
in the sense of Definition~\ref{def:stationary}, that is, an iterate 
satisfying:
\begin{equation} \label{eq:epspoint}
	\min\left\{\|\nabla f(x_k)\|,\|\nabla f(x_{k+1})\|\right\} \le 
	(1+\ccg)\epsilon \quad \mbox{and} \quad 
	\lambda_{\min}(\nabla^2 f(x_k)) \ge -(1+\cch) \epsilon^{1/2}.
\end{equation} 
Since our method only operates with a (subsampling) model of the objective, we are only able to 
check whether the current iterate is an $(\epsilon,\epsilon^{1/2})$-\emph{model} stationary point 
according to Definition~\ref{def:stationary}, i.e.  an iterate $x_k$ such that 
%\begin{equation*} \label{eq:modelepspoint}
$
	\min\{\|g_k\|,\|g_k^+\|\} \le \epsilon \quad \mbox{and} \quad 
	\lambda_k \ge -\epsilon^{1/2}.
$
%\end{equation*}
Compared to the general setting of Definition~\ref{def:stationary}, we are using 
$\epsilon_g=\epsilon_H^2=\epsilon$. This specific choice of first- and second-order tolerances has been 
observed to yield optimal complexity bounds for a number of algorithms, in the sense that the dependence 
on $\epsilon$ is minimal (see e.g.~\cite{carmon2018accelnonconvex,royer2018complexity}). The rules 
defining what kind of direction (negative curvature, Newton, etc) is chosen at every iteration of Algorithm~\ref{alg:alas} implicitly rely on this choice.

Our goal is to relate the model stationarity property~\eqref{eq:accuratenonstatiomodel} to its function 
stationarity counterpart~\eqref{eq:epspoint}. For this purpose, we first establish a general result 
regarding the expected number of iterations required to reach function stationarity. We then introduce a 
stopping criterion involving multiple consecutive iterations of model stationarity: with this criterion, 
our algorithm can be guaranteed to terminate at a function stationary point with high probability. 
Moreover, the expected number of iterations until this termination occurs is of the same order of 
magnitude as the expected number of iterations required to reach a stationary point.
%%%%%%%%%%%%%%%%%%%%%%%%%%%%%%%%%%%%%%%%%%%%%%%%%%%%%%%%%%%%%%%%%%%%%%%%%%%%%%%
\subsection{Expected iteration complexity} 
\label{subsec:cvwcc:iterwcc}
The proof of our expected complexity bound relies upon two arguments from martingales and stopping time 
theory. The first one is a martingale convergence result~\cite[Theorem 1]{robbins1971convergence}, which 
we adapt to our setting in Theorem~\ref{th:mart}.
\begin{theorem}\label{th:mart}
Let $(\Omega,\Sigma,\mathbb{P})$ be a probability space and $\{\Sigma^k\}_k$ be a 
sequence of sub-sigma algebras of $\Sigma$ such that $\Sigma^k \subset \Sigma^{k+1}$.
If $\zeta^k$ is a positively valued sequence of random variables on $\Sigma$, and if 
there exists a deterministic sequence $\nu^k\ge 0$ such that 
$
\mathbb{E}[\zeta^{k+1}|\Sigma^k] + \nu^k \le \zeta^k
$, then $\zeta^k$ converges to a $[0,\infty)$-valued random variable almost surely and 
$\sum_k \nu^k <\infty$.
\end{theorem}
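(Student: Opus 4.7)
The plan is to recognize the statement as (a streamlined version of) the Robbins–Siegmund nonnegative almost-supermartingale theorem and to derive both conclusions from Doob's classical martingale convergence theorem plus a telescoping argument.

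First, I would observe that the hypothesis $\mathbb{E}[\alpha^{k+1}\mid\Sigma^k] + \nu^k \le \alpha^k$ together with $\nu^k\ge 0$ directly yields $\mathbb{E}[\alpha^{k+1}\mid\Sigma^k]\le\alpha^k$. Assuming the standard integrability that is implicit in the conditional-expectation hypothesis (in particular $\mathbb{E}[\alpha^0]<\infty$, which propagates to all $\alpha^k$ by induction since $\mathbb{E}[\alpha^{k+1}]\le\mathbb{E}[\alpha^k]-\nu^k\le\mathbb{E}[\alpha^k]$), the sequence $\{\alpha^k\}$ is a nonnegative $\{\Sigma^k\}$-supermartingale. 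I would then invoke Doob's convergence theorem for nonnegative supermartingales to conclude that $\alpha^k$ converges almost surely to a $[0,\infty)$-valued random variable $\alpha^\infty$ with $\mathbb{E}[\alpha^\infty]\le\mathbb{E}[\alpha^0]<\infty$. This handles the first conclusion.

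For the summability of $\nu^k$, I would take unconditional expectations of the defining inequality. Since $\nu^k$ is deterministic, this gives $\mathbb{E}[\alpha^{k+1}]+\nu^k\le\mathbb{E}[\alpha^k]$, and telescoping from $k=0$ to $k=K-1$ yields
\[
\sum_{k=0}^{K-1}\nu^k \;\le\; \mathbb{E}[\alpha^0]-\mathbb{E}[\alpha^K] \;\le\; \mathbb{E}[\alpha^0],
\]
using $\alpha^K\ge 0$. Since the partial sums are monotone increasing (as $\nu^k\ge 0$) and bounded above by the finite constant $\mathbb{E}[\alpha^0]$, letting $K\to\infty$ gives $\sum_k\nu^k<\infty$, which is the second conclusion.

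The only delicate point, and the main obstacle I would flag, is the implicit integrability assumption on $\alpha^0$: without $\mathbb{E}[\alpha^0]<\infty$, neither the supermartingale convergence theorem applies in its most standard form nor does the telescoping bound on $\sum\nu^k$ yield anything finite. In the context of the paper this will be satisfied because $\alpha^k$ will be instantiated by a shifted, truncated objective value (which is bounded below by Assumption~\ref{as:f:lower} and has finite expectation at the initial point since $f(x_0)$ is deterministic), so no technical workaround is needed. The remainder is purely a routine appeal to Doob's theorem plus a telescoping sum, so I would not expect any further difficulty.
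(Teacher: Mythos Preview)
Your proof is correct, but there is nothing to compare it against: the paper does not prove Theorem~\ref{th:mart}. It is stated as an adaptation of a known martingale convergence result, cited directly from Robbins and Siegmund (1971), and invoked as a black box in the proof of Theorem~\ref{th:complexity0}. You have correctly recognized the statement as the Robbins--Siegmund lemma in a simplified form (no multiplicative $(1+\beta^k)$ factor, deterministic $\nu^k$, and no summable slack term), and your argument---nonnegative supermartingale convergence via Doob plus a telescoping bound on $\sum\nu^k$---is exactly the standard way to prove this special case. Your caveat about needing $\mathbb{E}[\alpha^0]<\infty$ is well placed and, as you note, is satisfied in the paper's application since $\alpha^k=f(x_k)-f_{\mathrm{low}}$ with $f(x_0)$ deterministic.
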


At each iteration, Theorem~\ref{th:gendecrease} guarantees a certain expected decrease for 
the objective function. Theorem~\ref{th:mart} will be used to show that such a decrease cannot hold 
indefinitely if the objective is bounded from below.

The second argument comes from stopping time analysis (see, e.g., 
\cite[Theorem 6.4.1]{ross1996stochastic}) and is given in Theorem~\ref{th:stop}. The notations have 
been adapted to our setting.
\begin{theorem}\label{th:stop}
Let $T$ be a stopping time for the process $\{Z_k,k\ge 0\}$ and let $\bar Z_k=Z_k$ for $k\le T$ and 
$\bar Z_k=Z_T$ for $k>T$. If either one of the three properties hold: (i) $\bar Z_k$ is uniformly bounded; (ii) $T$ is bounded; or (iii) $\mathbb{E}[T]<\infty$ and there is an $R<\infty$ such that 
	$\mathbb{E}[|Z_{k+1}-Z_k|\,|\,Z_0,...,Z_k]<R$.
%\begin{itemize}
%	\item $\bar Z_k$ is uniformly bounded;
%	\item $T$ is bounded;
%	\item $\mathbb{E}[T]<\infty$ and there is an $R<\infty$ such that 
%	$\mathbb{E}[|Z_{k+1}-Z_k|\,|\,Z_0,...,Z_k]<R$;
%\end{itemize} 
Then, $\E{\bar{Z}_k} \rightarrow \E{Z_T}$. 

Moreover, $\mathbb{E}[Z_T] \ge \mathbb{E}[Z_0]$ (resp. 
$\mathbb{E}[Z_T] \le \mathbb{E}[Z_0]$) if $\{Z_k\}$ is a submartingale (resp. a supermartingale).
\end{theorem}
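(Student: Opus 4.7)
The plan is to split the proof into two parts: first, establish that $\E{\bar Z_k} \to \E{Z_T}$ as $k\to\infty$ under each of the three hypotheses; second, combine this with the (sub/super)martingale property of $\{Z_k\}$ to deduce the one-sided inequality between $\E{Z_T}$ and $\E{Z_0}$. The starting observation for the convergence part is the pointwise statement $\bar Z_k(\omega)=Z_{T(\omega)}(\omega)$ whenever $k\ge T(\omega)$, so $\bar Z_k \to Z_T$ almost surely on $\{T<\infty\}$ (and in all three regimes one has $T<\infty$ a.s., either by assumption or because $\E{T}<\infty$).

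For the three hypotheses I would argue as follows. Under hypothesis (i), uniform boundedness of $\bar Z_k$ lets me invoke the bounded convergence theorem directly. Under hypothesis (ii), if $T\le K$ deterministically then $\bar Z_k = Z_T$ for every $k\ge K$, so the expectation equality holds trivially. Under hypothesis (iii), I would construct the telescoping dominating function $|\bar Z_k|\le |Z_0|+\sum_{j=0}^{T-1}|Z_{j+1}-Z_j|$ and check its integrability by Fubini and conditioning on $\sigma(Z_0,\ldots,Z_j)$:
\[
\E{\sum_{j=0}^{T-1}|Z_{j+1}-Z_j|} = \sum_{j\ge 0}\E{\mathbf{1}_{\{T>j\}}\,\E{|Z_{j+1}-Z_j|\mid Z_0,\ldots,Z_j}} \le R\,\E{T} < \infty,
\]
where the measurability $\{T>j\}=\{T\le j\}^c\in\sigma(Z_0,\ldots,Z_j)$ comes from $T$ being a stopping time and lets the bound $R$ be pulled out of the outer expectation. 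Dominated convergence then yields $\E{\bar Z_k}\to\E{Z_T}$.

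For the inequality, the key identity is $\bar Z_{k+1}-\bar Z_k = (Z_{k+1}-Z_k)\,\mathbf{1}_{\{T>k\}}$, with the indicator $\sigma(Z_0,\ldots,Z_k)$-measurable again by the stopping-time property. Taking conditional expectation and using that $\{Z_k\}$ is a submartingale gives $\E{\bar Z_{k+1}\mid Z_0,\ldots,Z_k}\ge \bar Z_k$, so by induction $\E{\bar Z_k}\ge \E{\bar Z_0}=\E{Z_0}$. Passing to the limit via the first part yields $\E{Z_T}\ge\E{Z_0}$, and the supermartingale case follows by applying the same argument to $\{-Z_k\}$.

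The main obstacle I anticipate is the third hypothesis: one must carefully handle the random upper summation index $T-1$ in the dominating function and place the measurability of $\{T>j\}$ at the exact point where the conditional expectation is introduced, so that the bound $R$ can be factored out cleanly. The other two cases reduce essentially to a direct appeal to a standard convergence theorem once the stopped-process identity has been recorded.
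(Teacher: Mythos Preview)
Your argument is the standard proof of the optional stopping theorem and is essentially correct. The telescoping domination under hypothesis (iii), the use of $\{T>j\}\in\sigma(Z_0,\dots,Z_j)$ to pull out the bound $R$, and the verification that the stopped process inherits the (sub/super)martingale property via $\bar Z_{k+1}-\bar Z_k=(Z_{k+1}-Z_k)\mathbf{1}_{\{T>k\}}$ are all handled cleanly. One small point: under hypothesis (iii) your dominating function $|Z_0|+\sum_{j=0}^{T-1}|Z_{j+1}-Z_j|$ requires $\E{|Z_0|}<\infty$, which is implicit once the process is a (sub/super)martingale but is not technically part of the hypotheses as stated for the bare convergence $\E{\bar Z_k}\to\E{Z_T}$; this is a minor wrinkle in how the theorem is phrased rather than a flaw in your reasoning.

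As for comparison with the paper: there is nothing to compare. The paper does not prove this result; it quotes it verbatim from Ross, \emph{Stochastic Processes} (Theorem~6.4.1), adapting only the notation. So you have supplied a full proof where the paper simply invokes a reference.
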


Theorem~\ref{th:stop} enables us to exploit the martingale-like property of 
Definition~\ref{def:accurate:proba} in order to characterize the index of the first stationary point 
encountered by the method.
Using both theorems along with Theorem~\ref{th:gendecrease}, we bound the expected 
number of iterations needed by Algorithm \ref{alg:alas} to produce an approximate function stationary 
point for the model.

\begin{theorem}\label{th:complexity0}
Let Assumptions \ref{as:f:lower}, \ref{as:f:H} and \ref{as:accurate:in:prob} hold,
with $\delta=(\delta_f,\delta_g,\delta_H)$ satisfying~\eqref{eq:bounddeltaepsgendecrease}. Suppose 
that for every index $k$, the sample size $\skn_k$ satisfies~\eqref{eq:samplesizebounddec}.
Denote $T_{\epsilon}$ to be the first iteration index $k$ of 
Algorithm~\ref{alg:alas} for which 
\[
	\min\{\|\nabla f(x_k)\|,\|\nabla f(x_{k+1})\|\} \le (1+\ccg)\epsilon  \quad \mbox{and} \quad 
	\lambda_{\min}\left(\nabla^2 f(x_k)\right) \ge -(1+\cch)\epsilon^{1/2}.
%	\min\{\|g_k\|,\|g^+_k\|\} < \epsilon  \quad \mbox{and} \quad \lambda_k > -\epsilon^{1/2}.
\]
Then, $T_{\epsilon}<\infty$ almost surely, and 
\begin{equation} \label{eq:complexity0} 
	\E{T_\epsilon}\; \le \frac{\left(f(x_0)- f_{\low}\right)}{c_{\epsilon}}+1,~~~\mbox{where}~ %c_{\epsilon} = 
%		\left\{
%			\begin{array}{ll} 
%				\hat{c}\epsilon^{3/2} &\mbox{if $\skn_k=1\ \forall k$} \\
%				\hat{c}\pfun(c\epsilon^{1/2},p)\epsilon^{3/2} &\mbox{otherwise,}
%			\end{array}
%		\right.~~\mbox{and}~~\hat{c}=\tfrac{\eta}{24}c^3.
		c_{\epsilon} = p  \hat{c} \epsilon^{3/2}~~\mbox{and}~~\hat{c}=\tfrac{\eta}{24}c^3.
\end{equation}
%where 
%	\begin{equation} \label{eq:cepsilon}
%		c_{\epsilon} = 
%		\left\{
%			\begin{array}{ll} 
%				\hat{c}\epsilon^{3/2} &\mbox{if $\skn_k=1\ \forall k$} \\
%				\hat{c}\pfun(c\epsilon^{1/2},p)\epsilon^{3/2} &\mbox{otherwise,}
%			\end{array}
%		\right.
%	\end{equation}
%and $\hat{c}=\tfrac{\eta}{24}c^3$.
\end{theorem}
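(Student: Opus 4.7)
The strategy is to convert the per-iteration expected decrease guaranteed by Theorem~\ref{th:gendecrease} into a bound on the stopping time $\tau := T_\epsilon$, using the martingale machinery of Theorems~\ref{th:mart} and~\ref{th:stop}. The starting observation is that on $\{k < \tau\}$, by definition of $\tau$ the iterate $x_k$ is not an $((1+\ccg)\epsilon,(1+\cch)\epsilon^{1/2})$-function stationary point, so the event $E_k^{\sta}$ of Theorem~\ref{th:gendecrease} holds, yielding
\begin{equation*}
\E{f(x_{k+1}) - f(x_k) \mid \mathcal{F}_{k-1}, E_k^{\sta}} \;\le\; -\pfun(c\epsilon^{1/2},p)\,\hat c\,\epsilon^{3/2},
\end{equation*}
where $\hat c = \tfrac{\eta c^3}{24}$.

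To prove that $\tau < \infty$ almost surely, I would apply Theorem~\ref{th:mart} to the nonnegative stopped process $\alpha^k := f(x_{k \wedge \tau}) - f_{\low}$ (nonnegativity coming from Assumption~\ref{as:f:lower}) with the deterministic sequence $\nu^k := \pfun(c\epsilon^{1/2},p)\,\hat c\,\epsilon^{3/2}\,\mathbb{P}(k < \tau)$. A case split verifies the hypothesis $\E{\alpha^{k+1} \mid \mathcal{F}_{k-1}} + \nu^k \le \alpha^k$: on $\{k \ge \tau\}$ the stopped process is frozen so the inequality is trivial, while on $\{k < \tau\}$ the decrease of Theorem~\ref{th:gendecrease} gives the required bound after integrating out the conditioning on $E_k^{\sta}$. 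Theorem~\ref{th:mart} then produces $\sum_k \nu^k < \infty$, which forces $\mathbb{P}(k < \tau)$ to be summable, and hence $\tau < \infty$ almost surely.

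For the expectation bound I would pass to the stopped supermartingale $M_k := f(x_{k \wedge \tau}) - f_{\low} + (k \wedge \tau)\,\pfun(c\epsilon^{1/2},p)\,\hat c\,\epsilon^{3/2}$. Lemma~\ref{lem:bound_on_dk} combined with Assumptions~\ref{as:f:H} and~\ref{as:finiteSGvar} delivers a uniform bound on $|f(x_{k+1}) - f(x_k)|$, so the third hypothesis of Theorem~\ref{th:stop} is satisfied and we obtain $\E{M_\tau} \le M_0 = f(x_0) - f_{\low}$. Since $f(x_\tau) \ge f_{\low}$, rearranging yields
\begin{equation*}
\pfun(c\epsilon^{1/2},p)\,\hat c\,\epsilon^{3/2}\,\E{\tau} \;\le\; f(x_0) - f_{\low},
\end{equation*}
which is the stochastic branch of~\eqref{eq:complexity0}; the additive $+1$ reflects that $\tau$ counts the stationary iterate itself whereas decrease only accrues on $\{k < \tau\}$. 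The deterministic branch $\skn_k=1$ follows identically once one observes that $m_k \equiv f$ upgrades Theorem~\ref{th:gendecrease}'s inequality to a pathwise decrease of $\hat c\,\epsilon^{3/2}$ per non-stationary iteration, so the same telescoping argument applies with $\pfun$ replaced by $1$.

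The main technical obstacle I anticipate is the careful handling of measurability. Because the function stationarity definition~\eqref{eq:accuratenonstatiocondf} involves $\nabla f(x_{k+1})$, the event $E_k^{\sta}$ is measurable in $\mathcal{F}_k$ rather than $\mathcal{F}_{k-1}$, and the stopping time $\tau$ is adapted to $\{\mathcal{F}_k\}$. Matching Theorem~\ref{th:gendecrease}'s conditioning on $\{\mathcal{F}_{k-1}, E_k^{\sta}\}$ with the filtration needed for Theorems~\ref{th:mart} (which insists on a \emph{deterministic} $\nu^k$) and~\ref{th:stop} requires a tower-property argument, possibly lower-bounding $\mathbb{P}(E_k^{\sta} \mid \mathcal{F}_{k-1})$ on $\{k < \tau\}$ by its deterministic counterpart, in order to produce a usable, genuinely deterministic lower bound on the expected decrease per pre-termination step.
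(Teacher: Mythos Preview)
Your overall strategy coincides with the paper's: feed the per-iteration expected decrease of Theorem~\ref{th:gendecrease} into Theorem~\ref{th:mart} for almost-sure finiteness, then build a stopped supermartingale and invoke Theorem~\ref{th:stop} for the expectation bound. Your $M_k$ is essentially the paper's $R_k = f(x_{\min(k,T_\epsilon)}) + \max(\min(k,T_\epsilon)-1,0)\,c_\epsilon$, and you have correctly flagged the measurability of $E_k^{\sta}$ (which sits in $\mathcal{F}_k$, not $\mathcal{F}_{k-1}$) as the genuine obstacle.

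Where your proposal does not go through is the application of Theorem~\ref{th:mart} with $\nu^k := c_\epsilon\,\mathbb{P}(k<\tau)$. That theorem demands the inequality $\E{\alpha^{k+1}\mid\Sigma^k} + \nu^k \le \alpha^k$ \emph{almost surely}, not merely in expectation. On the event $\{k\ge\tau\}$ your stopped process is frozen, so the left side equals $\alpha^k + \nu^k$, and you would need $\nu^k\le 0$ --- which fails whenever $\mathbb{P}(k<\tau)>0$. A single deterministic positive $\nu^k$ cannot satisfy the pointwise inequality on both branches of your case split simultaneously; the ``trivial'' case is in fact where the argument breaks.

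The paper's fix is not to modify $\nu^k$ but to change the filtration: it works in the trace $\sigma$-algebras $\mathcal{T}_k := \mathcal{F}_{k-1}\cap\bigl(E_0^{\sta}\cap\cdots\cap E_k^{\sta}\bigr)$, effectively conditioning the entire argument on $\{\tau>k\}$. In this filtration the \emph{unstopped} process $\alpha^k=f(x_k)-f_{\low}$ always enjoys the full decrease, so one may take $\nu^k=c_\epsilon$ constant; Theorem~\ref{th:mart} then yields $\sum_k c_\epsilon<\infty$, a direct contradiction establishing $T_\epsilon<\infty$ a.s. The same trace filtration underlies the supermartingale step. This is precisely the ``tower-property argument'' you anticipated needing but did not yet specify; once you adopt it, the remainder of your outline is correct.
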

\proof{Proof of Theorem~\ref{th:complexity0}.}
	We first observe that 
	\revised{both $x_{k+1}$ and the sample size 
	$\mathcal{S}_k$ belong to $\mathcal{F}_k$, implying that}
	$\{T_{\epsilon} = k\} \in \mathcal{F}_k$ for all $k$ and $T_{\epsilon}$ is 
	indeed a stopping time.
%	\crnote{In the rest of the proof, we are implicitly conditioning every decrease 
%	at iteration $k$ on the event $k < T_{\epsilon}$. Intuitively it makes sense, 
%	but we should probably be more rigorous. Also, if we can do this conditioning 
%	here, then we should also be able to do this conditioning for
%	Theorem~\ref{th:gendecrease}, which would greatly simplify the proof of that
%	theorem.}

	We first show that the event $\{T_{\epsilon}=\infty\}$ has a zero 
	probability of occurrence. To this end, we suppose that for every iteration 
	index $k$, we have $k < T_{\epsilon}$. \revised{Recalling the definitions of 
	$\mathcal{E}^1_k:=\{\|\nabla f(x_k)\|> (1+\ccg)\epsilon\}$
    and $\mathcal{E}^2_k:=\{\lambda_{\min}\left(\nabla^2 f(x_k)\right) < -(1+\cch)\epsilon^{1/2}\}$,  
    having $k < T_{\epsilon}$ implies that the events 
	$(\mathcal{E}_0^1\cap \mathcal{E}_1^1)\cup\mathcal{E}_0^2,\dots,(\mathcal{E}_k^1\cap \mathcal{E}_{k+1}^1)\cup\mathcal{E}_{k}^2$} occur, where we recall that 
	\revised{both $\mathcal{E}_j^1$ and $\mathcal{E}_j^2$ belong to} $\mathcal{F}_{j-1}$. We thus 
	define the following filtration:
	\revised{
	\begin{equation} \label{eq:filtrationwcc}
		\mathcal{T}_{0} = \mathcal{F}_{-1} \cap \left(\mathcal{E}_0^1\cup\mathcal{E}_0^2\right),\quad \mathcal{T}_k = \mathcal{F}_{k-1} 
		\cap \left((\mathcal{E}_0^1\cup\mathcal{E}_0^2)\cap\dots\cap(\mathcal{E}_k^1\cup\mathcal{E}_{k}^2)\right) \, \forall k \ge 1,
	\end{equation}
%	\begin{equation} \label{eq:filtrationwcc}
%		\mathcal{T}_{0} = \mathcal{F}_{0} \cap \left((\mathcal{E}_0^1\cap \mathcal{E}_1^1)\cup\mathcal{E}_0^2\right),\quad \mathcal{T}_k = \mathcal{F}_{k} 
%		\cap \left(((\mathcal{E}_0^1\cap \mathcal{E}_1^1)\cup\mathcal{E}_0^2)\cap\dots\cap((\mathcal{E}_k^1\cap \mathcal{E}_{k+1}^1)\cup\mathcal{E}_{k}^2)\right) \, \forall k \ge 1,
%	\end{equation}
	}
%	\begin{equation} \label{eq:filtrationwcc}
%		\mathcal{T}_{0} = \mathcal{F}_{-1} \cap E_0^{\sta},\quad \mathcal{T}_k = \mathcal{F}_{k-1} 
%		\cap \left(E_0^{\sta} \cap \dots \cap E_k^{\sta}\right) \, \forall k \ge 1,
%	\end{equation}
	where we use $\mathcal{F} \cap E$ to denote the trace $\sigma$-algebra of the event $E$ on the 
	$\sigma$-algebra $\mathcal{F}$, i.e., $\mathcal{F} \cap E = \{E \cap F \ :\ F\in\mathcal{F}\}$. 
	For every $k\ge 0$ and any event $A$, we thus have $\mathcal{T}_k \subset \mathcal{T}_{k+1}$ and (by the same argument used to 
	establish~\eqref{eq:inclusionsigalg})\revised{, we obtain}
%	\[
%		\E{A | \mathcal{T}_k} = \E{\E{A| \mathcal{F}_{k},
%		\revised{(\mathcal{E}_k^1\cap \mathcal{E}_{k+1}^1)\cup\mathcal{E}_{k}^2)}}|\mathcal{T}_k}.
%	\] 
	\[
		\E{A | \mathcal{T}_k} = \E{\E{A| \mathcal{F}_{k-1},
		\revised{\mathcal{E}_k^1\cup\mathcal{E}_{k}^2)}}|\mathcal{T}_k}.
	\] 
	
	\revised{
	If $T_{\epsilon}=\infty$, then the assumptions of Theorem~\ref{th:gendecrease} are satisfied 
	for all iterations $k$. In particular, for all $k$, the events $\mathcal{E}_k^1$, $\mathcal{E}_{k+1}^1$ 
	and $\mathcal{E}_k^2$ occur. Moreover,  under the event $I_k$, $\mathcal{E}^i_k$ and $E^i_k$, $i=1,2$ occur also and are equivalent, hence 
	$$
	 \mathbb{P}\left(\overline{E^+_{k}} |
		\mathcal{F}_{k-1},E^{1}_k\cup E_k^{2},I_k\right)=1 -\mathbb{P}\left(E^+_{k}
		|\ \mathcal{F}_{k-1},E^{1}_k\cup E_k^{2},I_k\right)=0 
	$$
	Thus, from Theorem~\ref{th:gendecrease}, one gets
%	\begin{eqnarray*}
%		& &\mathbb{E}\left[f(x_{k+1}) - f(x_k)\ |\ \mathcal{E}^1_k\cup\mathcal{E}^2_k, \mathcal{F}_{k-1}\right]
%		\nonumber\\ 
%		& \le &-c_{\epsilon} \mathbb{P}\left(E^+_{k}
%		|\ \mathcal{F}_{k-1},E^{1}_k\cup E_k^{2},I_k\right)
%		+\frac{\eta c^3}{12}\epsilon^{3/2} \mathbb{P}\left(\overline{E^+_{k}} |
%		\mathcal{F}_{k-1},E^{1}_k\cup E_k^{2},I_k\right) \\ 
%		& &+ \left(1-\pi_k\right) U_L
%	 	\mathbb{P}\left(\overline{E^+_{k}}
%		|\ \mathcal{F}_{k-1},E^{1}_k\cup E_k^{2}\right).
%	\end{eqnarray*}
			\begin{eqnarray*}
	\mathbb{E}\left[f(x_k+\alpha_k d_k) - f(x_k)\ |\  \mathcal{F}_{k-1}, \mathcal{E}^1_k\cup\mathcal{E}^2_k\right]	&\le & -c_{\epsilon}.
	\end{eqnarray*}
	Thus, since $x_k \in \mathcal{T}_k$, we obtain
		\[ \E{f(x_{k+1})\ |\ \mathcal{T}_k} +  c_{\epsilon} \le 
		 f(x_k),
	\]
%        Since under $I_k$, $\mathcal{E}^i_k$ and $E^i_k$, $i=1,2$ are equivalent, the probability of $I_k$ is bounded from below, either
%        $T_{\epsilon}=\infty$ must occur with probability zero, or $\lim\sup\limits_{k\to\infty}  \mathbb{P}\left(\overline{E^+_{k}} |
%		\mathcal{F}_{k-1},E^{1}_k\cup E_k^{2},I_k\right)=\lim\sup\limits_{k\to\infty}  \mathbb{P}\left(\overline{E^+_{k}}
%		|\ \mathcal{F}_{k-1},E^{1}_k\cup E_k^{2}\right)=0$ and so for any $\delta>0$ there is large enough $k$ such that,
%	$\E{f(x_{k+1})-f(x_k)\ |\ \mathcal{F}_{k-1},\mathcal{E}^1_k\cup \mathcal{E}^2_k} \le -c_{\epsilon}+\delta$.
%%        \crnote{If we are able to consider this conditioning 
%%	here, then we should be able to do it in the proof of Theorem~\ref{th:gendecrease}...}
%	Recalling the definition of $\mathcal{T}_k$ (as given by~\eqref{eq:filtrationwcc}) and since $\delta$ is arbitrary, one gets for  a large enough $k$,
	%\begin{equation*} 
%	%\label{eq:stoppingtime0infcase}
%		\E{f(x_{k+1})-f(x_k)\ |\ \mathcal{T}_k} \; = \; 
%		\E{\E{f(x_{k+1})-f(x_k)\ |\ \mathcal{F}_{k-1},\revised{E_k^1,E_k^2,E_{k+1}^1}}
%		\ |\ \mathcal{T}_k} \; \le \;
%		\E{-c_{\epsilon}\ |\ \mathcal{T}_k} = -c_{\epsilon}.
%	\end{equation*}
%	Thus,
%	\[
%		\E{f(x_{k+1})-f(x_k)\ |\ \mathcal{T}_k} \le -\frac 12 c_{\epsilon} \quad 
%		\Leftrightarrow \quad \E{f(x_{k+1})\ |\ \mathcal{T}_k} + \frac 12  c_{\epsilon} \le 
%		 \E{f(x_k)\ |\ \mathcal{T}_k} = f(x_k),
%	\]
%	where the last equality holds because $x_k \in \mathcal{T}_k$. 
	Subtracting $f_{\low}$ on both sides, we obtain
	\begin{equation} \label{eq:stoppingtime0flowdec}
		\E{f(x_{k+1})-f_{\low}\ |\ \mathcal{T}_k} +  c_{\epsilon} \; \le \; f(x_k)-f_{\low}.
	\end{equation}
	}
	
	\revised{
	As a result, we can apply Theorem~\ref{th:mart} with $\alpha^k = f(x_k)-f_{\low} \ge 0$, 
	$\Sigma^k = \mathcal{T}_k$ and $\nu^k=c_{\epsilon} > 0$: we thus obtain that 
	$\sum_{k=0}^{\infty} c_{\epsilon} < \infty$, which is obviously false. This implies that 
	$T_{\epsilon}$ must be finite almost surely.
	}

	Consider now the sequence of random variables given by,
	\[
		R_k= f(x_{\min(k,T_{\epsilon})})+\max\left(\min(k,T_{\epsilon})-1,0\right) c_{\epsilon}.
	\]
	\revised{For any $k<T_{\epsilon}$, we have occurrence of 
	$\mathcal{E}^1_k\cup \mathcal{E}^2_k$. As in the proof of Theorem~\ref{th:gendecrease}, we 
	use the fact that $(I_k,E^1_k\cup E^2_k)$ and $(I_k,\mathcal{E}^1_k\cup \mathcal{E}^2_k)$ have 
	the same logical value for $k<T_{\epsilon}$, hence 
	$\mathbb{P}\left(\overline{E^+_{k}} |
	\mathcal{F}_{k-1},E^{1}_k\cup E_k^{2},I_k, k<T_{\epsilon}\right)=0$. This leads to
	\begin{eqnarray*}
		\mathbb{E}[R_{k+1}\ |\ \mathcal{T}_k, k<T_{\epsilon}]  
		&= 
		\mathbb{E}[R_{k+1}\ |\ \mathcal{F}_{k-1},\mathcal{E}^1_k\cup\mathcal{E}^2_k, k<T_{\epsilon}] 
		= 
		\mathbb{E}[f(x_{k+1})\ |\ \mathcal{T}_k]+k\,c_{\epsilon} 
		\le f(x_{k})-c_{\epsilon}+k\,c_{\epsilon} \le R_k.
	\end{eqnarray*}
	Therefore, $\mathbb{E}[R_{k+1}\ |\ \mathcal{T}_k, k<T_{\epsilon}] \le R_k$ 
	while $R_{k+1}=R_k$ when $k\ge T_{\epsilon}$, implying that $R_k$ is a supermartingale.
	Moreover, this supermartingale has bounded expected increments, since}
	% since in $\mathcal{T}_k$, it holds that $T_\epsilon\ge k+1$,
	\begin{eqnarray*}
		\E{|R_{k+1}-R_k|\ |\ \mathcal{T}_k}  & = &\; 
		\E{|f(x_{k+1})+(k+1)c_{\epsilon}-f(x_k)-kc_{\epsilon}|\ |\ \mathcal{T}_k} \\
		& \le & \; \E{|f(x_{k+1})-f(x_k)|+c_{\epsilon}\ |\ \mathcal{T}_k} 
		\le c_{\epsilon}+\max\left(c_{\epsilon},f_{\text{max}}-f_{\text{low}}\right) <\infty
	\end{eqnarray*}
	\revised{Noting that} $T_{\epsilon} < \infty$ 
	almost surely, we satisfy the assumptions of Theorem~\ref{th:stop}: it thus holds that 
	$\mathbb{E}[R_{T_{\epsilon}}]\le \mathbb{E}[R_0]$, \revised{leading to}
	\[
		f_{\low} + (\E{T_{\epsilon}}-1)c_{\epsilon} \le \E{R_{T_{\epsilon}}} 
		\le \mathbb{E}[R_0] = f(x_0).
	\]
	\revised{Re-arranging the terms leads to}
	\[
		\E{T_\epsilon}\; \le \frac{\left(f(x_0)- f_{\low}\right)}{c_{\epsilon}}+1,
	\]
	which is the desired result.
\endproof

The result of Theorem~\ref{th:complexity0} gives a worst-case complexity bound on the expected number of
iterations until a function stationarity point is reached. This does not provide a practical stopping 
criterion for the algorithm, because only \emph{model} stationarity can be tested for during an 
algorithmic run (even though in the case of $\skn_k=p=1$, both notions of stationarity are 
equivalent). Still, by combining Theorem~\ref{th:complexity0} and 
Lemma~\ref{lemma:modeltruestationarity}, we can show that after at most 
$\mathcal{O}(p\epsilon^{-3/2})$ iterations on average, if the model is accurate, 
then the corresponding iterate will be function stationary. 

In our algorithm, we assume that accuracy is only guaranteed with a certain probability at 
every iteration. As a result, stopping after encountering an iterate that is model stationary 
only comes with a weak guarantee of returning a point that is function stationary. In 
developing an appropriate stopping criterion, we wish to avoid such ``false positives".
To this end, one possibility consists of requiring model stationarity to 
be satisfied for a certain number of successive iterations. Our approach is motivated by the 
following result\revised{, proved in the online companion.}

\begin{proposition}\label{prop:stop} 
Under the assumptions of Theorem~\ref{th:complexity0}, suppose that Algorithm~\ref{alg:alas} 
reaches an iteration index $k+J$ such that for every $j \in \{k,k+1,\dots,k+J\}$, 
$
	\min\left\{\|g_j\|,\|g_j^+\|\right\} \le \epsilon  \quad \mathrm{and} \quad \lambda_j \ge -\epsilon^{1/2}. 
$
Suppose further that $\delta_g$ and $\delta_H$ satisfy~\eqref{eq:conddeltagdeltah}, 
and that the sample sizes are selected independently of the current iterate.
Then, with probability at least $1-(1-p)^{J+1}$, where $p$ is the lower bound on $p_k$ given by 
Assumption~\ref{as:accurate:in:prob}, one of the iterates $\{x_k,x_{k+1},\dots,x_{k+J}\}$ is 
$((1+\ccg)\epsilon,(1+\cch)\epsilon^{1/2})$-function stationary.
\end{proposition}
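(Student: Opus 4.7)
The plan is to combine Lemma~\ref{lemma:modeltruestationarity} with a short martingale-free probability bound on how often $J+1$ consecutive models can all fail to be accurate. The crucial observation is that Lemma~\ref{lemma:modeltruestationarity}, applied at any single iteration $j \in \{k,\dots,k+J\}$, already yields the conclusion: because the hypothesis tells us each such $x_j$ is $(\epsilon,\epsilon^{1/2})$-model stationary, the extra ingredient needed is the model $m_j$ being $(\delta_f,\delta_g,\delta_H)$-accurate at that same $j$. Once this happens for at least one $j$, that iterate is $((1+\ccg)\epsilon,(1+\cch)\epsilon^{1/2})$-function stationary, so the entire statement reduces to lower-bounding the probability that at least one of the events $I_k,\dots,I_{k+J}$ from Definition~\ref{def:accurate:proba} occurs.

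Equivalently, I would upper-bound $\Pr(\overline{I_k}\cap\dots\cap\overline{I_{k+J}})$. By Assumption~\ref{as:accurate:in:prob}, we have $\Pr(I_j\mid\mathcal{F}_{j-1})\ge p$, hence $\Pr(\overline{I_j}\mid\mathcal{F}_{j-1})\le 1-p$; the assumption that the sample sizes are chosen independently of the current iterate is what guarantees this bound is not eroded by the extra structure imposed on the $x_j$. Since $\mathbf{1}(\overline{I_k}\cap\dots\cap\overline{I_{k+J-1}})$ is $\mathcal{F}_{k+J-1}$-measurable, the tower property gives
\[
\Pr\!\left(\bigcap_{j=k}^{k+J}\overline{I_j}\right) = \mathbb{E}\!\left[\mathbf{1}\!\left(\bigcap_{j=k}^{k+J-1}\overline{I_j}\right)\Pr(\overline{I_{k+J}}\mid\mathcal{F}_{k+J-1})\right] \le (1-p)\,\Pr\!\left(\bigcap_{j=k}^{k+J-1}\overline{I_j}\right),
\]
and iterating the same argument $J+1$ times produces the bound $(1-p)^{J+1}$.

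Putting the two pieces together: on the complementary event, which has probability at least $1-(1-p)^{J+1}$, there is some index $j\in\{k,\dots,k+J\}$ for which $m_j$ is accurate. Because $x_j$ is model stationary by hypothesis and the accuracy thresholds $\delta_g,\delta_H$ satisfy~\eqref{eq:conddeltagdeltah}, Lemma~\ref{lemma:modeltruestationarity} applies and shows that this $x_j$ is $((1+\ccg)\epsilon,(1+\cch)\epsilon^{1/2})$-function stationary, which is the desired conclusion.

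The main obstacle I anticipate is conceptual rather than computational: being careful about conditioning. The event that all $J+1$ consecutive iterates are model stationary is \emph{not} an element of $\mathcal{F}_{k-1}$, since the quantities $g_j$, $g_j^+$ and $\lambda_j$ defining model stationarity are determined by the random samples $\mathcal{S}_j$ themselves. Without the assumption that sample sizes do not depend on the current iterate, one would have to worry that conditioning on the algorithm actually visiting $J+1$ model stationary points might bias the conditional probability of accurate samples below $p$; with that assumption in hand, the clean telescoping argument above is legitimate.
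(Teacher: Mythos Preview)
Your proposal is correct and matches the paper's approach: reduce via Lemma~\ref{lemma:modeltruestationarity} to bounding $\Pr\bigl(\bigcap_{j}\overline{I_j}\bigr)\le(1-p)^{J+1}$, then telescope using the tower property together with Assumption~\ref{as:accurate:in:prob}. The paper's version differs only in that it threads the conditioning on the model-stationarity events $E_0,\dots,E_J$ explicitly through the recursion (invoking conditional independence of $E_j$ and $I_{k+j}$ given $\mathcal{F}_{k+j-1}$) rather than deferring that point to a closing remark as you do, but the substance is the same.
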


The result of Proposition~\ref{prop:stop} is only of interest if we can ensure that 
such a sequence of model stationary iterates can occur in a bounded number of iterations. This will be 
the case provided we reject iterates for which we cannot certify sufficient decrease, hence the following 
additional assumption.

\revised{
\begin{assumption} \label{as:modifalgo}
	In Algorithm~\ref{alg:alas}, Step 7 is replaced by:\\
	\emph{7'. If $\min\{\|g_k\|,\|g_k^+\|\} < \epsilon$ and $\lambda_k > -\epsilon^{1/2}$, set 
	$x_{k+1}=x_k$, otherwise set $x_{k+1}=x_k+\alpha_k\,d_k$.}\\
	Moreover, the sample size is selected independently of the current iterate so 
	that $\Pr\left(I_k | \mathcal{F}_{k-1}\right) = p$ where $p$ is defined in 
	Assumption~\ref{as:accurate:in:prob}.
\end{assumption}
}

This algorithmic change allows us to measure stationarity at a given iterate based on a series of 
samples. Note that our method now requires two gradient evaluations, but that those involve 
the same sample. Under a slightly stronger set of assumptions, Proposition~\ref{prop:complexityJ} then 
guarantees that sequences of model stationary points of arbitrary length will occur in expectation.
\revised{The proof of this result can be found in the online companion.}

\begin{proposition}\label{prop:complexityJ}
Let Assumptions \ref{as:f:lower}, \ref{as:f:H}, \ref{as:accurate:in:prob} and 
\ref{as:modifalgo} hold,
where $\delta$ satisfies~\eqref{eq:bounddeltaepsgendecrease} 
with $\kappa_g,\kappa_H \in (0,1)$, and $p_k=p\ \forall k$. 
For a given $J \in \mathbb{N}$, define $\Tj^m$ as the first iteration index of 
Algorithm~\ref{alg:alas} for which 
\begin{equation} \label{eq:Jp1modelstationarity}
	\min\{\|g_k\|,\|g^+_k\|\} < \epsilon  \quad \mbox{and} \quad \lambda_k > -\epsilon^{1/2}, \quad
	\forall k \in \{\Tj^m,\Tj^m+1,...,\Tj^m+J\}.
\end{equation}
Suppose finally that for every index $k$, the sample size $\skn_k$ satisfies
\begin{equation} \label{eq:Jcomplexitysamplecond}
	\forall k, \quad \skn_k \ge \pfun(c\hat{\epsilon}^{1/2},p),
\end{equation}
where $\hat{\epsilon} = \min\{\tfrac{1-\kappa_g}{1+\kappa_g},
\tfrac{(1-\kappa_H)^2}{(1+\kappa_H)^2}\}\epsilon$. 
(Note that $\pfun(c\hat{\epsilon}^{1/2},p) \ge \pfun(c\epsilon^{1/2},p)$.)

Then, $\Tj^m<\infty$ almost surely, and 
\begin{equation} \label{eq:complexityJ}
	\mathbb{E}[\Tj^m] \; \le \; \frac{\left(f(x_0)- f_{\low}\right)}{c_{\hat{\eps}}}+J+1 ~~~\mbox{where}~	
%	c_{\hat{\eps}} :=\left\{
%		\begin{array}{ll}
%		\hat{c}\epsilon^{3/2}
%		&\mbox{if $\skn_k=1\ \forall k$}; \\
%		 & \\
%		\hat{c} \pfun(c\hat{\epsilon}^{1/2},p)\hat{\epsilon}^{3/2}
%		&\mbox{otherwise}.
%		\end{array}	
%	\right.
	c_{\hat{\eps}} = p  \hat{c} \hat{\eps}^{3/2} ~~\mbox{and}~~\hat{c}=\tfrac{\eta}{24}c^3.
\end{equation}
%where 
%\[
%	c_{\hat{\eps}} :=\left\{
%		\begin{array}{ll}
%		\hat{c}\epsilon^{3/2}
%		&\mbox{if $\skn_k=1\ \forall k$}; \\
%		 & \\
%		\hat{c} \pfun(c\hat{\epsilon}^{1/2},p)\hat{\epsilon}^{3/2}
%		&\mbox{otherwise}.
%		\end{array}	
%	\right.
%\]
\end{proposition}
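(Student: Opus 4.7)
The plan is to combine the expected-decrease machinery of Theorem~\ref{th:gendecrease} and the stopping-time analysis of Theorem~\ref{th:complexity0} with a geometric-trials argument that accumulates $J+1$ consecutive model-stationary iterations.

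The key preliminary observation is that $\hat\epsilon$ has been calibrated to yield a converse of Lemma~\ref{lemma:modeltruestationarity}: if $x_k$ is $((1+\kappa_g)\hat\epsilon,(1+\kappa_H)\hat\epsilon^{1/2})$-function stationary and $m_k$ is $\delta$-accurate with $\delta_g \le \kappa_g\epsilon$ and $\delta_H \le \kappa_H\epsilon^{1/2}$, then triangle inequalities such as $\|g_k\| \le \|\nabla f(x_k)\| + \delta_g \le (1+\kappa_g)\hat\epsilon + \kappa_g\epsilon \le \epsilon$ (using $\hat\epsilon \le \tfrac{1-\kappa_g}{1+\kappa_g}\epsilon$), and the analogous bounds for $\|g_k^+\|$ and for $\lambda_k$, force $x_k$ to satisfy the strict model stationarity condition~\eqref{eq:Jp1modelstationarity}. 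Under Assumption~\ref{as:modifalgo} the sample is drawn independently of $x_k$, and $x_k$ remains frozen across model-stationary iterations, so once the method reaches an $\hat\epsilon$-function stationary iterate the probability that the next $J+1$ samples are all accurate---and therefore all witness the required strict model stationarity at the frozen iterate---is at least $p^{J+1}$, by the conditional-independence argument already used in the proof of Proposition~\ref{prop:stop}.

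To bound the expected time until such an $\hat\epsilon$-function stationary iterate is first produced, I would adapt the proof of Theorem~\ref{th:complexity0} to the tighter tolerance $\hat\epsilon$. The sample-size hypothesis~\eqref{eq:Jcomplexitysamplecond} is precisely what is needed to instantiate Theorem~\ref{th:gendecrease} at tolerance $\hat\epsilon$, yielding per-iteration expected decrease $c_{\hat\epsilon} := \hat c\,\pfun(c\hat\epsilon^{1/2},p)\hat\epsilon^{3/2}$ whenever $x_k$ is not $\hat\epsilon$-function stationary. Combining this with the martingale and stopping-time tools of Theorems~\ref{th:mart} and~\ref{th:stop}, in a replay of the proof of Theorem~\ref{th:complexity0}, bounds the expected hitting time by $(f(x_0)-f_{\low})/c_{\hat\epsilon} + 1$.

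Finally, I would assemble the two parts into a geometric-trials argument. A ``trial'' consists of running the method until an $\hat\epsilon$-function stationary iterate is produced and then testing the subsequent $J+1$ iterations for strict model stationarity. Each trial has expected length at most $(f(x_0)-f_{\low})/c_{\hat\epsilon} + J + 1$ and succeeds with probability at least $p^{J+1}$, so summing the resulting geometric series in the number of trials delivers the claimed bound. The deterministic regime $\skn_k=1$ corresponds to $p=1$, which collapses the geometric factor and reduces $\pfun$ to $1$; moreover in this regime model and function stationarity coincide, so $\hat\epsilon$ can be replaced by $\epsilon$ in the bound, recovering the sharper deterministic expression. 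The main technical obstacle is bounding the restart cost across trial boundaries: a failed trial may have displaced the iterate, so the martingale argument must be framed with a filtration that straddles trials, and the global lower bound $f\ge f_{\low}$ must be invoked to ensure that the total expected decrease accumulated across all restarts remains finite and controlled.
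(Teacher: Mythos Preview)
Your proposal is correct and follows essentially the same route as the paper: the calibration of $\hat\epsilon$ to transfer function stationarity to strict model stationarity under an accurate sample, the application of Theorem~\ref{th:complexity0} at tolerance $\hat\epsilon$, and a Wald/geometric-trials argument combining the expected hitting time with the $p^{J+1}$ success probability are exactly the ingredients of the paper's proof. The restart-cost issue you flag is handled in the paper by bounding the expected gaps $\mathbb{E}[T_{\hat\epsilon}^{(i+1)}-T_{\hat\epsilon}^{(i)}]$ uniformly by the same $(f(x_0)-f_{\low})/c_{\hat\epsilon}+1$ quantity and then invoking Wald's identity on a subsequence of function-stationary times spaced at least $J$ apart, so that the associated accuracy events are independent Bernoulli trials.
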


With the result of Proposition~\ref{prop:complexityJ}, we are guaranteed that there will exist 
consecutive iterations satisfying model stationarity in expectation. Checking for stationarity over 
successive iterations thus represents a valid stopping criterion in practice. If an estimate of the 
probability $p$ is known, one can even choose $J$ to guarantee that the probability of computing a 
stationary iterate is sufficiently high.

%%%%%%%%%%%%%%%%%%%%%%%%%%%%%%%%%%%%%%%%%%%%%%%%%%%%%%%%%%%%%%%%%%%%%%%%%%%%%%%
\revised{
To end this section, we establish a bound 
in expectation on the number of evaluations of $f_i$ needed to reach 
a stationary point. Note 
that we must account for the additional objective evaluations induced by the 
line-search process (see~\cite[Theorem 8]{royer2018complexity} for details).
}

%\revised{
\begin{corollary} \label{coro:evalcomplexityJevals}
Let Assumptions \ref{as:f:lower}, \ref{as:f:H}, \ref{as:accurate:in:prob} and 
\ref{as:modifalgo} hold,
where $\delta$ satisfies~\eqref{eq:bounddeltaepsgendecrease} 
with $\kappa_g,\kappa_H \in (0,1)$, and $p_k=p$ for every $k$. Suppose
also that
$\skn_k \ge \pfun(c\hat{\epsilon}^{1/2},p)$, then the expected 
number of evaluations of $\nabla f_i$ and $\nabla^2 f_i$ are respectively 
bounded above by 
\begin{equation} \label{eq:evalcomplexityJderivatives}
%	3 \times\left\{
%	\begin{array}{ll}
%		\frac{\left(f(x_0)- f_{\low}\right)}{\hat{c}}\epsilon^{-3/2}+1 
%		&\mbox{if $\skn_k=1\ \forall k$}; \\
%		& \\
%		p^{-(J+1)}\left[\frac{\left(f(x_0)- f_{\low}\right)}{\hat{c}} \
%		\pfun(c\hat{\epsilon}^{1/2},p)^{-1}\,\hat{\epsilon}^{-3/2}+J+1 \right]
%		&\mbox{otherwise},
%	\end{array}
%	\right.	
\frac{3 \left(f(x_0)- f_{\low}\right)}{p\hat{c}}\epsilon^{-3/2}+1 
\end{equation}
while the expected number of function evaluations is bounded above by
$$
(1+\max\{j_{nc},j_n,j_{rn}\}) p^{-(J+1)}\left[\frac{\left(f(x_0)- f_{\low}\right)}{p\hat{c}} 
		\,\hat{\epsilon}^{-3/2}+J+1 \right].	
$$
%\begin{equation} \label{eq:evalcomplexityJfunction}
%	(1+\max\{j_{nc},j_n,j_{rn}\}) \times \left\{
%	\begin{array}{ll}
%		\frac{\left(f(x_0)- f_{\low}\right)}{\hat{c}}\epsilon^{-3/2}+1 
%		&\mbox{if $\skn_k=1\ \forall k$}; \\
%		& \\
%		p^{-(J+1)}\left[\frac{\left(f(x_0)- f_{\low}\right)}{p\hat{c}} 
%		\,\hat{\epsilon}^{-3/2}+J+1 \right]		
%		&\mbox{otherwise}.
%	\end{array}
%	\right.	
%\end{equation}
\end{corollary}
%}
%The bounds~\eqref{eq:evalcomplexityJderivatives} and~\eqref{eq:evalcomplexityJfunction} match their deterministic counterparts (see~\cite{royer2018complexity}) in the worst case.

\subsection{Sample and evaluation complexity for uniform sampling} 
\label{sec:samplewcc}

\subsubsection{Comparison with the deterministic line-search method:}

In general, we can consider that at each iteration we must perform $|\mathcal{S}_k|n$ computations to 
evaluate the sample gradient, $|\mathcal{S}_k|n^2$ to compute the sample Hessian, and evaluate a function 
$\bar{j}|\mathcal{S}_k|$ times during the line search. Per Lemma~\ref{lm:bound:alpha_kd_k}, the maximum 
number of line-search iterations $\bar{j}$ can be considered of the order $\log (1/\epsilon)$. 
As a result, each iteration requires $|\mathcal{S}_k|(n+n^2+\log(1/\epsilon))$ computations. 
For our sampling rate to yield improvement over the deterministic (fully sampled) method, we 
require $N\log(1/\epsilon)$ to be much larger than $|\mathcal{S}_k|n^2$, implying that the 
appropriate regime for this algorithm is one where the number of variables is considerably less than 
the number of data points. 

To illustrate the theoretical results, we now comment on the two main requirements made in the analysis of 
the Section 4 are related to the function value accuracy $\delta_f$ and the sample size $\skn_k$. 
More precisely, for a given tolerance $\epsilon$, we required in~Theorem~\ref{th:complexity0} 
that:
\begin{enumerate}
\item $\delta_f \le \tfrac{\eta}{24}c^3\epsilon^{3/2},\ \delta_g \le \ccg \epsilon,\ \delta_H \le \cch \epsilon^{1/2}$.
\item $\skn_k \; \ge \; \pfun(c\epsilon^{1/2},p). $  
%\; = \; 
%\frac{(1-p)\left(U_g\max\{U_H,U_g^{1/2}\}+\frac{L}{2}\max\{U^2_h,U_g\} \right)}{(1-p)
%\left(U_g\max\{U_H,U_g^{1/2}\}+\frac{L}{2}\max\{U^2_h,U_g\}\right)
%+ p \tfrac{\eta}{24}c^3\epsilon^{3/2}}$.
\end{enumerate}

In this section, we provide estimates of the minimum number of samples necessary to achieve those two 
conditions in the case of a uniform sampling strategy. To facilitate the exposure, we discard the 
case $p=1$, for which the properties above trivially hold, and focus on $p \in (0,1)$. Although we 
focus on the properties required for Theorem~\ref{th:complexity0}, note that a similar analysis holds for the requirements of~Proposition~\ref{prop:complexityJ}.

For the rest of the section, we suppose that the set $\mathcal{S}_k$ is formed of $n\bar{\skn}$ 
indexes chosen uniformly at random with replacement. where $\bar{\skn}$ is independent of $k$. 
That is, for every $i \in \mathcal{S}_k$ and every $j=1,\dots,N$, we have $\Pr(i=j) = \frac{1}{N}$.
This case has been well studied in the case of subsampled Hessian~\cite{xu2019newton} and 
subsampled gradient~\cite{roosta2019subsampled}. The next theorem derives the result regarding the required conditions on $\skn_n$ to ensure the event $I_k$. The theorem uses standard arguments (e.g., ~\cite[Lemma 16]{xu2019newton} and ~\cite[Lemma 2]{roosta2016subsampled}); for completeness, its proof is given in the appendix.
\begin{theorem} \label{theo:suffsampleunif}
	Let~Assumptions 1 and 2 hold. For any $p \in (0,1)$, let $\hat p= \frac{p+3}{4}$. Suppose that 
	the sample fractions $\skn_k$ of Algorithm 1 are chosen to satisfy 
	$\skn_k \ge \skn(\epsilon)$ for every $k$, where
	\begin{equation*} \label{eq:suffsampleunif}
		\skn(\epsilon) {:=} \frac{1}{N}\max\left\{
		N\,\pfun(c\epsilon^{1/2},\hat p), 
		\frac{9216 f_{\up}^2}{\eta^2 c^6\epsilon^{3}}\ln\left(\tfrac{2}{1-\hat p}\right),		
		\frac{U_g^2}{\ccg^2\epsilon^2}\left[1+\sqrt{8\ln\left(\tfrac{1}{1-\hat p}\right)}\right]^2,
		\frac{16L^2}{\cch^2\epsilon}\ln\left(\tfrac{2N}{1-\hat p}\right)\right\}.
	\end{equation*}
	Then, the model sequence is $p$-probabilistically  
	$(\delta_f,\delta_g,\delta_H)$-accurate with $\delta_f=\tfrac{\eta}{24}c^3\epsilon^{3/2}$, 
	$\delta_g=\ccg\epsilon$ and $\delta_H=\cch\epsilon^{1/2}$. Moreover, all the results from~Section~\ref{subsec:cvwcc:iterwcc} hold.
%~\ref{subsec:cvwcc:iterwcc} 
\end{theorem}

We observe that explicit computation of these bounds would require estimating $L, L_H, U_H,$ and 
$U_g$. If the orders of magnitude for the aforementioned quantities are available, they can be 
used for choosing the sample size. In addition, note that the bound presented is global, i.e., it should hold for
every iterate $k$, however, it is also possible to consider these problem constants as they hold over
a region, and thus when $x_k$ is in this region $\skn_k$ can be potentially adjusted accordingly
in an adaptive fashion, if some local estimates for these constants could be made available. 

\subsubsection{Comparison with other sample complexities:}

By comparing it with sample complexity results available in the literature, we can position our method 
within the existing landscape of results, and get insight about the cost of second-order requirements, as 
well as that of using inexact function values.

When applied to nonconvex problems, a standard stochastic gradient approach with fixed step size has a 
complexity in $\mathcal{O}(\epsilon^{-4})$ (both in terms of iterations and gradient evaluations) for 
reaching approximate first-order optimality~\cite{ghadimi2013stoch}. Modified SGD methods that take 
curvature information can significantly improve over that bound, and even possess second-order guarantees. 

Typical guarantees are provided in terms of function stationarity (though this condition cannot be 
checked in practice), and hold with high probability.
Our results hold in expectation, but involve a model stationary condition that we can check at every 
iteration. Moreover, it is possible to convert a rate in expectation into 
high-probability rates (of same order), following for instance an argument used for trust-region 
algorithms with probabilistic models~\cite{gratton2018trprobamodels}.

It is also interesting to compare our complexity orders with those obtained 
by first-order methods in stochastic optimization. Stochastic trust-region 
methods~\cite{chen2018stochastic,larson2016stochtr} require $\mathcal{O}
\left(\Delta_k^{-4}\right)$ samples per iteration, where $\Delta_k$ is the 
trust-region radius and serves as an approximation of the norm of the 
gradient. The line-search algorithm of~\cite{paquette2018stochls} guarantees 
sufficient accuracy in the function values if the sample size is of order 
$\mathcal{O}\left(\alpha_k^{-2}\|g_k\|^{-4}\right)$ (we use our notations for 
consistency). By comparison our method requires $\mathcal{O}(\eps^{-3/2})$ 
samples, where $\eps^{-3/2}$ is used as a proxy for $\alpha_k^{-3}\|d_k\|^3$. 
Our sample complexity can thus be higher than that of other methods, in that 
it does not depend on the iteration level. On the other hand, this makes our 
approach well-defined, and enables the derivation of second-order guarantees, 
while previously proposed methods such as that of Paquette and 
Scheinberg~\cite{paquette2018stochls} is only concerned with first-order 
guarantees.
%one can conjecture that a version of this method with second-order guarantees in expectation would require more samples to produce accurate second-order steps.
%increase the sample 
%complexity from $\mathcal{O}\left((\alpha_k\|d_k\|)^{-2}\right)$ to 
%$\mathcal{O}\left((\alpha_k\|d_k\|)^{-3}\right)$.

Finally, we discuss sample bounds for Newton-based methods in a subsampling 
context. In~\cite{xu2019newton}, it was shown that the desired complexity 
rate of $\mathcal{O}(\epsilon^{-3/2})$ is achieved with probability $1-p$, 
if for every $k$, the sample size (for approximating the Hessian) satisfies:
\[
|\mathcal{S}_k|\ge \frac{16 U^2_H}{\epsilon}\ln\left(\frac{2N}{p}\right)
\]
The same result is used in~\cite{liu2017noisy}. In both cases, the 
\revised{sample sizes are inversely proportional to the desired tolerance 
squared.} 
\revised{
In~\cite{yao2018inexact}, high probability results were derived with the 
accuracy of the function gradient and Hessian estimates $\delta_g$ and 
$\delta_H$ being bounded by $\epsilon$ and $\epsilon^{1/2}$, respectively. 
The resulting sampling bounds are 
$\mathcal{O}(\epsilon^{-2})$.}
In our setting, we additionally require the function accuracy to be of order 
$\epsilon^{3/2}$, where $\epsilon$ is the gradient tolerance, yet the 
\revised{other dependencies on $\epsilon$ in terms of gradient and Hessian 
sampling matches previous bounds in the case of exact function values 
and subsampled derivatives~\cite{xu2019newton,yao2018inexact}. In that 
sense, we can identify the cost induced by assuming that only inexact 
function values are available.} 
%they are even identical in terms of gradient and 
%Hessian sampling costs. As mentioned above, the sampling rate conditions are closely related to the norm 
%of the steps. This connection was used to define probabilistic models in trust-region 
%methods~\cite{bandeira2014trproba}. A version of our algorithm that would assume access to 
%exact function values but subsampling derivatives would have the same sample complexity as the 
%aforementioned methods~\cite{xu2019newton,yao2018inexact}: our analysis and discussion regarding 
%stationarity would remain relevant in this context, because of our use of inexact derivatives.
%~\ref{lemma:modeltruestationarity}

\revised{Finally, we observe that our sample size requirements are proportional to $n^2$, while 
the upper bound $U_g$ on the gradients can also grow with $n$.} Thus, unless the setting is a regime 
with a very large number of data samples and a relatively small number of variables, the computed 
required sampling size could approach $N$. It appears that this
is an issue associated with the required worst-case sampling rates for higher-order sampling algorithms
in general, given the literature described above. To the best of our knowledge, tightening these 
bounds in a general setting remains an open area of research, even though improved sample 
complexities can be obtained by exploiting various specific problem structures. 
\revised{Though out of the scope of this work, we remark that practical approaches use 
less samples to approximate second-order information than they use to approximate 
first-order information~\cite{byrd2011stochhessian}.}

\section{Numerical experiments} 
\label{sec:numerics}
%%%%%%%%%%%%%%%%%%%%%%%%%%%%%%%%%%%%%%%%%%%%%%%%%%%%%%%%%%%%%%%%%%%%%%%%%%%%%%%
In this section, we present a numerical study of an implementation of our proposed framework on 
several machine learning tasks involving both real and simulated data. Our goal is 
to advocate for the use of second-order methods, such as the one described in 
this paper, for certain training problems with a nonconvex optimization 
landscape. We are particularly interested in highlighting notable advantages of going 
beyond first-order algorithms, which are still the preferred approaches in most machine 
learning problems. Indeed, first-order methods have lower computational complexity than 
second-order methods (that is, they require less work to compute a step at each iteration), 
but second-order methods possess better iteration complexity than first-order ones (they 
converge in fewer iterations). In many machine learning settings, lower computational 
complexity \revised{trumps} iteration complexity, and therefore first-order 
algorithms are viewed as superior. Nevertheless, our results suggest that second-order schemes can 
be competitive in certain contexts.

%Our results are meant to be illustrative, and as such, differ from an exhaustive numerical 
%investigation in a number of ways.
%First, as the number of
%optimization variables (model parameters in the machine learning model) increases,
%performing factorization and exact linear algebra solves as required by the algorithm
%to obtain a Newton step or negative curvature step becomes computationally infeasible.
%In such settings, iterative linear algebra, i.e. Hessian-free methods using Hessian-vector
%products, must be used to approximate these directions in practice. On the one hand, the computational 
%complexity of these approaches scale as $O(n s)$, with $n$ 
%being the dimension of the problem, and $s$ the number of Hessian-vector products, as opposed
%to $O(n^3)$ for full Hessian methods, thus making the step computations more tractable. 
%Several practical 
%approaches have already been proposed~\cite{oleary2019inexact,xu2018second}, yet those are not combined 
%with a stochastic line-search technique such as the one proposed in this paper. As a result, 
%introducing Hessian-free tools would require substantial changes to the convergence 
%analysis of our algorithm, complicating it significantly. 
We focus our experiments on architectures with 
a relatively small number of network parameters  (optimization variables). 
We point out that there exist important problems where the number of optimization variables $n$ is indeed small compared to the number of samples $N$. Besides logistic regression and shallow networks, the recent interest in formulating sparser architectures such as \texttt{efficientnet}~\cite{tan2019efficientnet}
for problems training huge amounts of data, in order to lighten memory loads in computation-heavy
and memory-light HPC hardware, indicates
the potential for increased applications of this sort in the future.

Although the most popular variants of SGD in practice such as 
{\tt{ADAM}}~\cite{kingma2014adam} incorporate advanced features, we believe that there is 
value in putting our approach in perspective with a simple framework emblematic of 
the first-order methods used in machine learning. For these reasons, we compare our 
implementation with a vanilla SGD method using a constant
step size (learning rate). We place ourselves in a setting in which the learning rate 
is \revised{tuned} among a set of predefined values corresponding 
to standard choices. Our goal is to highlight the sensitivity of such a method to this 
hyperparameter, and to compare it with our scheme for a given batch size.

\if\smallnumerics
\fi

%In this section we shall now update the Algorithm to be appropriate for larger scale settings, in particular for problems with a large $n$, number of parameters to be trained. Then we will test a python implementation of the algorithm on a set of appropriate test problems.
\subsection{Implementation}
\label{subsec:implement}

\revised{
We first describe the modifications to Algorithm~\ref{alg:alas} that we made in the implementation 
(which we identify as {\tt{ALAS}} in this section).
Algorithm~\ref{alg:alas} differed from the method of~\cite{royer2018complexity} in that we did not consider steps along the negative gradient direction, in order to simplify our theoretical analysis. We have re-introduced these steps in our implementation.
This change for practical performance has proper intuitive justification: if the negative gradient direction is already a good direction of negative curvature, or the Hessian is flat along its direction, then the negative gradient direction already enjoys second-order properties that ensure
it is a good descent direction for the function. In our experiments, these steps can be taken, but 
the vast majority of steps correspond to cases in Algorithm~\ref{alg:alas} (see Figure~\ref{fig:steps_IJCNN} and the online companion for an illustration of this behavior).
Secondly, we modified the condition to select a negative curvature step from 
$\lambda_k < -\epsilon^{1/2}$ to $\lambda_k < -\|g_k\|^{1/2}$. The former was necessary to handle the 
case of a stationary point for which $\|g_k^+\| < \epsilon \le \|g_k\|$ and $\lambda_k > - \eps^{1/2}$ in an appropriate fashion in the analysis of Section 3 in the main text, but this case was rarely encountered in practice.
Thirdly, we replaced the cubic decrease condition by a sufficient decrease quadratic in the norm of the step. This is again for performance reasons, and although theoretical results could be established using this condition, it does not appear possible to obtain the same dependencies in $\epsilon$ as given in~Theorem 4. We recall that the cubic decrease condition has been instrumental in deriving optimal iteration complexity bounds for Newton-type methods~\cite{cartis2011arccomplexity,royer2018complexity}.  The detailed description of  {\tt{ALAS}}  as implemented is provided
in the appendix.
}

In our implementation of {\tt{ALAS}}, we chose the values $\epsilon=10^{-5}$, $\eta = 10^{-2}$, and $\theta=0.9$. The ALAS framework and a standard stochastic gradient descent (SGD) algorithm were both implemented in Python 3.7. \revised{For fair comparison, we compare the performance of {\tt{ALAS}} and {\tt{SGD}} using the same batch size, or percentage of the dataset taken as samples in each iteration ($p_k$ in the theory)}. 
Various Python libraries were used, including JAX~\cite{jax2018github} for efficient compilation and automatic differentiation, as well as NumPy \cite{numpy}, SciPy \cite{scipy} and Pandas \cite{pandas}. \revised{The smallest eigenvalue $\lambda_k$  and the negative curvature step are computed using the ``scipy.sparse.linalg.eigs" routine.}
%
% JAX library \cite{jax2018github} which implements a just-in-time compilation (JIT) for most crucial steps of the algorithm and also allows for automatic differentiation of native python and NumPy \cite{numpy} functions. Other used libraries include matplotlib \cite{matplotlib}
% and seaborn \cite{seaborn} for plotting and scipy \cite{scipy} and pandas \cite{pandas} for data manipulation.
All our experiments were run using Intel Xeon CPU at 2.30~GHz and 25.51~GB of RAM, using a Linux distribution Ubuntu 18.04.3 LTS with Linux kernel 4.14.137.

\subsection{Classification on the IJCNN1 dataset}
\label{subsec:numIJCNN1}
We first tested our algorithms on a binary classification task for the IJCNN1 dataset (with $N=49,990$ 
samples and $n=22$ features per sample). Our goal was to train a neural network for binary classification. We used two different architectures: one had one hidden layer with 4 neurons, the 
other had two hidden layers with 4 neurons each. Both networks used the hyperbolic tangent activation 
function \smash{$\phi(x)=\tfrac{\exp(2x)-1}{\exp(2x)+1}$} and mean-squared error (MSE) loss, resulting in 
a twice continuously differentiable optimization problem, with a highly nonlinear objective. The results we present were obtained using samples corresponding to 20 \%, 10 \%,  5\% and 1\% of the dataset, with a runtime of 10 seconds. 
%\ebnote{ I suggest to remove this stopping criterion. It will be more interesting if we can include some experiments which take considerable time to converge, order of minutes or hours!}\vknote{Vladimir: Would this be possible? Say Transfer Learning on ImageNet? (to keep very large $N$ but only a few $n$. everyone loves ImageNet in ML these days) Or
%perhaps instead if that will take too much work, for the artificial dataset, make $N$ fifty million?)}\vnote{It is not just about the stopping criterion - the tasks we have converge well within the 10s limit. If we were to use bigger problems, then we could not compare it with the SGD as it would beat it significantly as the processing time of ALAS quickly grows with number of optimized variables. If you have some problem that is really hard to optimize and has a few variables then we could run it and show that the ALAS beats SGD. Otherwise it is still just the challenge of scaling the ALAS when the exact algebra is not scaling well. What I was doing with the transfer learning on the MNIST was to retrain only the top few layers and so I have actually precomputed the output of the layers before so I do not have to recompute the passes through the layers bellow it unnecessarily - maybe we could do something like that for the ImageNet}. 
The dataset was partitioned into disjoint samples of given sizes for each data pass (epoch) and these samples were randomly repartitioned every data pass.
The MSE loss is defined as
\begin{equation}
    l(\mathcal{B}) = \frac{1}{|\mathcal{B}|}\sum_{i \in \mathcal{B}} \left(y_i - \widehat{y_i}\right)^2,
\end{equation}
where $\mathcal{B}$ is the sampled dataset (mini-batch), $|\mathcal{B}|$ the size of the mini-batch,
$y_i \in \{-1, 1\}$ the actual binary label of the sample $i$  and $\widehat{y_i}$ is the predicted label of for the sample $i$.

We tested SGD with several possible values for the learning rate, namely {1, 0.6, 0.3, 0.1, 0.01, 0.001} 
\revised{(the details are available in the online companion), and selected the best variant for 
comparison with ALAS}. Note that the step size in ALAS is selected adaptively through a line search, which makes this method more robust. \revised{Figure~\ref{fig:IJCNN_nn_22_CI} shows the noise and robustness of the performance by reporting the median and 90$\%$ level confidence
interval for the trajectory over 200 runs for each batch size. Note that ALAS is fairly consistent, while the precision and variability of SGD can change in a complex way depending on the learning rate. Figure~\ref{fig:IJCNN_nn_22_acc} plots the training accuracy associated with the trajectories and their variation, which match what we observed on the loss plots.
} 

Figures~\ref{fig:IJCNN_nn_22-4-1} and~\ref{fig:IJCNN_nn_22-4-4-1} compare ALAS and SGD with the classical learning rate value for this task (0.01) on the two networks with 
one and two hidden layers, respectively.  
The results are summarized in Table~\ref{tab:alg_comparison}, where we compare ALAS with the best SGD variant and the standard variant with a learning rate of 0.01. To mitigate the iteration and overall cost of both algorithms, we compared the methods based on the minimal error reached over the last 10 seconds of the run and on the median loss over the last two seconds. In all the tables, the scalar in the parenthesis for SGD entries is the learning rate and the column \textit{median loss [8-10]s} shows the median loss over the last two seconds.
In general, ALAS outperforms SGD on the IJCNN1 task in terms of these metrics. \revised{Another interesting observation is the performance of ALAS appears to be fairly stable as its
trajectory appears monotonic: in fact, the trajectory can exhibit non-monotonic trends (see Section~\ref{subsec:numMNIST} and the online companion), but those are less prominent than for the 
SGD method. Our interpretation is that the ``bad case" of poor estimates yielding an innaccurate noisy step and more poor estimates resulting in its acceptance
is fairly uncommon in the stochastic setting because our model provides a high-order stochastic estimate of the function. On the contrary, SGD can easily yield ascent in the loss function if the gradient samples correspond to components that are far from their mean, i.e. the true gradient.}

%search iterations for few runs of the ALAS algorithm are shown in Figure~\ref{fig:line_search_hist_IJCNN}.
\revised{
We further study the behavior of \texttt{ALAS} by first looking at the types of steps taken 
by the method on selected runs for the IJCNN1 task, shown in Figure~\ref{fig:steps_IJCNN}. This distribution is highly dependent of the network architecture but some common patterns were observed. The most frequent type of step is \textit{Regularized Newton}, 
but other steps (in particular, along stochastic gradient directions, which we added in our implementation) occur in a small percentage of cases. Note that negative curvature directions 
are computed, highlighting the nonconvex nature of our problem.
In addition, we illustrate typical numbers of line-search iterations for our problem in 
Figure~\ref{fig:line_search_hist_IJCNN}: in general, this number is relatively mild, but the 
line-search process may require a significant number of function evaluations. Given that a 
worst-case example of this behavior is Figure~\ref{fig:line_search_hist_IJCNN_nn_22-1_100} 
where all samples are used, we attribute this to the architecture rather than to the line-search 
process itself.
%The distributions of number of line 
\begin{figure}
    \centering
    \subfigure[22-1 100\%]{
    \includegraphics[width=7.5cm, height=4.1cm]{./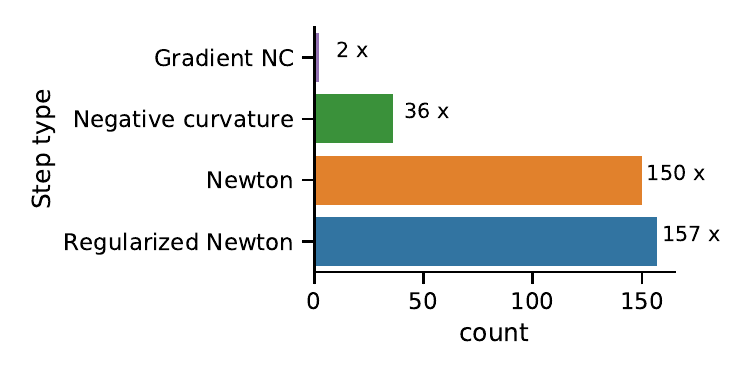}\label{fig:steps_IJCNN_nn_22-1}
    }
    ~ %add desired spacing between images, e. g. ~, \quad, \qquad, \hfill etc. 
      %(or a blank line to force the subfigure onto a new line)
    \subfigure[22-4-1 100\%]{
    \includegraphics[width=7.5cm, height=4.1cm]{./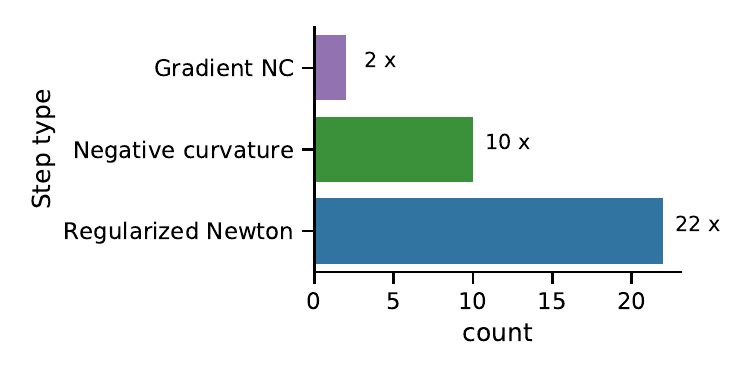}\label{fig:steps_IJCNN_nn_22-4-1}
    }
     \\
    \subfigure[22-4-4-1 1\%]{
    \includegraphics[width=7.5cm, height=4.1cm]{./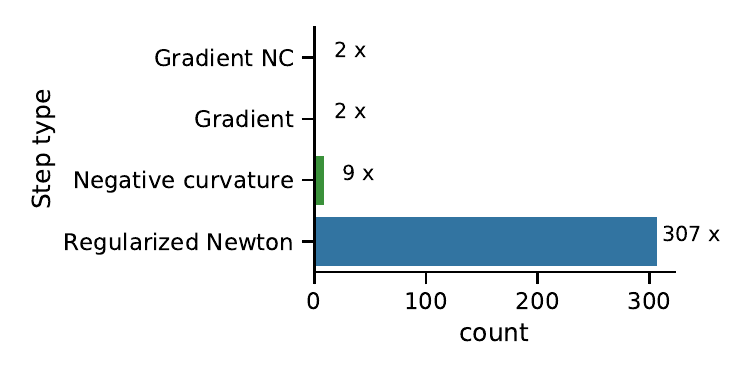}\label{fig:steps_IJCNN_nn_22-4-4-1_1}
    }
    ~ %add desired spacing between images, e. g. ~, \quad, \qquad, \hfill etc. 
      %(or a blank line to force the subfigure onto a new line)
    \subfigure[22-4-4-1 100\%]{
    \includegraphics[width=7.5cm, height=4.1cm]{./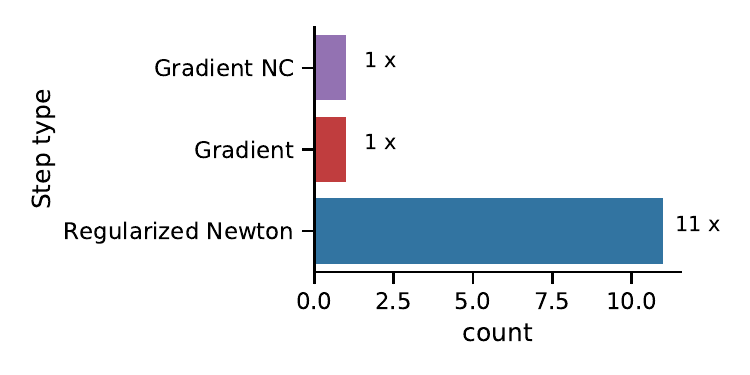}\label{fig:steps_IJCNN_nn_22-4-4-1_100}
    }
    \caption{The step type distribution of a single run of  ALAS algorithm on the IJCNN1 task for different architectures and sampling sizes.}
    \label{fig:steps_IJCNN}
\end{figure}
\begin{figure}
    \centering
    \subfigure[22-1 20\%]{
    \includegraphics[width=5cm, height=3.5cm]{./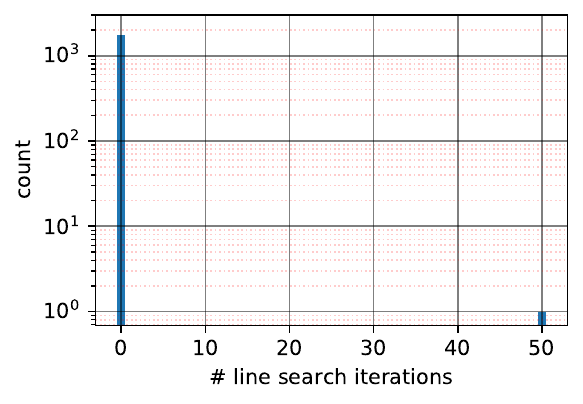}\label{fig:line_search_hist_IJCNN_nn_22-1_20}
    }
%     %add desired spacing between images, e. g. ~, \quad, \qquad, \hfill etc. 
%      %(or a blank line to force the subfigure onto a new line)
    \subfigure[22-1 100\%]{
    \includegraphics[width=5cm, height=3.5cm]{./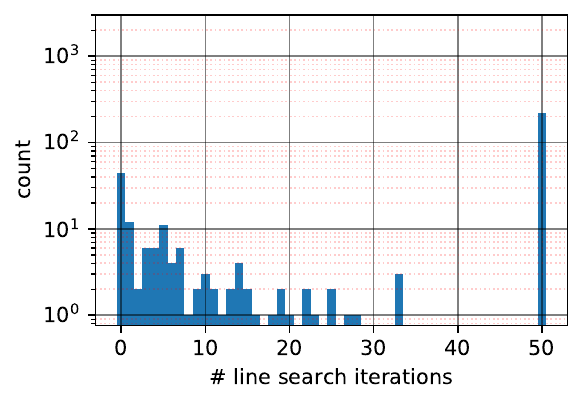}\label{fig:line_search_hist_IJCNN_nn_22-1_100}
    }
    \subfigure[22-4-1 100\%]{
    \includegraphics[width=5cm, height=3.5cm]{./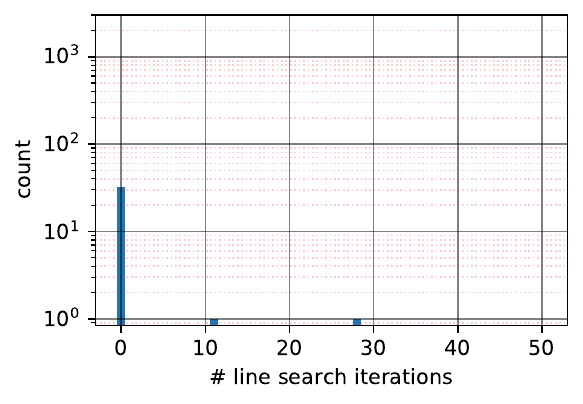}\label{fig:line_search_hist_IJCNN_nn_22-4-1_100}
    }
    \caption{Plots of the number of line-search iterations during each update for the IJCNN1 task for selected runs of the ALAS algorithm with different architectures and sampling sizes. The maximum number of line-search iterations was set to 50.}
    \label{fig:line_search_hist_IJCNN}
\end{figure}
}

\begin{table}[h!]
\centering
{\small
\begin{tabular}{c  c c c c | c c c c c }
\multicolumn{5}{c|}{Layers: 22-4-1} & \multicolumn{5}{c}{Layers: 22-4-4-1} \\ \hline
 alg. & $\skn_k$ & min loss & loss [8-10]s & iter. &  alg. & $\skn_k$ & min loss & loss [8-10]s  & iter.  \\ \hline
 ALAS &1\% & \textbf{0.0463} & \textbf{0.0478} & 551 &  ALAS & 1\% & \textbf{0.0434} & \textbf{0.0451} & 320  \\
 SGD (0.3) & 1\% & 0.0528 & 0.0558 & 12033  &  SGD (0.3) & 1\% & 0.0438 & 0.0454 & 11121\\
 SGD (0.01) & 1\% & 0.0872 & 0.0873 & 11779 &  SGD (0.01) & 1\% & 0.0841 & 0.0842 & 10997 \\ \hline
 ALAS & 5\% & \textbf{0.0449} & \textbf{0.0462} & 238 &  ALAS & 5\% & 0\textbf{.0450} & \textbf{0.0457} & 114 \\
 SGD (0.6) & 5\% & 0.0514 & 0.0569 & 8439 & SGD (0.3) & 5\% & 0.0468 & 0.0500 & 7894\\
 SGD (0.01) & 5\% & 0.0875 & 0.0876 & 8471 & SGD (0.01) & 5\% & 0.0845 & 0.0846 & 7792 \\\hline
 ALAS & 10\% & \textbf{0.0492} & \textbf{0.0510} & 124 &  ALAS & 10\% & \textbf{0.0470} & \textbf{0.0477} &  68\\
 SGD (0.6) & 10\% & 0.0557& 0.0626 & 6293 &  SGD (0.3) & 10\% & 0.0519 &  0.0571 & 5165 \\
 SGD (0.01) & 10\% & 0.0880 & 0.0883 & 5773 &  SGD (0.01) & 10\% & 0.0849 &  0.0850 & 5191\\\hline
 ALAS & 20\% & 0.0611 & \textbf{0.0628} & 76 &  ALAS & 20\% & \textbf{0.0491} & \textbf{0.0498} &  42\\
 SGD (0.6) & 20\% & \textbf{0.0598} & 0.0654 & 4389 &  SGD (0.3) & 20\% & 0.0545 &  0.0587 & 3728\\
 SGD (0.01) & 20\% & 0.0887 & 0.0891 & 4443 & SGD (0.01) & 20\% & 0.0852 &  0.0854 & 3694\\
\hline
\end{tabular}
}
\caption{Results over the time period $t =10 ~s$ on the IJCNN1 task.}% The scalar in the parenthesis for SGD entries is the learning rate. The column \textit{``loss [8-10]s''} shows the median loss over the last two seconds of the run.}
\label{tab:alg_comparison}
\end{table}

\begin{figure}
    \centering
    \subfigure[22-1 5\%]{
        \includegraphics[width=7.5cm, height=4.1cm]{./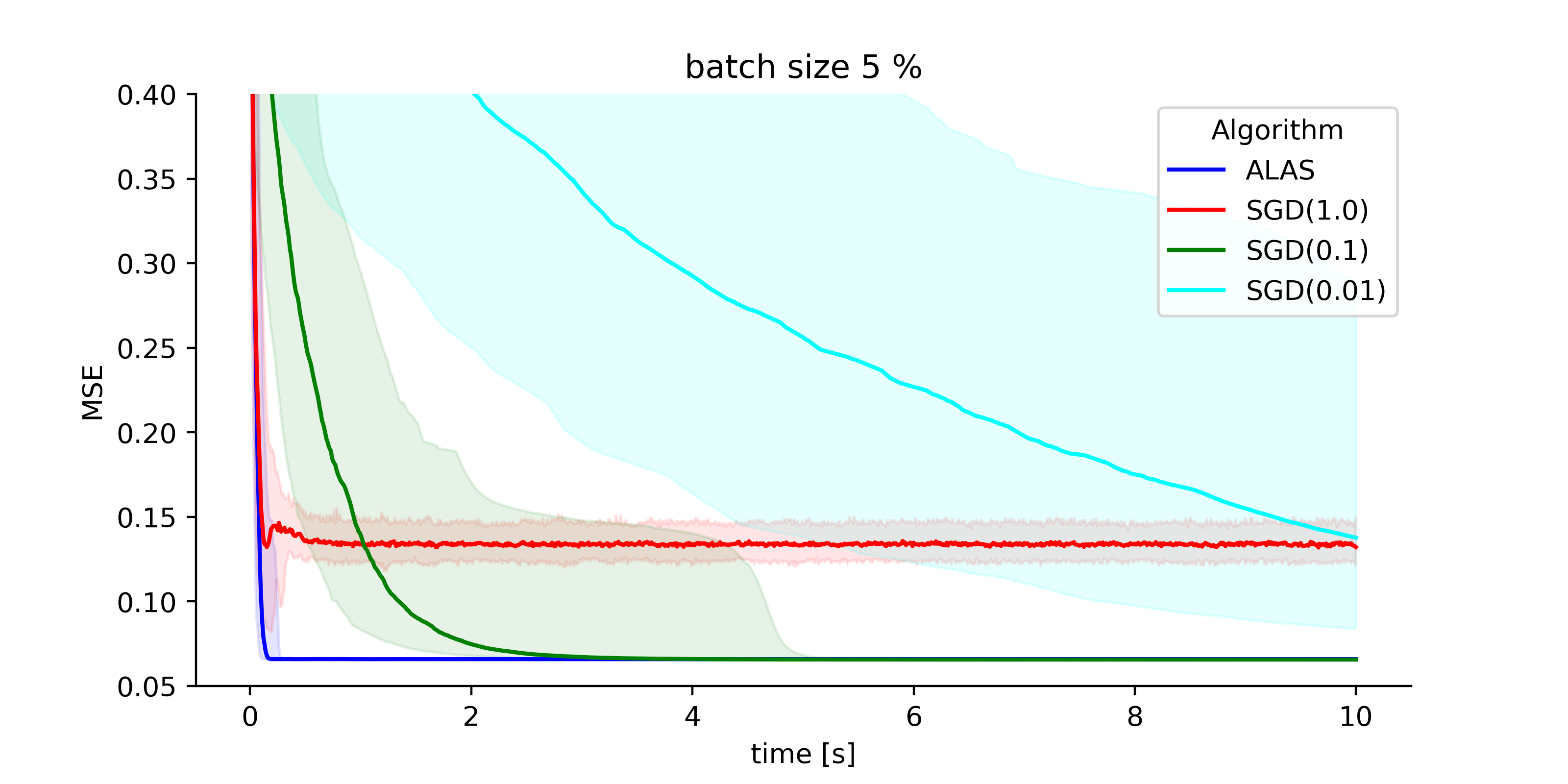}\label{fig:IJCNN_nn_22-1_005_perc}
    }
    ~ %add desired spacing between images, e. g. ~, \quad, \qquad, \hfill etc. 
      %(or a blank line to force the subfigure onto a new line)
    \subfigure[22-1 10\%]{
        \includegraphics[width=7.5cm, height=4.1cm]{./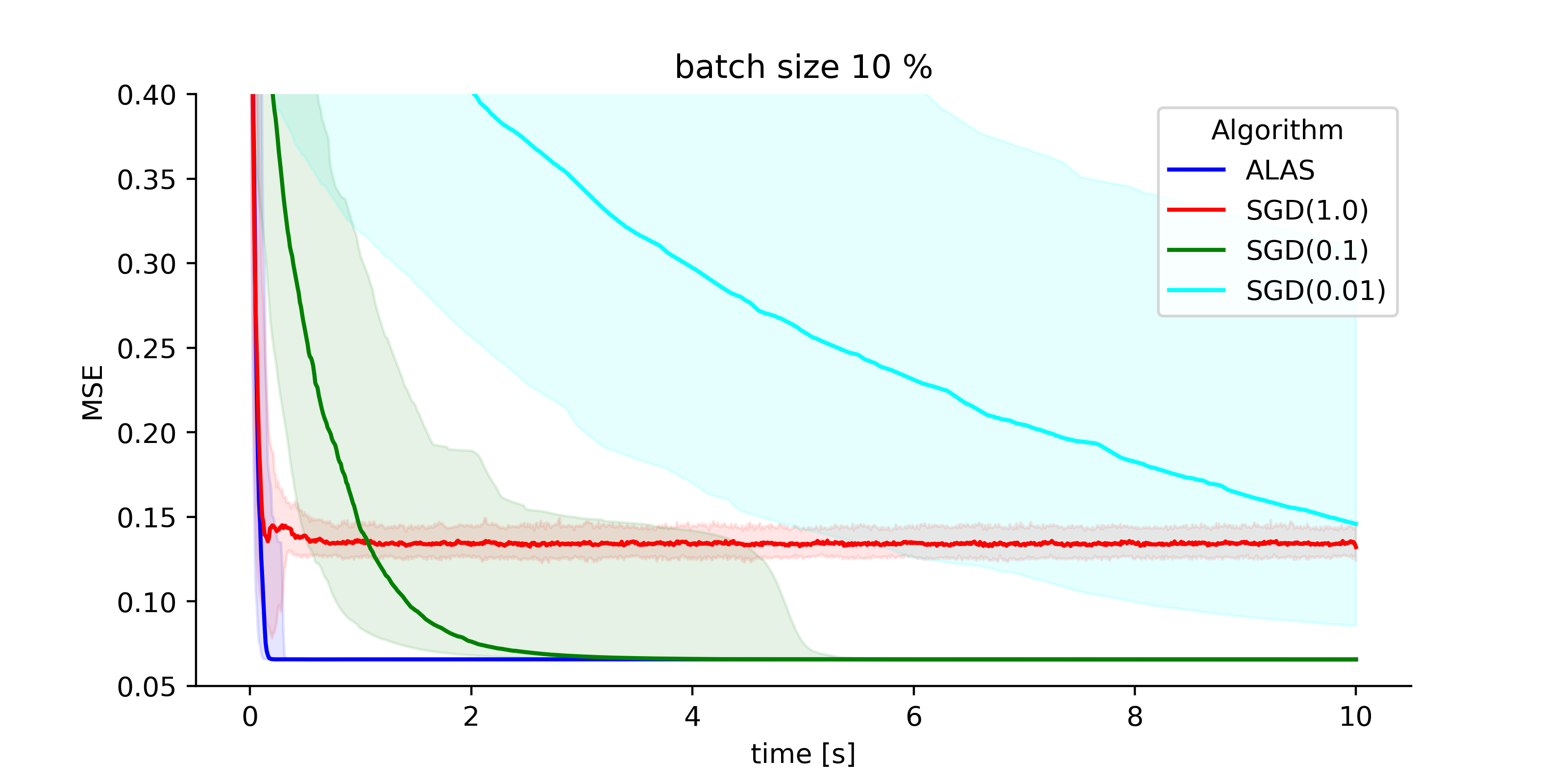}\label{fig:IJCNN_nn_22-1_010_perc}
    }
   \\
    \subfigure[22-1 20\%]{
        \includegraphics[width=7.5cm, height=4.1cm]{./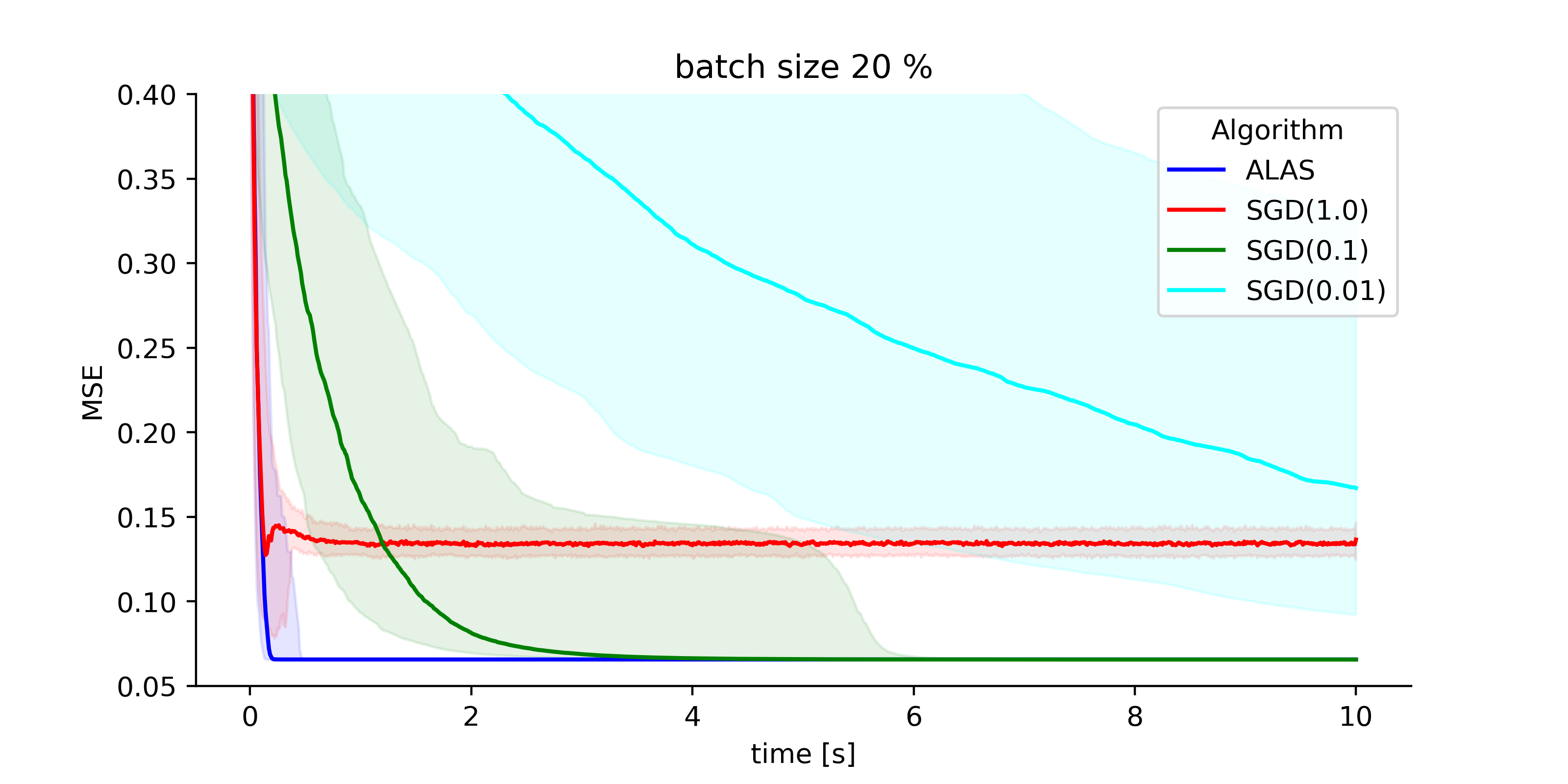}\label{fig:IJCNN_nn_22-1_020_perc}
    }
    ~ %add desired spacing between images, e. g. ~, \quad, \qquad, \hfill etc. 
      %(or a blank line to force the subfigure onto a new line)
    \subfigure[22-1 100\%]{
        \includegraphics[width=7.5cm, height=4.1cm]{./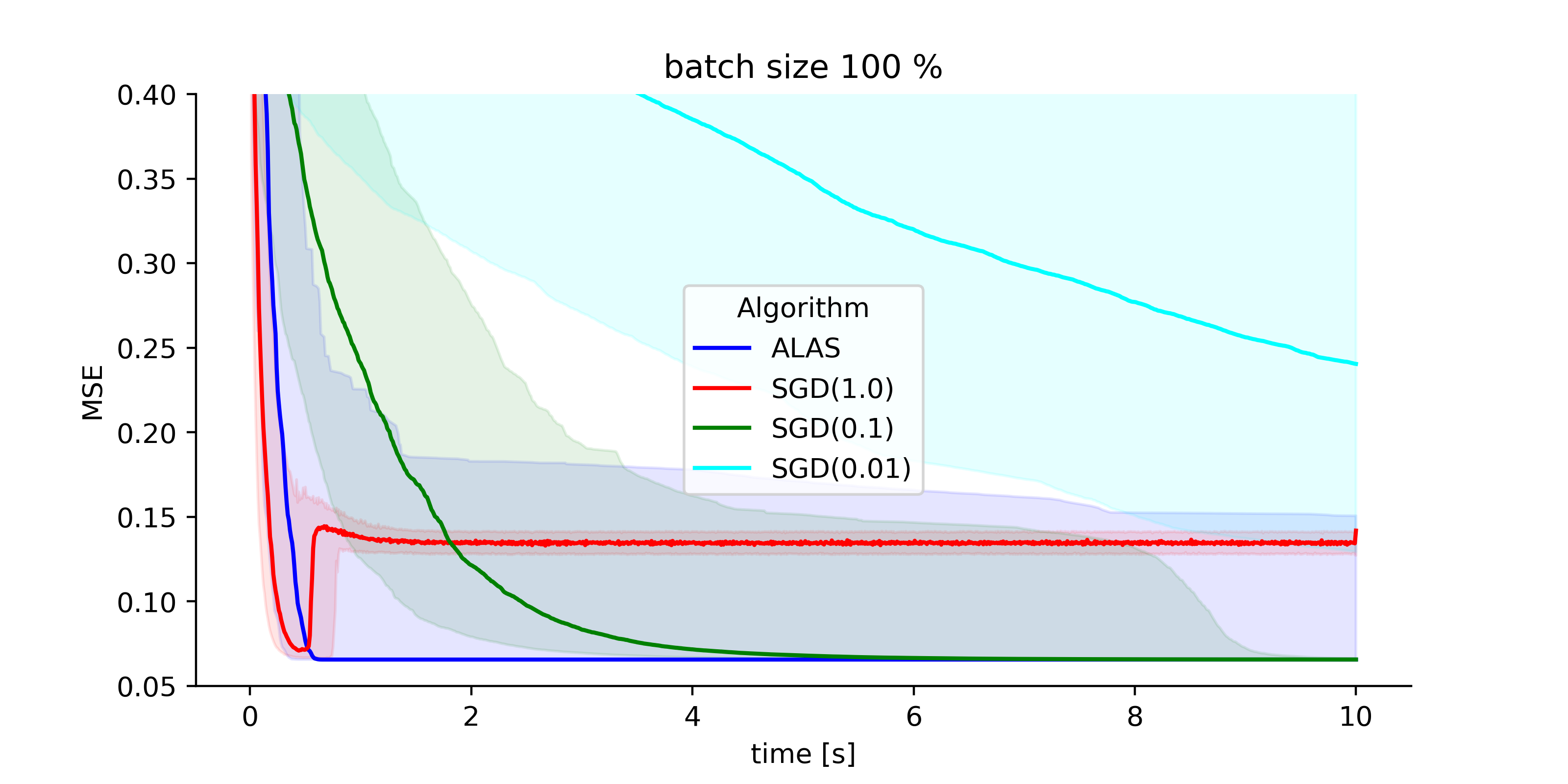}\label{fig:IJCNN_nn_22-1_100_perc}
    }
    \caption{Comparison of  ALAS and SGD (with a default learning rate 0.01) on the IJCNN1 dataset with a simple neural network with 22 input neurons, reporting
the median and $95\%$ confidence interval across 200 runs}% (22-4-1)  compared to the SGD with a default learning rate 0.01. Full losses are depicted.}
    \label{fig:IJCNN_nn_22_CI}
\end{figure}

\begin{figure}
    \centering
    \subfigure[22-1 5\%]{
        \includegraphics[width=7.5cm, height=4.1cm]{./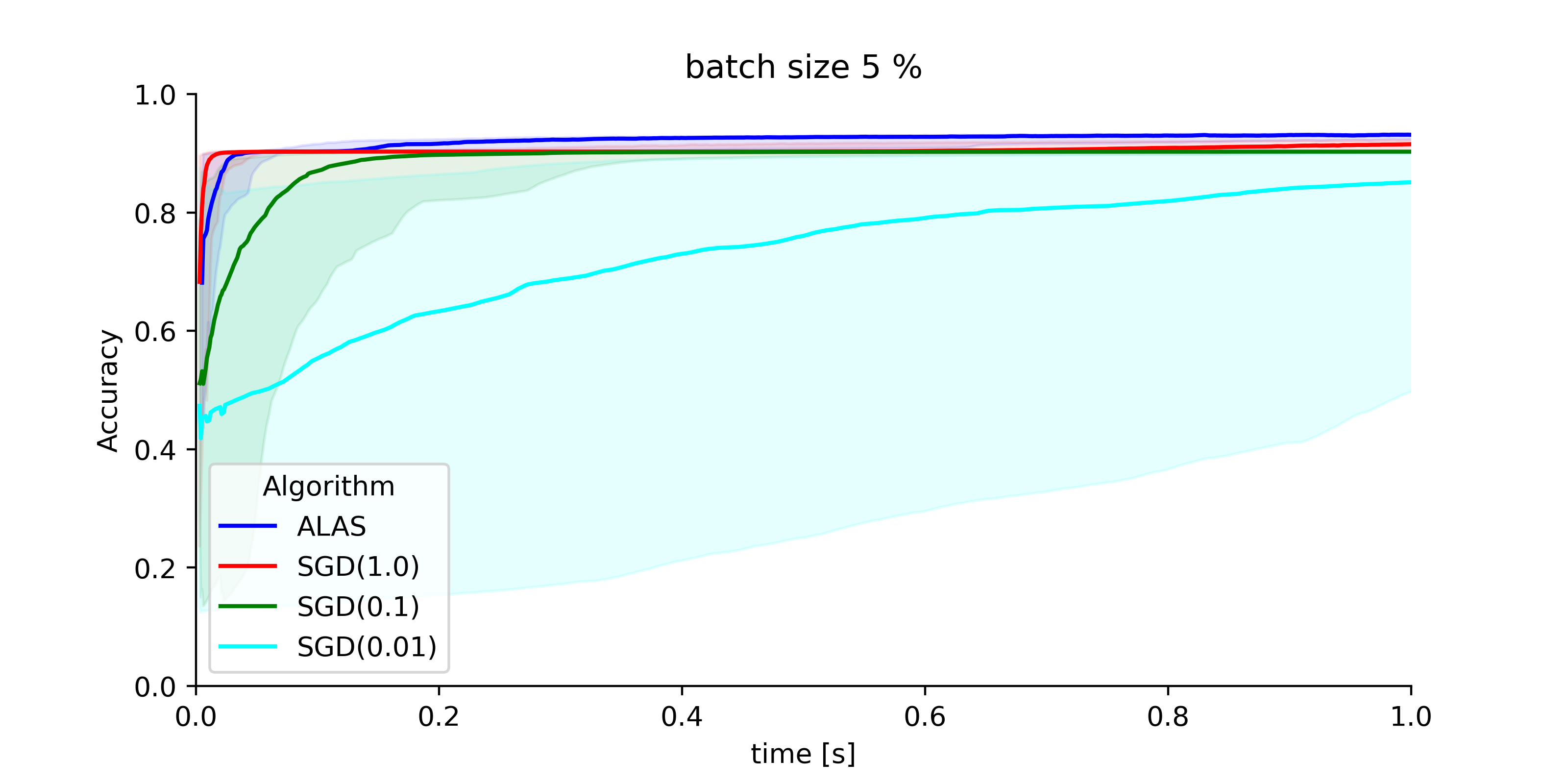}\label{fig:IJCNN_nn_22-1_005_acc_perc}
    }
    ~ %add desired spacing between images, e. g. ~, \quad, \qquad, \hfill etc. 
      %(or a blank line to force the subfigure onto a new line)
    \subfigure[22-1 10\%]{
        \includegraphics[width=7.5cm, height=4.1cm]{./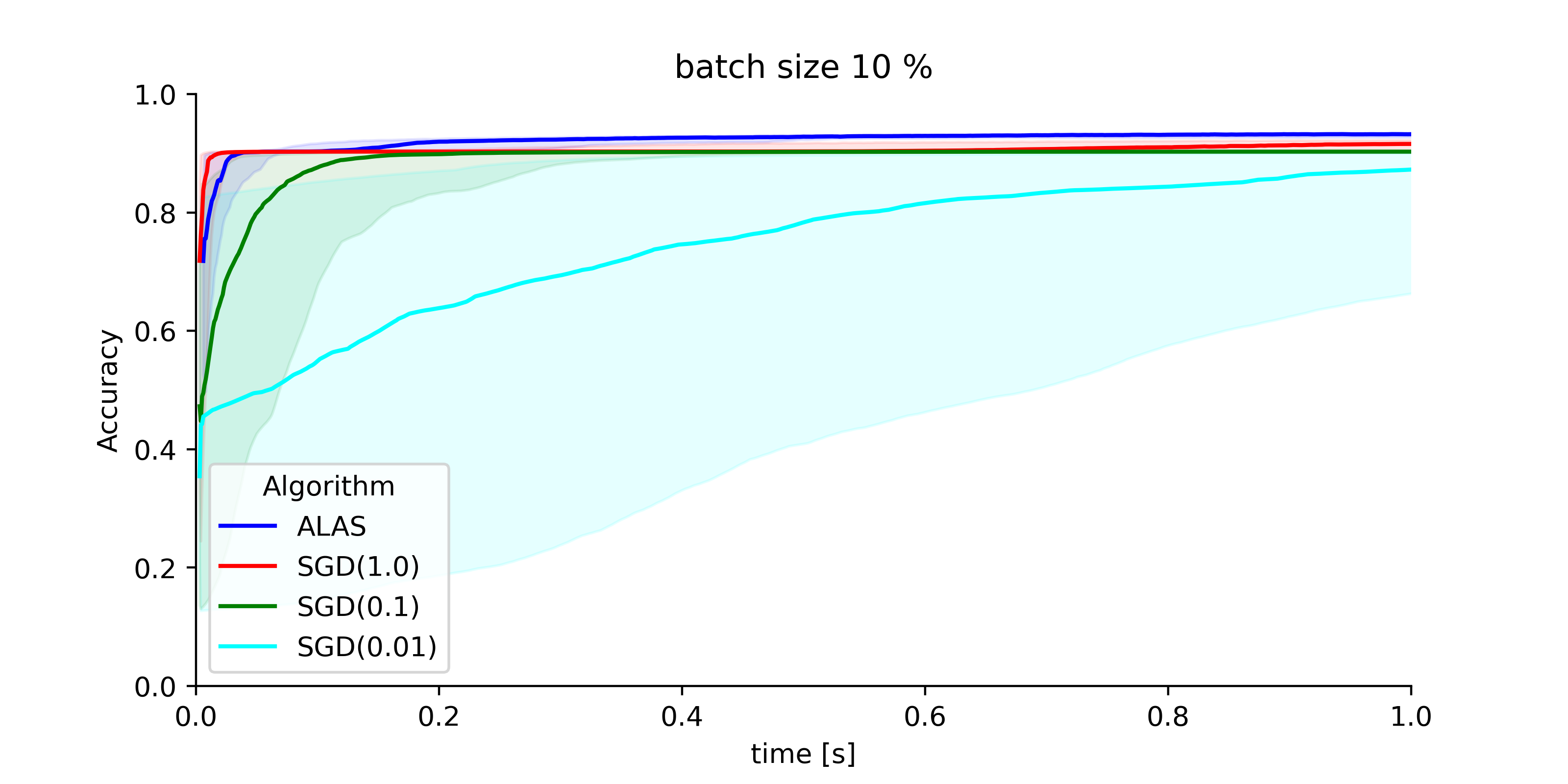}\label{fig:IJCNN_nn_22-1_010_acc_perc}
    }
   \\
    \subfigure[22-1 20\%]{
        \includegraphics[width=7.5cm, height=4.1cm]{./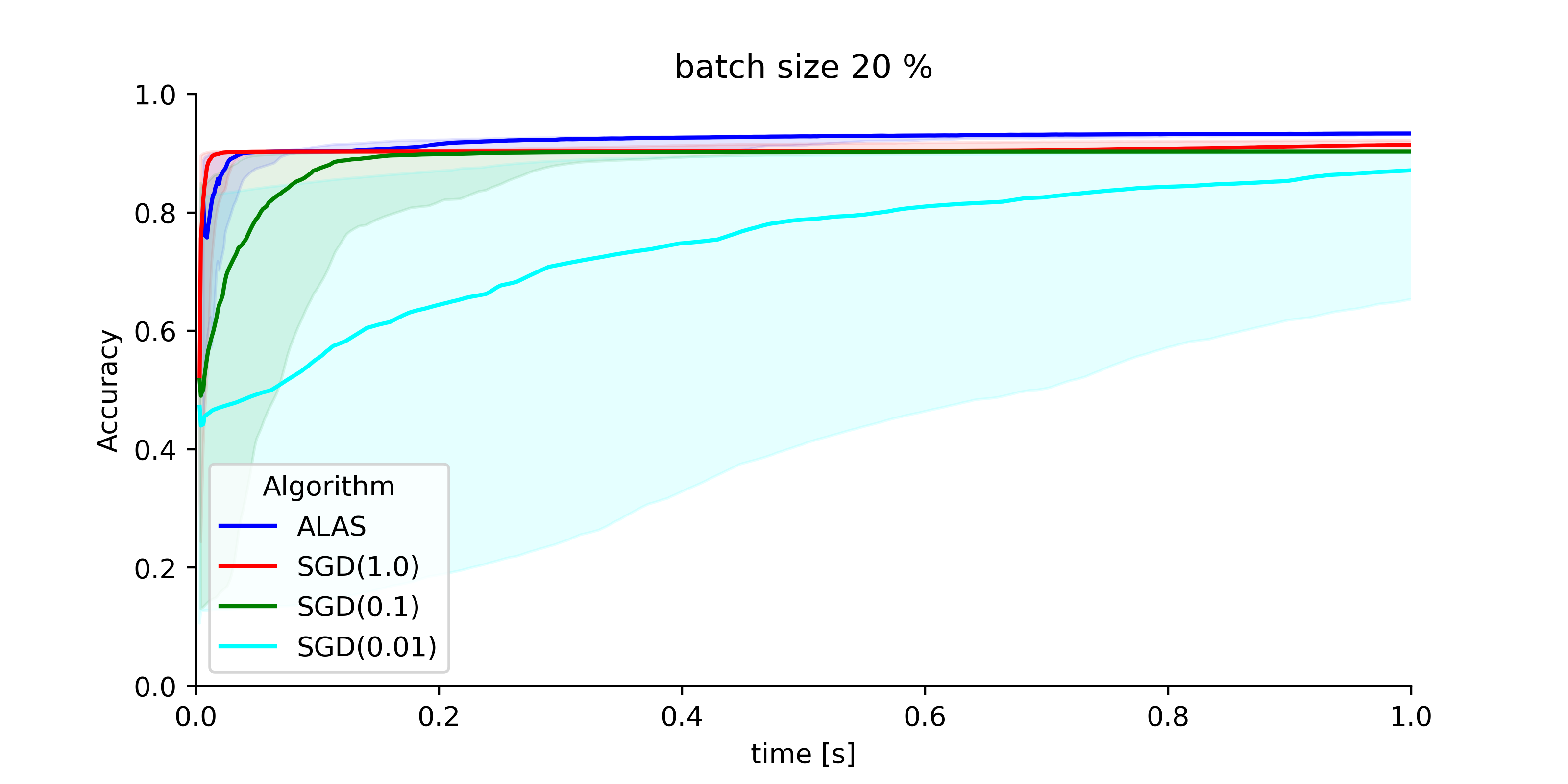}\label{fig:IJCNN_nn_22-1_020_acc_perc}
    }
    ~ %add desired spacing between images, e. g. ~, \quad, \qquad, \hfill etc. 
      %(or a blank line to force the subfigure onto a new line)
    \subfigure[22-1 100\%]{
        \includegraphics[width=7.5cm, height=4.1cm]{./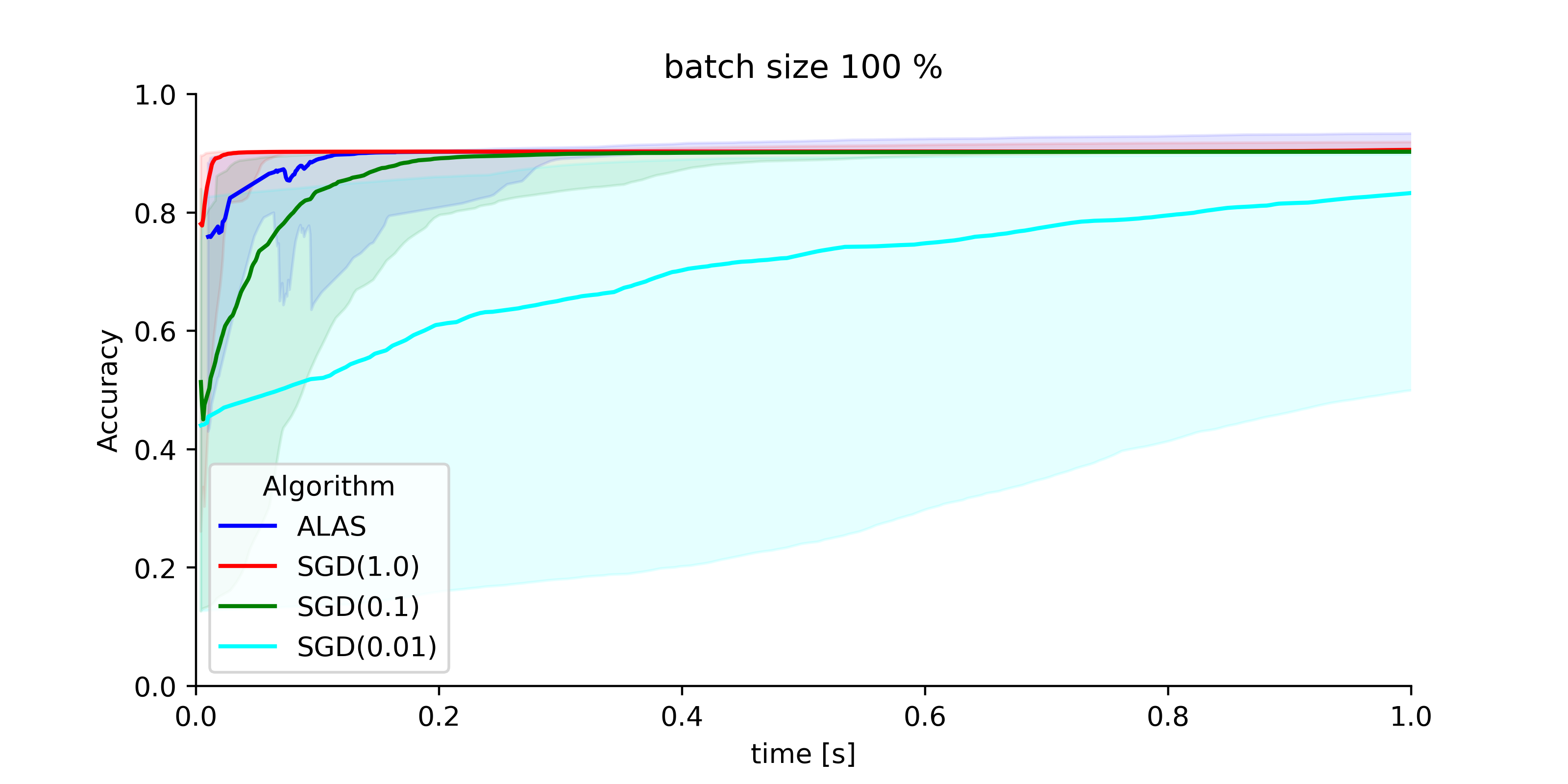}\label{fig:IJCNN_nn_22-1_100_acc_perc}
    }
    \caption{Comparison of levels of training accuracy for the training achieved by 
ALAS and SGD (with a default learning rate 0.01) on the IJCNN1 dataset with a simple neural network with 22 input neurons, reporting
the median and $95\%$ confidence interval across 200 runs}% (22-4-1)  compared to the SGD with a default learning rate 0.01. Full losses are depicted.}
    \label{fig:IJCNN_nn_22_acc}
\end{figure}

\begin{figure}
    \centering
    \subfigure[22-4-1 5\%]{
        \includegraphics[width=7.5cm, height=4.1cm]{./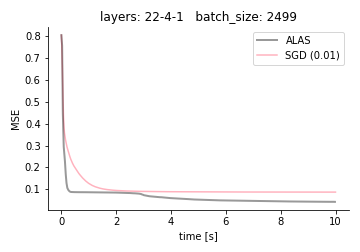}\label{fig:IJCNN_nn_22-4-1_0.05}
    }
    ~ %add desired spacing between images, e. g. ~, \quad, \qquad, \hfill etc. 
      %(or a blank line to force the subfigure onto a new line)
    \subfigure[22-4-1 10\%]{
        \includegraphics[width=7.5cm, height=4.1cm]{./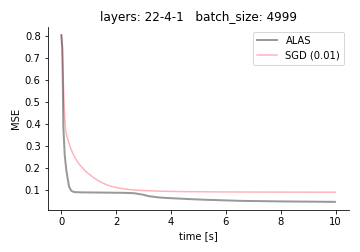}\label{fig:IJCNN_nn_22-4-1_0.10}
    }
   \\
    \subfigure[22-4-1 20\%]{
        \includegraphics[width=7.5cm, height=4.1cm]{./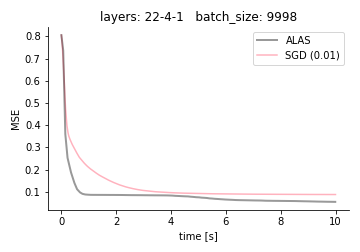}\label{fig:IJCNN_nn_22-4-1_0.20}
    }
    ~ %add desired spacing between images, e. g. ~, \quad, \qquad, \hfill etc. 
      %(or a blank line to force the subfigure onto a new line)
    \subfigure[22-4-1 100\%]{
        \includegraphics[width=7.5cm, height=4.1cm]{./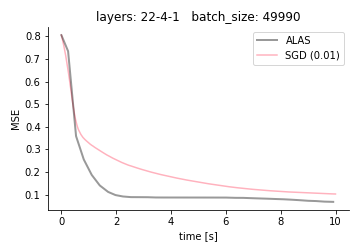}\label{fig:IJCNN_nn_22-4-1_1.00}
    }
    \caption{Comparison of  ALAS and SGD (with a default learning rate 0.01) on the IJCNN1 dataset with a simple neural network with 22 input neurons, 4 neurons in the hidden layer and an output neuron.}% (22-4-1)  compared to the SGD with a default learning rate 0.01. Full losses are depicted.}
    \label{fig:IJCNN_nn_22-4-1}
\end{figure}

\begin{figure}
    \centering
    \subfigure[22-4-4-1 5\%]{
        \includegraphics[width=7.5cm, height=4.1cm]{./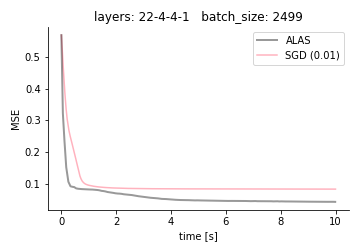}\label{fig:IJCNN_nn_22-4-4-1_0.05}
    }
    ~ %add desired spacing between images, e. g. ~, \quad, \qquad, \hfill etc. 
      %(or a blank line to force the subfigure onto a new line)
    \subfigure[22-4-4-1 10\%]{
        \includegraphics[width=7.5cm, height=4.1cm]{./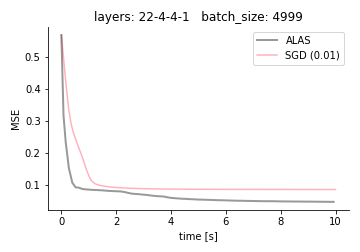}\label{fig:IJCNN_nn_22-4-4-1_0.10}
    }
   \\
    \subfigure[22-4-4-1 20\%]{
        \includegraphics[width=7.5cm, height=4.1cm]{./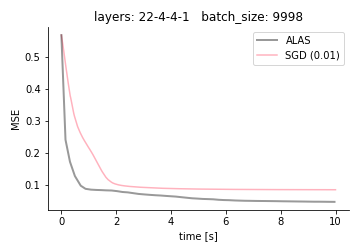}\label{fig:IJCNN_nn_22-4-4-1_0.20}
    }
    ~ %add desired spacing between images, e. g. ~, \quad, \qquad, \hfill etc. 
      %(or a blank line to force the subfigure onto a new line)
    \subfigure[22-4-4-1 100\%]{
        \includegraphics[width=7.5cm, height=4.1cm]{./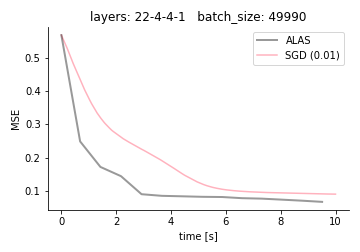}\label{fig:IJCNN_nn_22-4-4-1_1.00}
    }
    \caption{Comparison of  ALAS and SGD (with a default learning rate 0.01)  on the IJCNN1 dataset with a simple neural network with 22 input neurons, two hidden layers each with 4 neurons and an output neuron.}% (22-4-4-1).}%compared to the SGD with a default learning rate 0.01. Full losses are depicted.}
    \label{fig:IJCNN_nn_22-4-4-1}
\end{figure}

%%%%%%%%%%%%%%%%%%%%%%%%%%%%%%%%%%%%%%%%%%%%%%%%%%%%%%%%%%%%%%%%%%%%%%%%%%%%%%%
\subsection{Transfer learning using MNIST} 
\label{subsec:numMNIST}
The current implementation of ALAS is very suitable for transfer learning, where only a few layers of a 
neural network need to be trained (i.e. only a few parameters need to be optimized). 
To demonstrate this, we first trained all the parameters in a convolutional neural network (CNN)
on a subset of the MNIST dataset \cite{lecun2010mnist} (classification of digits 0 - 7). 
%using the Adadelta optimizer \cite{Zeiler2012}\crnote{Reference to Adadelta missing}\vnote{Added reference} .  
The CNN had 9 convolution layers 
(with 16 filters 
%\ebnote{ what does filter mean here,can we mention the number of parameters instead?}
%\vknote{Elhoucine: (almost) everyone who will read JMLR will know what a filter is for a CNN}
 for the first layer, 32 filters for the second one, 64 filters for the other layers), 
followed by a global max pooling layer and a single dense classification layer with softmax activation 
function and 25\% dropout for the eight digits (8 output neurons). 
Once this network was trained, the classification layer was removed and the extracted features were used 
for transfer learning to classify the remaining two digits (8 and 9) images of the MNIST dataset, 
that were unseen during the training of the original CNN. The new classification layers were then 
trained using the ALAS and the SGD algorithms (we did the tests using the following  step sizes {1, 0.6, 0.3, 0.1}), with 
sample sizes corresponding to 100\%, 20 \%, 10 \%, and  5\% of the entire data. The total number of samples with the digits 8 and 9 is $N=11800$ and the input to the trained layer are 8 neurons. The optimized function was the MSE.
%\crnote{We should give the values of $N$ and $n$ here. We are also missing details about the}
%\vnote{I have added the details.}\ebnote{$n$ still missing here}
As shown in Table~\ref{tab:alg_comparison_transfer_1}, ALAS outperformed the SGD variants in most of the 
runs. Note that when the whole dataset was used, i.e. $\mathcal{S}_k=\{1,\dots,N\}$ for all $k$, ALAS 
outperforms the SGD variants by a significant margin for all four neural network architectures.
\begin{table}[h!]
\centering
{\small
\begin{tabular}{c  c c c c | c c c c c }
\multicolumn{5}{c|}{Layers: 8-1} & \multicolumn{5}{c}{Layers: 8-1-1} \\ \hline
 alg. & $\skn_k$ & min loss &  loss [8-10]s & iter.  & alg. & $\skn_k$ & min loss & loss [8-10]s & iter. \\ \hline
ALAS & 5\% &  \textbf{0.2739} & \textbf{0.2752} & 12220 & ALAS & 5\% &  \textbf{0.2721} & \textbf{0.2815} & 11887 \\
 SGD (1.0) & 5\% & 0.2796  & 0.2801 &  20264& SGD (0.3) & 5\% & 0.2796  & 0.2801 &  20264\\ \hline
 ALAS & 10\% & \textbf{0.2732}  & \textbf{0.2740} & 11514 & ALAS & 10\% & \textbf{0.2667}  & \textbf{0.2722} & 10116 \\
 SGD (1.0) & 10\% & 0.2795  & 0.2799 & 20872 & SGD (0.6) & 10\% & 0.2738  & 0.2753 & 20649 \\ \hline
 ALAS & 20\% & \textbf{0.2728}   & \textbf{0.2732} &  10691& ALAS & 20\% & \textbf{0.2657}   & \textbf{0.2704} &  8593\\
 SGD (1.0) & 20\% &  0.2796 & 0.2800 & 20141& SGD (0.6) & 20\% &  0.2736 & 0.2749 & 20327 \\ \hline
 ALAS & 100\% & \textbf{0.2587}  & \textbf{0.2592}&  5118 & ALAS & 100\% & \textbf{0.2362}  & \textbf{0.2363}&  3630\\
 SGD (1.0) & 100\% & 0.2800  & 0.2803 &  17832& SGD (0.3) & 100\% & 0.2751  & 0.2754 &  16697\\ \hline 
 \multicolumn{5}{c|}{Layers: 8-2-1} & \multicolumn{5}{c}{Layers: 8-4-1} \\ \hline
 ALAS & 5\% &  \textbf{0.2649} & 0.2762 & 6964  & ALAS & 5\% &  0.2516 & \textbf{0.2567} & 2245 \\
 SGD (1.0) & 5\% & 0.2667  & \textbf{0.2702} &  20975 & SGD (1.0) & 5\% & \textbf{0.2524}  & 0.2550 &  1356\\ \hline
 ALAS & 10\% & \textbf{0.2632}  & \textbf{0.2674} & 5778 & ALAS & 10\% & 0.2524  & \textbf{0.2550} & 1356 \\
 SGD (1.0) & 10\% & 0.2681  & 0.2705 & 19502 & SGD (1.0) & 10\% & \textbf{0.2505}  & 0.2554 & 19502 \\ \hline
 ALAS & 20\% & \textbf{0.2602}   & \textbf{0.2625} &  3609& ALAS & 20\% & 0.2529   & \textbf{0.2542} &  811\\
 SGD (1.0) & 20\% &  0.2695 & 0.2712 & 16311& SGD (1.0) & 20\% &  \textbf{0.2504} & 0.2555 & 16211 \\ \hline
 ALAS & 100\% & \textbf{0.2295}  & \textbf{0.2301}&  1151& ALAS & 100\% & \textbf{0.2227}  & \textbf{0.2247}&  220\\
 SGD (1.0) & 100\% & 0.2754  & 0.2762 &  7232 & SGD (1.0) & 100\% & 0.2615  & 0.2645 &  6377\\ \hline 
\end{tabular}
}
\caption{Results reached over the given time period $t =10 ~s$ on the transfer learning task. }%The scalar in the parenthesis for SGD entries is the step size. The column \textit{median loss [8-10]s} shows the median loss over the last two seconds.}
\label{tab:alg_comparison_transfer_1}
\end{table}
\subsubsection{Transfer learning without pre-computed features}
\label{subsubsec:no_precomputed}
The experiment above is using pre-computed features, i.e. the original images were run through the pre-trained network once and the saved output was used for the optimization of the top layers. While this technique brings massive speed-ups as it is not necessary to compute the outputs for the same images repeatedly, it can only be used when there are repeated images. This prevents the use of online data augmentation, where the individual images are randomly transformed (e.g. rotations, scaling) for each batch and the output from the pre-trained layers has to be recomputed every time. 

This is a very favorable scenario for the ALAS algorithm as it usually needs much less updates than the SGD even though each update is more costly in terms of operations --- the need to recompute the features every time adds fixed costs to update steps of both SGD and ALAS. We have run the same experiment as above but without the pre-computed features with the time limit 1 hour instead of 10 seconds and with different weight initialization; the results are shown in Table~\ref{tab:alg_comparison_transfer_1b} and in Fig~\ref{fig:TransferMNIST01NBF_8-4-1}. Figures~\ref{fig:TransferMNIST01NBF_nn_8-4-1_0.20xlim} and \ref{fig:TransferMNIST01NBF_nn_8-4-1_0.20ylim} shows details of the run depicted in Figure~\ref{fig:TransferMNIST01NBF_nn_8-4-1_0.20}. The number of iterations was similar for both algorithms (only small differences possibly caused by the different utilization of the cloud server) as the feature evaluation step was the dominant for the same \textit{8-4-1} layer configuration for all sample sizes. For higher number of weights, the ALAS performs comparably less iterations than the SGD as the update step is no longer negligible compared to the feature evaluation using the fixed weight neural network (but that is still quite costly and thus we do not observe order of magnitude differences here) as can be seen in the networks with configurations \textit{8-16-4-1} (only ALAS was run to show the small drop in number of iterations) and \textit{8-32-16-1} (the SGD still performs similar number of iterations as for the \textit{8-4-1} layer configuration).
\begin{table}[h!]
\centering
{\small
\begin{tabular}{c  c c c c | c c c c c }
\multicolumn{5}{c|}{Layers: 8-4-1} & \multicolumn{5}{c}{Layers: 8-16-4-1} \\ \hline
 alg. & $\skn_k$ & min loss &  loss [3400-3600]s & iter.  & alg. & $\skn_k$ & min loss & loss [3400-3600]s & iter. \\ \hline
 ALAS & 10\% & \textbf{0.1805}  & \textbf{0.2483}&  1629& ALAS & 20 \% & 0.1884 & 0.2294 & 748 \\ \hline
 SGD (1.0) & 10\% & 0.2003 & 0.2706 &  1709& SGD (1.0) & 100\% & 0.2255  & 0.2680 &  870\\ \hline 
 SGD (0.1) & 10\% & 0.2144  & 0.2853 &  1683& SGD (0.1) & 100\% & 0.2326  & 0.2796 &  869\\ \hline 
 SGD (0.01) & 10\% & 0.2417  & 0.3159 &  1674& SGD (0.01) & 100\% & 0.2517  & 0.3028 & 878\\ \hline 
\hline
 ALAS & 20\% & \textbf{0.2034}  & \textbf{0.2443}&  871& ALAS & 20 \% & \textbf{0.2076} & \textbf{0.2664} & 562 \\ \hline
 SGD (1.0) & 20\% & 0.2356  & 0.2802 &  878 & SGD (0.1) & 20\%  & 0.2230 & 0.2709 &  887\\ \hline
 SGD (0.1) & 20\% & 0.2488  & 0.2913 &  848& SGD (0.01) & 20\% & 0.2480 & 0.2932 &  876\\ \hline 
 SGD (0.01) & 20\% & 0.3657  & 0.3946 &  900& SGD (1.0) & 20\% & 0.3015 & 1.9834 &  886\\ \hline 
\end{tabular}
}
\caption{Results reached over the given time period $t =3600 ~s$ on the transfer learning task.}% without pre-computed features. The scalar in the parenthesis for SGD entries is the step size. The column \textit{median loss [3400-3600]s} shows the median loss over the last 200 seconds.}
\label{tab:alg_comparison_transfer_1b}
\end{table}

\begin{figure}
    \centering
    \subfigure[8-4-1 100\%]{
        \includegraphics[width=7.5cm, height=4.1cm]{./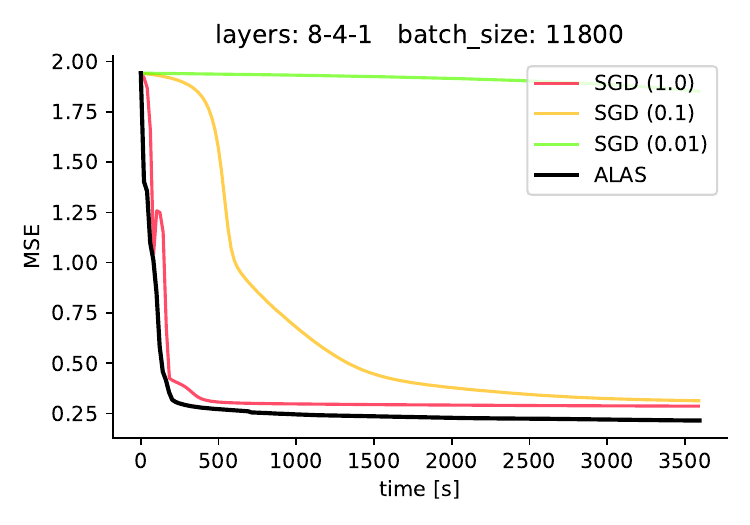}\label{fig:TransferMNIST01NBF_nn_8-4-1_1.00}
    }~
    \subfigure[8-4-1 20\%]{
        \includegraphics[width=7.5cm, height=4.1cm]{./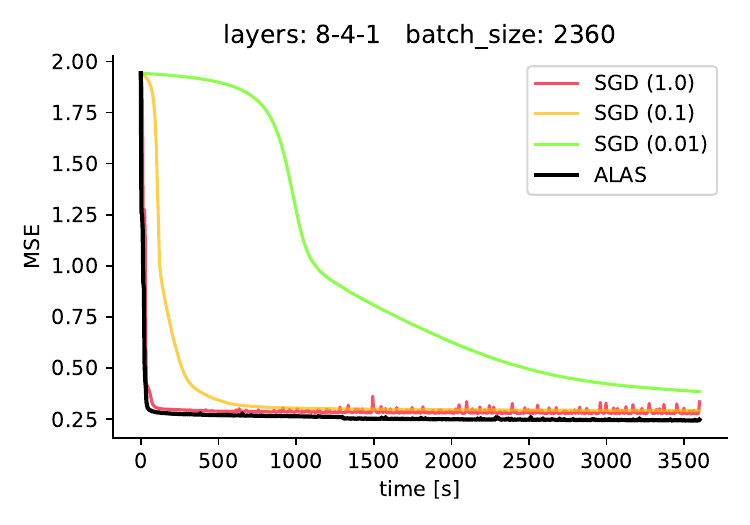}\label{fig:TransferMNIST01NBF_nn_8-4-1_0.20}
    }
    \subfigure[8-4-1 20\% (first 160 s)]{
        \includegraphics[width=7.5cm, height=4.1cm]{./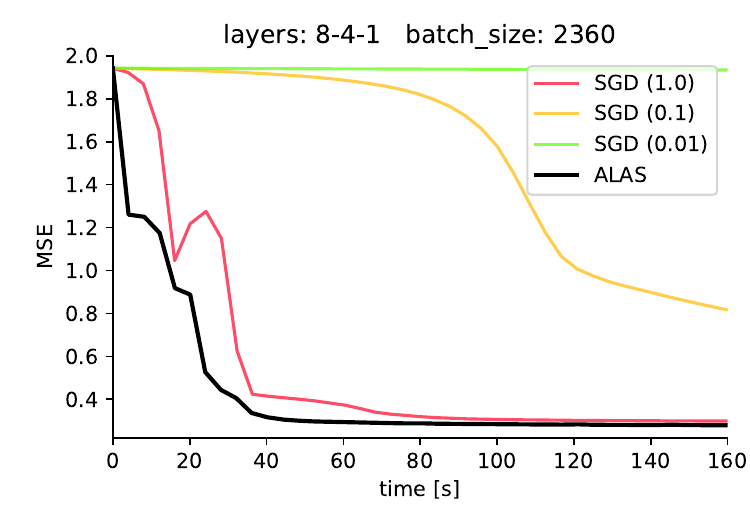}\label{fig:TransferMNIST01NBF_nn_8-4-1_0.20xlim}
    }~
    \subfigure[8-4-1 20\% (detail)]{
        \includegraphics[width=7.5cm, height=4.1cm]{./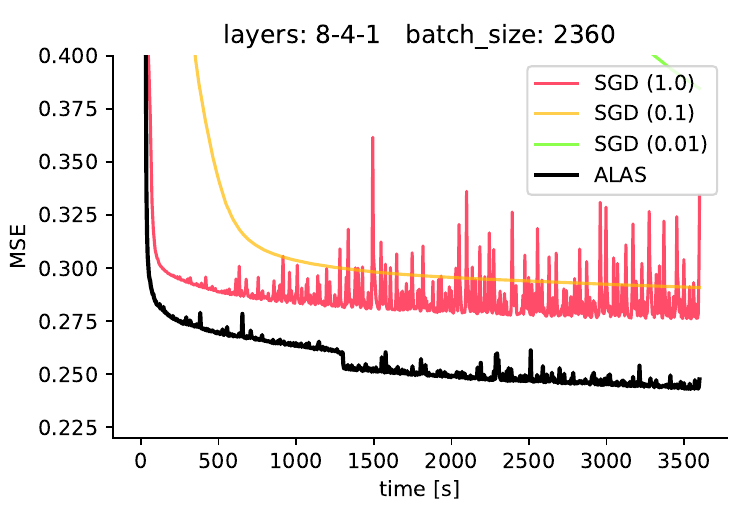}\label{fig:TransferMNIST01NBF_nn_8-4-1_0.20ylim}
    }
    \caption{Comparison of  ALAS and SGD (with various learning rates) on the transfer learning task without pre-computed features with a simple neural network with 8 input neurons, 4 hidden neurons and an output neuron.}
    \label{fig:TransferMNIST01NBF_8-4-1}
\end{figure}

%%%%%%%%%%%%%%%%%%%%%%%%%%%%%%%%%%%%%%%%%%%%%%%%%%%%%%%%%%%%%%%%%%%%%%%%%%%%%%%
\section{Conclusion} 
\label{sec:conc}
%%%%%%%%%%%%%%%%%%%%%%%%%%%%%%%%%%%%%%%%%%%%%%%%%%%%%%%%%%%%%%%%%%%%%%%%%%%%%%%
In this paper, we presented a line-search method for stochastic optimization, 
wherein the Hessian, gradient, and function values must be estimated through 
subsampling and cannot be obtained exactly. Using probabilistically accurate 
models, we derived a complexity result on the expected number of iterations 
until an approximate measure of stationary is reached for the current model. 
This result in expectation is complementary to those holding with high 
probability, i.e. with probability of drawing an accurate sample at every 
iteration. 
%In our setting, we are able to obtain convergence in expectation at 
%a rate which is commensurate with the state of the art, regardless of the presence of outliers in sample 
%estimates resulting in poor steps. 
We also proposed a practical strategy to assess whether the current iterate is 
close to a sample point for the original objective, that does not require the 
computation of the full function. Our numerical experiments showed the potential 
of the proposed approach on several machine learning tasks, including transfer 
learning.

We believe that the results of \revised{this paper encourage further study 
of} second-order algorithms despite the prevailing paradigm of using 
first-order methods for their computationally cheap iterations. 
\revised{In particular, our approach could be helpful in generalizing other 
line-search techniques to the context of subsampled function values while our 
theoretical analysis, that captures the worst-case behavior of the problem, 
can likely be refined to exploit the problem structure.}
%Developing specialized versions of our algorithm thus represents an 
%interesting avenue of research.
%In fact, our line-search tools could also serve the development of (stochastic) first-order methods, in 
%which the iterates are typically subject to a lot of noise, and both learning rate scheduling and tuning 
%are required in order to prevent the training from slowing down. Using a line search allows for a 
%lower training error in expectation, while preventing extensive iterate noise.
%
%{\color{blue} \vknote{Added this for good measure}
%Next, the use of a line search and the results on probabilistically motivated stopping criteria points
%to the possibility of more principled selection of final iterates, as typically stochastic first order methods
%result in a lot of noise in the iterates. Learning rate reduction schedules ameliorate this, but require additional
%tuning in order to avoid the slowing down of training. The use of a line search accomplishes the two objectives
%of lowering training error while preventing extensive iterate noise, something that stochastic first order methods
%are forced to balance.
%}
%
\revised{
Finally, a high-performance implementation of our method would benefit from 
more practical features such as matrix-free operations and iterative linear algebra 
(see preliminary numerical results in~\cite{kungurtsev2016}) as well as 
adaptive sampling batch sizes, following the recent trends in stochastic 
optimization~\cite{bellavia2021stochastic}. A number of difficulties 
arise in extending our complexity analysis to these frameworks, yet developing 
practical schemes with such guarantees is an important and exciting perspective 
for future research.}
\subsection*{Acknowledgments}

{\small We are indebted to Courtney Paquette and Katya Scheinberg for raising an issue with the first version of this paper, that lead to significant improvement of its results.
\revised{We would also like to thank the reviewers of this paper for their insightful comments.}}

% References here (outcomment the appropriate case)

% CASE 1: BiBTeX used to constantly update the references
%   (while the paper is being written).
%\bibliographystyle{informs2014} % outcomment this and next line in Case 1
%\bibliography{<your bib file(s)>} % if more than one, comma separated
\bibliographystyle{plain}
\bibliography{thebib}

\begin{appendices}

\section{Supplementary proofs}
\subsection{Proof of Lemma~\ref{lm:bound:alpha_kd_k}}
 
%\proof{Proof of~\cite[Lemma~3]{bergou2020stochls}:}
	We consider in turn the three possible steps that can be taken at iteration $k$, and obtain a lower bound 
	on the amount  $\alpha_k \|d_k\| $ for each of those. 
%	
%	To this end, we will make use of the fact 
%	that~\eqref{eq:boundsamplesizenostationary} implies
%	\begin{equation} \label{eq:proofboundsamplenostationary}
%		\skn_k \; \ge \; \max\left\{ \pfun\left( c_{nc} \epsilon^{1/2}\right),
%		\pfun\left( c_{n} \epsilon^{1/2}\right),\pfun\left( c_{rn} \epsilon^{1/2}\right) \right\}
%	\end{equation}
%	thanks to the non-increasing property of the function $\pfun$.
	
	\textbf{Case 1:} $\lambda_k < -\epsilon^{1/2}$ (negative curvature step). In that case, we apply the same reasoning 
	than in~\cite[Proof of Lemma 1]{royer2018complexity} with 
	the model $\hat f_k$ playing the role of the objective, the backtracking 
	line search terminates with the step length $\alpha_k =\theta^{j_k}$, with 
	$j_k \le \bar{j}_{nc} +1$ and $\alpha_k \ge \frac{3 \theta}{L_H+\eta}$. When $d_k$ is 
	computed as a negative curvature direction, one has $\|d_k\| = -\lambda_k >0$. Hence,
	\[
		\alpha_k \|d_k\| \; \ge \;  \frac{3 \theta}{L_H+\eta}[-\lambda_k]
		\; = \; c_{nc} [-\lambda_k] \ge c \epsilon^{1/2}.
	\]

	\textbf{Case 2:} $\lambda_k > \|g_k\|^{1/2}$ (Newton step). Because the stationarity is not achieved 
	and $\lambda_k > 0$ in this case, we necessarily have 
	$\|\tilde{g}_k\|=\min\{\|g_k\|,\|g^+_k\|\} > \epsilon$. From 
	Algorithm~\ref{alg:alas}, we know that $d_k$ is chosen as the Newton step. Hence, using the argument 
	of~\cite[Proof of Lemma 3]{royer2018complexity} with $\epsilon_H = \epsilon^{1/2}$, the 
	backtracking line search terminates with the step length $\alpha_k =\theta^{j_k}$, where
	\begin{equation*}
	j_k \le \left[\log_{\theta} \left( \sqrt{\frac{3}{L+\eta}} \frac{ \epsilon^{1/2}}
	{\sqrt{U_g}}\right) \right]_{+} + 1 = \bar{j}_n + 1,
	\end{equation*}
	thus the first part of the result holds. 
	If the unit step size is chosen, we have 
	by~\cite[Relation (23)]{royer2018complexity} that
	\begin{equation} \label{eq:normunitnewtonstep}
		\alpha_k \|d_k\| = \|d_k\| \ge \left[\frac{2}{L_H}\right]^{1/2} 
		\|g(x_k+\alpha_k d_k;\mathcal{S}_k)\|^{1/2} \ge c  \epsilon^{1/2}.
	\end{equation}		
	
	Consider now the case $\alpha_k<1$. 
	Using~\cite[Relations (25) and (26), p. 1457]{royer2018complexity}, we have:
	\begin{equation*}
		\|d_k\| \ge \frac{3}{L_H+\eta}\epsilon_H = \frac{3}{L_H+\eta} \epsilon^{1/2} \text{ and }
	\end{equation*}
	\begin{equation*}
		\alpha_k \ge \theta \sqrt{\frac{3}{L_H+\eta}}\epsilon_H^{1/2}\|d_k\|^{-1/2} = 
		\theta \sqrt{\frac{3}{L_H+\eta}}\epsilon^{1/4}\|d_k\|^{-1/2}.
	\end{equation*}
	As a result,
	\begin{eqnarray} \label{eq:normnotunitnewtonstep}
		\alpha_k \|d_k\| & \ge &\theta\left[\frac{3}{L_H+\eta}\right]^{1/2}
		\epsilon^{1/4}\|d_k\|^{-1/2}\|d_k\| \ge  \left[\frac{3\theta}{L_H+\eta}\right]\epsilon^{1/2} \ge c  \epsilon^{1/2}.
	\end{eqnarray}

	\textbf{Case 3:} (Regularized Newton step) This case occurs when the conditions for the other two cases fail, 
	that is, when $-\epsilon^{1/2} \le \lambda_k \le \|g_k\|^{1/2}$. We again exploit the fact that 
	the stationarity is not achieved  to deduce that we necessarily have 
	$\|\tilde{g}_k\|=\min\{\|g_k\|,\|g^+_k\|\} > \epsilon$. This in turn implies that 
	$\min\{\|\tilde{g}_k\|\epsilon^{-1/2},\epsilon^{1/2}\} \ge \epsilon^{1/2}$.  
	As in the proof of the previous lemma, we apply the theory of~\cite[Proof of Lemma 4]{royer2018complexity} 
	using $\epsilon_H = \epsilon^{1/2}$. We then know that the backtracking line search terminates with the 
	step length $\alpha_k =\theta^{j_k}$, with 
	\begin{equation*}
		j_k \le \left[\log_{\theta} \left( \frac{6}{L_H+\eta}\frac{\epsilon}{U_g}\right)\right]_{+} + 1 
		= \bar{j}_{rn} +1.
	\end{equation*}
	%since $\epsilon_H = \epsilon^{1/2}$.
	We now distinguish between the cases $\alpha_k=1$ and $\alpha_k < 1$. If the unit step size is chosen, we 
	can use~\cite[relations 30 and 31]{royer2018complexity}, where $\nabla f(x_k+d_k)$ and $\epsilon_H$ 
	are replaced by $g_k^+$ and $\epsilon^{1/2}$, respectively. This gives 
	\begin{eqnarray*}
		\alpha_k \|d_k\| = \|d_k\| 
		%& \ge & \frac{-2\epsilon^{1/2}+\sqrt{4\epsilon+2L_H\|g_k^+\|}}{L_H} \\
		&\ge &\frac{1}{1+\sqrt{1+L_H/2}}\min\left\{\|g_k^+\|/\epsilon^{1/2},\epsilon^{1/2}\right\}.
	\end{eqnarray*}	
    Therefore, if the unit step is accepted, one has by~\cite[equation 31]{royer2018complexity}
    \begin{eqnarray} \label{eq:normunitregnewtonstep}
    		\alpha_k \|d_k\| 
		%&\ge &\frac{1}{1+\sqrt{1+L_H/2}}	\min\left\{\|g_k^+\|\epsilon^{-1/2},\epsilon^{1/2}\right\} \nonumber \\
    		&\ge &\frac{1}{1+\sqrt{1+L_H/2}}
    		\min\left\{\|\tilde{g}_k\|\epsilon^{-1/2},\epsilon^{1/2}\right\}.
    \end{eqnarray}
	Considering the case $\alpha_k<1$ and using~\cite[equation 32, p. 1459]{royer2018complexity}, we have:
	\begin{equation*}
		\alpha_k \ge \theta \frac{6}{L_H+\eta}\epsilon_H\|d_k\|^{-1} = 
		\frac{6\theta}{L_H+\eta}\epsilon^{1/2}\|d_k\|^{-1},
	\end{equation*}
	which leads to
	\begin{eqnarray} \label{eq:normnotunitregnewtonstep}
		\alpha_k \|d_k\| & \ge &\frac{6\theta}{L_H+\eta}\epsilon^{1/2}.
	\end{eqnarray}
	Putting~\eqref{eq:normunitregnewtonstep} and~\eqref{eq:normnotunitregnewtonstep} together, we obtain
	\begin{eqnarray*}
		\alpha_k\|d_k\| &\ge &\min\left\{\frac{1}{(1+\sqrt{1+L_H/2})^3},
		\left[\frac{6\theta}{L_H+\eta}\right]^3\right\}\, \min\{\|\tilde{g}_k\| \epsilon^{-1/2}, 
		\epsilon^{1/2}\} = c_{rn}\min\{\|\tilde{g}_k\| \epsilon^{-1/2}, \epsilon^{1/2}\} \ge c  \epsilon^{1/2}.
	\end{eqnarray*}
	
	By putting the three cases together, we arrive at the desired conclusion.
%\endproof

\subsection{Proof of Lemma~\ref{lem:bound_on_dk}}% ~\cite[Lemma~4]{bergou2020stochls}:}
Indeed, since the lemma trivially holds if $\|d_k\|=0$,  we only 
need to prove that it holds for $\|d_k\|>0$. We consider three disjoint cases:

\textbf{Case 1:} $\lambda_k < -\ecurv$. Then the negative curvature step is taken and $\|d_k\|=|\lambda_k|\le U_H$.
%If $E_k$ does not hold then $\|d_k\|\le \epsilon^{1/2}$.

\textbf{Case 2:} $\lambda_k > \|g_k\|^{1/2}$. We can suppose that $\|g_k\|>0$ because 
otherwise $\|d_k\|=0$. Then, $d_k$ is a Newton step with
$$\|d_k\|\le \|H_k^{-1}\|\|g_k\|\le \|g_k\|^{-1/2}\|g_k\|\le \|g_k\|^{1/2}\le U_g^{1/2}.$$
%If $E_k$ does not hold, then, 

\textbf{Case 3:} $-\ecurv \le \lambda_k \le \|g_k\|^{1/2}$. As in Case 2, we 
suppose that $\|g_k\|>0$ as $\|d_k\|=0$ if this does not hold. 
Then, $d_k$ is a regularized Newton step with
\[
  \|d_k\| = \| (H_k + (\|g_k\|^{1/2} + \epsilon^{1/2})\mathbbm{I}_n)^{-1} g_k\|  
   \le \frac{\|g_k\|}{\lambda_k + \|g_k\|^{1/2} + \ecurv} \le \|g_k\|^{1/2} \le U_g^{1/2}.
\]
%\begin{eqnarray*}
%\|d_k\| & = &\| (H_k + (\|g_k\|^{1/2} + \epsilon^{1/2})\mathbbm{I}_n)^{-1} g_k\|\\
%               &\le &\| (H_k + (\|g_k\|^{1/2} + \epsilon^{1/2})\mathbbm{I}_n)^{-1}\| \|g_k\|\\
%                &\le& (\lambda_k + \|g_k\|^{1/2} + \ecurv)^{-1} \|g_k\| / \\
%                &\le& \|g_k\|^{-1/2}\|g_k\|  \le U_g^{1/2},
%\end{eqnarray*}
where the last inequality uses $\lambda_k + \ecurv \ge 0$ and $\|g_k\|>0$. 
%\endproof

\subsection{Proof of Theorem~\ref{th:complexity0}}

To prove Theorem~\ref{th:complexity0}, we will combine the following three standard lemmas.
%\ref{as:f:H}
%~\ref{alg:alas}.
%\setcounter{section}{2}
%\setcounter{lemma}{4}
\begin{lemma} \label{lemma:uniformsampleHessian} 
Under Assumption 2, consider an iterate $x_k$ of 
	Algorithm 1. For any $p \in (0,1)$, if the 
	sample set $\mathcal{S}_k$ is chosen to be of size
	\begin{equation} \label{eq:unifsamplesizeHessian}
		\bar{\skn} \ge \frac{1}{N}\frac{16L^2}{\delta_H^2}\ln\left(\tfrac{2N}{1-p}\right),
	\end{equation}
	then
	\[
		\Pr\left( \left\|H(x_k;\mathcal{S}_k)-\nabla^2 f(x_k) \right\| \le \delta_H  
		\middle| \mathcal{F}_{k-1} \right) \ge p.
	\]
\end{lemma}

\begin{proof}{Proof of Lemma \ref{lemma:uniformsampleHessian}.}
	See~\cite[Lemma 16]{xu2019newton}; note that here we are using $L_i$ (Lipschitz constant 
	of $\nabla f_i$) as a bound on $\|\nabla^2 f_i(x_k)\|$, and that we are providing a 
	bound on the sampling fraction $\bar{\skn} = \tfrac{|\mathcal{S}_k|}{N}$. 
	See also~\cite[Theorem 1.1]{tropp2015matrix}, considering the norm as related to the 
	maximum singular vector. 
\end{proof}

By the same reasoning as for Lemma~\ref{lemma:uniformsampleHessian}, but in one dimension, we can 
readily provide a sample size bound for obtaining accurate function values. To this end, we define 
\begin{equation} \label{eq:fup}
	f_{\up} \ge \max_{k} \max_{i=1,..,N} f_i(x_k).
\end{equation}
Note that such a bound necessarily exists when the iterates are contained in a compact 
set. Specific structure of the problem can also guarantee such a bound, even tough it may exhibit 
dependencies on the problem's dimension. For instance, in the case of classification and logistic 
regression, one has $f_{\up}=1$, while in the case of (general) regression, one has 
$f_{\up}\le C_1+C_2\|x\|^2=\mathcal{O}(n)$. We emphasize that both of these bounds can be very 
pessimistic.

\begin{lemma} \label{lemma:uniformsamplefunction}
	Under Assumption~\ref{as:f:lower}, 
consider an iterate $x_k$ of 
	Algorithm~\ref{alg:alas}. 
For any $p \in (0,1)$, if the 
	sample set $\mathcal{S}_k$ is chosen to be of size
	\begin{equation} \label{eq:unifsamplesizefunction}
		\bar{\skn} \ge \frac{1}{N}\frac{16 f_{\up}^2}{\delta_f^2}
		\ln\left(\tfrac{2}{1-p}\right),
	\end{equation}
	then
	\[
		\Pr\left( \left|\hat f(x_k;\mathcal{S}_k)- f(x_k) \right| \le \delta_f  
		\middle| \mathcal{F}_{k-1} \right) \ge p.
	\]
\end{lemma}

\begin{proof}{Proof of Lemma \ref{lemma:uniformsamplefunction}.}
	The proof follows that of~\cite[Lemma 4.1]{xu2019newton} by considering 
	$\hat f(x_k;\mathcal{S}_k)$ and $f(x_k)$ as one-dimensional matrices.  
\end{proof}

\begin{lemma} \label{lemma:uniformsamplegrad}
	Under Assumption~\ref{as:f:H}, consider an iterate $x_k$ of 
	Algorithm~\ref{alg:alas}. For any $p \in (0,1)$, if the 
	sample set $\mathcal{S}_k$ is chosen to be of size
	\begin{equation} \label{eq:unifsamplesizegradient}
		\bar{\skn} \ge \frac{1}{N}\frac{U_g^2}{\delta_g^2}
		\left[1+\sqrt{8\ln\left(\tfrac{1}{1-p}\right)}\right]^2,
	\end{equation}
	then
	\[
		\Pr\left( \left\|g(x_k;\mathcal{S}_k)- \nabla f(x_k) \right\| \le \delta_g  
		\middle| \mathcal{F}_{k-1} \right) \ge p.
	\]
\end{lemma}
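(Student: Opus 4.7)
The plan is to express the sampling error as the deviation of an empirical mean of bounded i.i.d.\ random vectors from its expectation, and then apply a vector concentration inequality. More precisely, since $\mathcal{S}_k$ is drawn uniformly with replacement from $\{1,\dots,N\}$, conditionally on $\mathcal{F}_{k-1}$ we can write
\[
    g(x_k;\mathcal{S}_k) \;=\; \frac{1}{|\mathcal{S}_k|} \sum_{j=1}^{|\mathcal{S}_k|} Y_j,
    \qquad
    Y_j := \nabla f_{i_j}(x_k),
\]
where the indices $i_j$ are i.i.d.\ uniform over $\{1,\dots,N\}$. By construction, $\mathbb{E}[Y_j \mid \mathcal{F}_{k-1}] = \nabla f(x_k)$, and Assumption~\ref{as:finiteSGvar} yields the uniform bound $\|Y_j\| \le U_g$.

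Next I would apply a vector Hoeffding-type bound (e.g.\ McDiarmid's inequality applied to the real-valued function $\phi(Y_1,\dots,Y_n) := \|\frac{1}{n}\sum_j Y_j - \nabla f(x_k)\|$, which has bounded differences of order $U_g/n$) together with Jensen's inequality to control $\mathbb{E}[\phi]$ via $\mathbb{E}[\phi]^2 \le \mathbb{E}[\phi^2] \le U_g^2/n$. This combination yields the standard estimate
\[
    \left\| g(x_k;\mathcal{S}_k) - \nabla f(x_k) \right\|
    \;\le\; \frac{U_g}{\sqrt{|\mathcal{S}_k|}}\left[1 + \sqrt{8\ln\!\left(\tfrac{1}{1-p}\right)}\right]
\]
with conditional probability at least $p$, which is the same bound used in Lemma~4 of~\cite{roosta2019subsampled} for subsampled gradients.

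Finally, it remains to enforce that the right-hand side above is bounded by $\delta_g$. Squaring and rearranging,
\[
    |\mathcal{S}_k| \;\ge\; \frac{U_g^2}{\delta_g^2}\left[1 + \sqrt{8\ln\!\left(\tfrac{1}{1-p}\right)}\right]^2,
\]
and dividing by $N$ gives exactly condition~\eqref{eq:unifsamplesizegradient}, completing the argument.

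The main obstacle is really just choosing (and justifying) the right vector concentration inequality so as to obtain the precise constants appearing in~\eqref{eq:unifsamplesizegradient}; the rest is a direct calculation. Everything else parallels the treatments of the Hessian and function-value cases in Lemmas~\ref{lemma:uniformsampleHessian} and~\ref{lemma:uniformsamplefunction}, the only difference being that here we control a vector-valued empirical mean rather than a matrix- or scalar-valued one.
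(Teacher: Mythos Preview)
Your proposal is correct and matches the paper's approach: the paper simply cites \cite[Lemma~2]{roosta2016subsampled} for this result, and the vector concentration argument you sketch (bounded i.i.d.\ vectors, McDiarmid/bounded-differences for the norm deviation combined with a Jensen bound on the mean) is precisely the machinery behind that cited lemma. Your reference to the Roosta--Mahoney subsampled-gradient bound is the same source the paper invokes, so there is no substantive difference between your route and the paper's.
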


\begin{proof}{Proof of Lemma \ref{lemma:uniformsamplegrad}.}
	See~\cite[Lemma 2]{roosta2016subsampled}. 
\end{proof}

We now combine the three previous lemmas to obtain an overall result indicating the required $\bar{\skn}$ such that
Lemmas~\ref{lemma:uniformsampleHessian},~\ref{lemma:uniformsamplefunction} and~\ref{lemma:uniformsamplegrad}
simultaneously hold, i.e., the event $I_k$ holds. 

%\begin{proof}{Proof of~\cite[Theorem 5.]{bergou2020stochls}.}

Indeed, let $\delta_f=\tfrac{\eta}{24}c^3\epsilon^{3/2}$, 
	$\delta_g=\ccg\epsilon$, $\delta_H=\cch\epsilon^{1/2}$ and note that,
\[
I_k \equiv I_k^h \cap I_k^g \cap I_k^f,
%\{\left|\hat f(x_k;\mathcal{S}_k)- f(x_k) \right| \le \delta_f \} \cap \{\left\|g(x_k;\mathcal{S}_k)- \nabla f(x_k) \right\| \le \delta_g \}\}
\]
where 
%However, clearly these three events are not independent. Let us define,
\[
\begin{array}{l}
I_k^h:= \{\left\|H(x_k;\mathcal{S}_k)-\nabla^2 f(x_k) \right\| \le \delta_H\}\\
I_k^g:= \{\left\|g(x_k;\mathcal{S}_k)- \nabla f(x_k) \right\| \le \delta_g \} \\
I_k^f:= \{\left|\hat f(x_k;\mathcal{S}_k)- f(x_k) \right| \le \delta_f \}.
\end{array}
\]
Using the required conditions on $\skn_n$, Lemmas~\ref{lemma:uniformsampleHessian},~\ref{lemma:uniformsamplefunction} and~\ref{lemma:uniformsamplegrad} imply
$$
\mathbb{P}\left((I_k^f)^c \middle| \mathcal{F}_{k-1}\right) \le 1 - \hat p ~,~~  \mathbb{P}\left((I_k^g)^c \middle| \mathcal{F}_{k-1}\right) \le 1 - \hat p,~~\mbox{and}~~\mathbb{P}\left((I_k^h)^c \middle| \mathcal{F}_{k-1}\right) \le 1 - \hat p.
$$
%clearly these three events are not independent. 
In the other hand, one has
%In general,
\begin{equation*}\label{eq:probI}
\begin{array}{l}
\mathbb{P}\left((I_k)^c \middle| \mathcal{F}_{k-1} \right)= 
\mathbb{P}\left((I_k^f)^c \middle| \mathcal{F}_{k-1}\right)+ \mathbb{P}\left((I^g_k)^c \middle| \mathcal{F}_{k-1}\right)+
\mathbb{P}\left((I^g_k)^c \middle| \mathcal{F}_{k-1}\right) -\mathbb{P}\left((I_k^f)^c\cap (I_k^g)^c \middle| \mathcal{F}_{k-1}\right) \\
\qquad\qquad -\mathbb{P}\left((I_k^g)^c\cap (I_k^h)^c \middle| \mathcal{F}_{k-1}\right)
-\mathbb{P}\left((I_k^f)^c\cap (I_k^h)^c \middle| \mathcal{F}_{k-1}\right)+\mathbb{P}\left((I_k^f)^c\cap (I_k^g)^c\cap (I_k^h)^c \middle| \mathcal{F}_{k-1}\right) \\ 
\qquad \le  \mathbb{P}\left((I_k^f)^c \middle| \mathcal{F}_{k-1}\right)+ \mathbb{P}\left((I_k^g)^c \middle| \mathcal{F}_{k-1}\right)+
\mathbb{P}\left((I_k^h)^c \middle| \mathcal{F}_{k-1}\right)  \\ \qquad\qquad
+\mathbb{P}\left((I_k^f)^c \middle| (I_k^h)^c , (I_k^g)^c , \mathcal{F}_{k-1}\right)\mathbb{P}\left((I_k^h)^c \middle| (I_k^g)^c , \mathcal{F}_{k-1}\right)
\mathbb{P}\left((I_k^g)^c \middle| \mathcal{F}_{k-1}\right) \\ 
\qquad
\le \mathbb{P}\left((I_k^f)^c \middle| \mathcal{F}_{k-1}\right)+ 2\mathbb{P}\left((I_k^g)^c \middle| \mathcal{F}_{k-1}\right)+
\mathbb{P}\left((I_k^h)^c \middle| \mathcal{F}_{k-1}\right) \le 4(1-\hat p).
\end{array}
\end{equation*}
Hence,
%
%We have, from~\eqref{eq:probI} that,
%\[
%\begin{array}{l}
%\mathbb{P}\left((I_k)^c \middle| \mathcal{F}_{k-1} \right) \le \mathbb{P}\left((I_k^f)^c \middle| \mathcal{F}_{k-1}\right)+ 2\mathbb{P}\left((I_k^g)^c \middle| \mathcal{F}_{k-1}\right) +\mathbb{P}\left((I_k^h)^c \middle| \mathcal{F}_{k-1}\right) 
% \le 4(1-\hat p)
%\end{array}
%\]
%And so,
\[
\mathbb{P}(I_k|\mathcal{F}_{k-1}) \ge 1-4(1-\hat p) = p,
\]
meaning that the model sequence is $p$-probabilistically $(\delta_f,\delta_g,\delta_H)$-accurate, 
thus results from Section~\ref{subsec:cvwcc:iterwcc} hold. 
$\square$
%\end{proof}

\subsection{Proof of Proposition~\ref{prop:stop}}

%\proof{Proof of Proposition~\ref{prop:stop}.}
	In this proof, we will use the notation $\Pr_k(\dots) = \Pr(\cdot|\mathcal{F}_{k-1})$, 
	as well as the random events
	\[
		\begin{array}{lll}
		E &= &\left\{\mbox{One of the iterates in
		$\{x_{k+j}\}_{j=0..J}$ is $((1+\ccg)\epsilon,(1+\cch)\epsilon^{1/2})$-function stationary}
		\right\}, \\
		 & & \\
 		E_j &= &\left\{\mbox{The iterate
 		$x_{k+j}$ is $(\epsilon,\epsilon^{1/2})$-model stationary}\right\} ~~~
 		\forall j=0,\dots,J.
 		\end{array}
	\]
	For every $j=0,\dots,J$, we have $E_j \in \mathcal{F}_{k+j}$ and $I_{k+j} \in \mathcal{F}_{k+j}$ 
	(where $I_j$ is the event introduced in Definition~\ref{def:accurate:proba}). 		
	Moreover, the events $E_j$ and $I_{k+j}$ are conditionally independent:
	\begin{equation} \label{eq:condindeppropstop}
		\forall j=0,\dots,J, \quad \Pr_k\left( E_j \cap I_{k+j} \middle| \mathcal{F}_{k+j-1} \right) \; = \;
		\Pr_k\left( E_j  \middle| \mathcal{F}_{k+j-1} \right) 
		\Pr_k\left( I_{k+j}  \middle| \mathcal{F}_{k+j-1} \right).
	\end{equation}
	This conditional independence holds because 
	$x_{k+j} \in \mathcal{F}_{k+j-1}$, and the model $\hat f_{k+J}$ is constructed 
	independently of $x_{k+j}$ by assumption.
	Using these events, we can reformulate the statement of the theorem as
	\begin{equation*} 
		\Pr_k\left( E \middle| E_0,\dots,E_J\right) \; \ge \; 1-(1-p)^{J+1}.
	\end{equation*}
	Now, by Lemma~\ref{lemma:modeltruestationarity}, 
	\[
		\Pr_k\left( E \middle| E_0,\dots,E_J\right) \; \ge \; 
		\Pr_k\left( \bigcup_{0 \le j \le J} I_{k+j}  \middle| E_0,\dots,E_J \right) 
		= 1-\Pr_k\left( \bigcap_{0 \le j \le J} \bar{I}_{k+j} \middle| E_0,\dots,E_J \right) .
	\]
	Thus,to obtain the desired result, it suffices to prove that
	\begin{equation}\label{eq:goalpropstop}
		\Pr_k\left( \bigcap_{0 \le j \le J} \bar{I}_{k+j} \middle| E_0,\dots,E_J \right) 
		\; \le \; (1-p)^{J+1}.
	\end{equation}
	We now make use of the probabilistically accuracy property. For every $j=0,\dots,J$, we have 
	\begin{equation} \label{eq:condprobapropstop}
		\Pr_k\left(I_{k+j} | A \right) \ge p, 
	\end{equation}
	for any set of events $A$ belonging to the $\sigma$-algebra 
	$\mathcal{F}_{k+j-1}$~\cite[Chapter 5]{durrett2010probatheory}. In particular, for any $j \ge 1$, 
	$\Pr_k\left(I_{k+j}|I_k,\dots,I_{k+j-1},E_0,\dots,E_j\right) \ge p$.
	Returning to our target probability, we have:
	{\small
	\begin{eqnarray*}
	&&\Pr_k\left( \bigcap_{0 \le j \le J} \overline{I}_{k+j} \middle| E_0,\dots,E_J \right) 
		=\frac{\Pr_k\left(\left\{\bigcap_{0 \le j \le J} \bar{I}_{k+j}\right\} \cap E_J 
		\middle| E_0,\dots,E_{J-1} \right)}{\Pr_k\left(E_J \middle| E_0,\dots,E_{J-1} \right)} \\
		&&\quad= \frac{\Pr_k\left(\bar{I}_J \cap E_J \middle| E_0,\dots, E_{J-1},I_k,\dots,I_{k+J-1}\right)
		\Pr_k\left( \cap_{0 \le j \le J-1} \bar{I}_{k+j} \middle|\ E_0,\dots,E_{J-1}\right)}
		{\Pr_k\left(E_J \middle| E_0,\dots,E_{J-1} \right)} \\
		&&\quad=  
		\frac{\Pr_k\left(\bar{I}_J \middle| E_0,\dots, E_{J-1},I_k,\dots,I_{k+J-1}\right) 
		\Pr_k\left( \cap_{0 \le j \le J-1} \bar{I}_{k+j} \middle|\ E_0,\dots,E_{J-1}\right)\Pr_k\left(E_J \middle| E_0,\dots, E_{J-1},I_k,\dots,I_{k+J-1}\right)}
		{\Pr_k\left(E_J \middle| E_0,\dots,E_{J-1} \right)} \\
		&&\quad \le \Pr_k\left(\bar{I}_J \middle| E_0,\dots, E_{J-1},I_k,\dots,I_{k+J-1}\right) 
		\Pr_k\left( \cap_{0 \le j \le J-1} \bar{I}_{k+j} \middle|\ E_0,\dots,E_{J-1}\right),
	\end{eqnarray*}
	}
	where the last equality comes from~\eqref{eq:condindeppropstop}, and the final inequality uses the 
	fact that the events $E_0,\dots,E_{J-1}$ and $I_k,\dots,I_{k+J-1}$ are pairwise independent, thus 
	\[
		\Pr_k\left(E_J \middle| E_0,\dots, E_{J-1},I_k,\dots,I_{k+J-1}\right) \; = \; 
		\Pr_k\left(E_J \middle| E_0,\dots,E_{J-1} \right).
	\]
	Using~\eqref{eq:condprobapropstop}, we then have that 
	\[
		\Pr_k\left(\bar{I}_J \middle| E_0,\dots, E_{J-1},I_k,\dots,I_{k+J-1}\right) = 
		1-\Pr_k\left(I_J \middle| E_0,\dots, E_{J-1},I_k,\dots,I_{k+J-1}\right) \le 
		1-p.
	\]
	Thus,
	\begin{equation} \label{eq:recursivepropstop}
		\Pr_k\left( \bigcap_{0 \le j \le J} \bar{I}_{k+j} \middle| E_0,\dots,E_J \right) 
		\le (1-p)\Pr_k\left( \bigcap_{0 \le j \le J-1} \bar{I}_{k+j} \middle| E_0,\dots,E_{J-1} \right).
	\end{equation}
	By a recursive argument on the right-hand side of~\eqref{eq:recursivepropstop}, we thus arrive 
	at~\eqref{eq:goalpropstop}, which yields the desired conclusion.
%\endproof

\subsection{Proof of Proposition~\ref{prop:complexityJ}}

%\proof{Proof of Proposition~\ref{prop:complexityJ}.}
	As in Theorem~\ref{th:complexity0}, $\Tj^m$ clearly is a stopping time. 
	Moreover, if $\skn_k=1$ for all $k$, then $\Tj^m=T_{\epsilon}$ for every $J$, where 
	$T_{\epsilon}$ is the stopping time defined in Theorem~\ref{th:complexity0}, and therefore the 
	result holds. In what follows, we thus focus on the remaining case.

	Consider an iterate $k$ such that $x_k$ is $(\hat{\epsilon},\hat{\epsilon}^{1/2})$-function 
	stationary and the model $\hat f_k$ is accurate. From the definition of $\hat{\epsilon}$, such an iterate 	
	is also $((1-\kappa_g)\epsilon,(1-\kappa_H)\epsilon^{1/2})$-function stationary and the model $\hat f_k$ is 
	accurate. Then, by a reasoning similar to that of the proof of 
	Lemma~\ref{lemma:modeltruestationarity}, we can show that $x_k$ is $(\epsilon,\epsilon^{1/2})$-model 
	stationary.
	As a result, if $\Tj^m > k$, one of the models $\hat f_k,\hat f_{k+1},\dots,\hat f_{k+J}$ must be inaccurate, which 
	happens with probability $1-p^{J+1}$.

	Let $\Tj$ be the first iteration index for which the iterate is a 
	$(\hat{\epsilon},\hat{\epsilon}^{1/2})$  function stationary point and 
	satisfies~\eqref{eq:Jp1modelstationarity}\revised{, i. e.
	\begin{equation*}
		\min\{\|g_k\|,\|g^+_k\|\} < \epsilon  \quad \mbox{and} \quad \lambda_k > -\epsilon^{1/2}, \quad
		\forall k \in \{\Tj^m,\Tj^m+1,...,\Tj^m+J\}.
	\end{equation*}	
	}
	Clearly $\Tj^m \le \Tj$ (for all realizations of these two 
	stopping times), and it thus suffices to bound $\Tj$ in expectation. By applying 
	Theorem~\ref{th:complexity0} (with $\epsilon$ in the theorem's statement replaced by 
	$\hat{\epsilon}$), one can see that there must exist an infinite number of 
	$(\hat{\epsilon},\hat{\epsilon}^{1/2})$-function stationary points in expectation. More precisely, 
	letting $\{T_{\hat{\epsilon}}^{(i)}\}_{i=1,\dots}$ be the corresponding stopping times indicating the 
	iteration indexes of these points and using Theorem~\ref{th:complexity0},
	%(excluding the case 
	%$\skn_k=1\ \forall k$), 
	we have
	\begin{eqnarray*}
		\E{T_{\hat{\epsilon}}^{(1)}} = \E{T_{\hat{\epsilon}}} &\le 
		&\frac{\left(f(x_0)- f_{\low}\right)}{p \hat{c}} 
		\,\hat{\epsilon}^{-3/2}+1, \\
		\forall i\ge 1,\quad \E{T_{\hat{\epsilon}}^{(i+1)}-T_{\hat{\epsilon}}^{(i)}} 
		&\le &\frac{\left(f(x_0)- f_{\low}\right)}{p\hat{c}} 
		\,\hat{\epsilon}^{-3/2}+1.
	\end{eqnarray*}
	Consider now the subsequence $\{T_{\hat{\epsilon}}^{(i_{\ell})}\}_{\ell=1,\dots}$ such that all 
	stopping times are at least $J$ iterations from each other, i.e., for every $\ell\ge 1$, we have 
	$T_{\hat{\epsilon}}^{(i_{\ell+1})}-T_{\hat{\epsilon}}^{(i_{\ell})}\ge J$. 
	For such a sequence, we get
	\begin{eqnarray*}
		\forall \ell \ge 1,\quad \E{T_{\hat{\epsilon}}^{(i_{\ell+1})}-T_{\hat{\epsilon}}^{(i_{\ell})}} 
		&\le &\frac{\left(f(x_0)- f_{\low}\right)}{p\hat{c}}\,	
		\hat{\epsilon}^{-3/2}+J+1 \triangleq K(\epsilon,J) \\
		\E{T_{\hat{\epsilon}}^{(i_1)}} = \E{T_{\hat{\epsilon}}} 
		&\le &\frac{\left(f(x_0)- f_{\low}\right)}{p\hat{c}} 
		\,\hat{\epsilon}^{-3/2}+1 \le  K(\epsilon,J).
	\end{eqnarray*}

	For every $\ell \ge 1$, we define the event 
	\[
		B_{\ell} \; = \; \bigcap_{j=0}^{J}\, I_{T_{\hat{\epsilon}}^{(i_{\ell})}+j} \; = \; 
		\bigcap_{j=0}^{J} \left\{ m_{T_{\hat{\epsilon}}^{(i_{\ell})}+j} \mbox{\ is\ accurate}\right\}.
	\]
	\revised{ By Assumption~\ref{as:modifalgo}, the samples are generated independently of the current iterate, and for every 
	$k$, $\Pr(I_k|\mathcal{F}_{k-1}) = p$. By the same recursive reasoning as in the proof of 
	Proposition~\ref{prop:stop}, we have that 
	$\Pr\left(B_{\ell}|\mathcal{F}_{T_{\hat{\epsilon}}^{(i_{\ell})}-1}\right) = p^{J+1}$.
	Moreover, by definition of the sequence $\{T_{\hat{\epsilon}}^{(i_{\ell})}\}$, two stopping times in 
	that sequence correspond to two iteration indexes distant of at least $J+1$. Therefore, they also 
	correspond to two separate sequences of $(J+1)$ models that are generated in an independent fashion. 
	We can thus consider $\{B_{\ell}\}$ to be an independent sequence of Bernoulli trials.
	Therefore, the variable $G$ representing the number of runs of $B_{\ell}$ until success follows a 
	geometric distribution with an expectation less than $\frac{1}{p^{J+1}}=p^{-(J+1)}<\infty$.} On the 
	other hand, $\Tj$ is less than the first element of $\{T_{\hat{\epsilon}}^{(i_{\ell})}\}$ for which 
	$B_{\ell}$ happens, and thus $\Tj \leq T_{\hat{\epsilon}}^{(i_G)}$. 
	To conclude the proof, we define 
	\[
		S_G = T_{\hat{\epsilon}}^{(i_G)}, \qquad 
		X_1 = T_{\hat{\epsilon}}^{(i_{1})} = T_{\hat{\epsilon}}^{1}, \quad
		X_{\ell} = T_{\hat{\epsilon}}^{(i_{\ell})} - T_{\hat{\epsilon}}^{(i_{\ell-1})} \quad 
		\forall \ell \ge 2.
	\]
	From the proof of Wald's equation~\cite[Theorem 4.1.5]{durrett2010probatheory} (more precisely, from 
	the third equation appearing in that proof), one has 
	\[
		\E{S_G} = \sum^{\infty}_{\ell=1} \E{X_{\ell}} \Pr({G\ge \ell}).
	\]
	Since $\E{X_\ell} \le K(\epsilon,J)$, one arrives at
	\[
		\E{\Tj^m} \le \E{\Tj}  \le \E{T_{\hat{\epsilon}}^{(i_G)}} \le 
		K(\epsilon,J) \sum^{\infty}_{\ell=1} \Pr({G\ge \ell}) = K(\epsilon,J)  \E{G},
	\] 
	which is the desired result.
%\endproof

\section{More details on numerical results}

\subsection{ALAS algorithm as implemented}
Our implementation of the ALAS algorithm is described in Algorithm~\ref{alg:alasi}. The main differences between Algorithm~\ref{alg:alasi} and Algorithm~\ref{alg:alas} are described in the main paper.

\begin{algorithm}
	\SetAlgoLined
	\setcounter{algocf}{1}
	\DontPrintSemicolon 
	\BlankLine
	\textbf{Initialization}: Choose $x_0 \in \real^n$, $\theta \in (0,1), \eta > 0$, $\epsilon>0$.\;
	\For{$k=0,1,...$}{
		\begin{enumerate}
			\item Draw a random sample set $\mathcal{S}_k \subset \{1,\dots,N\}$, and compute the 
			associated quantities $g_k:=g(x_k;\mathcal{S}_k), H_k:=H(x_k;\mathcal{S}_k)$. Form the model:
			\begin{equation} 
				\hat f_k(x_k+s) := \hat f(x_k+s;\mathcal{S}_k).
			\end{equation}

                        \item Compute $R_k = \tfrac{g_k^T H_k g_k}{\|g_k\|^2}$.
                        \item If $R_k<-\|g_k\|^{1/2}$ then set $d_k=\tfrac{R_k}{\|g_k\|}g_k$ and go to the Line-search step.
                        \item Else if $R_k<\|g_k\|^{1/2}$ and $\|g_k\|\ge \epsilon$ then set $d_k=-\frac{g_k}{\|g_k\|^{1/2}}$ and go to the Line-search step.
			\item Compute $\lambda_k$ as the minimum eigenvalue of the Hessian estimate $H_k$.\\ 
			If $\lambda_k \ge -\ecurv$ and $\|g_k\|=0$ set $\alpha_k=0,\ d_k=0$ and go to Step 10.
			\item If $\lambda_k < -\|g_k\|^{1/2}$, 
%			\item If \textcolor{magenta}{$\lambda_k < -\epsilon^{1/2}$}, 
%			\item If $\lambda_k < -\|g_k\|^{1/2}$, 
                              Compute a negative eigenvector $v_k$ such that
			\begin{equation} 
				H_k v_k = \lambda_k v_k,\ \|v_k\| = -\lambda_k,\ v_k^\top g_k \le 0,
			\end{equation}
			set $d_k=v_k$ and go to the line-search step.
             \item If $\lambda_k>\|g_k\|^{1/2}$, compute a Newton direction $d_k$ solution of
			\begin{equation} 
				H_k d_k = -g_k,
			\end{equation}
			go to the line-search step.
			\item If $d_k$ has not yet been chosen, compute it as a regularized Newton direction, solution of
			\begin{equation} 
				\left(H_k+(\|g_k\|^{1/2}+\ecurv) \mathbbm{I}_n\right)d_k = -g_k,
%				\left(H_k+(\|g_k\|^{1/2}+\epsilon^{1/2}) \mathbbm{I}_n\right)d_k = -g_k,
			\end{equation}
			and go to the line-search step.
			\item \textbf{Line-search step} Compute the minimum index $j_k$ such that the 
			step length\\ $\alpha_k:=\theta^{j_k}$ satisfies the decrease condition:
			\begin{equation} \label{eq:suff:cond2}
				\hat f_k(x_k+\alpha_k d_k) - \hat f_k(x_k) \; \le \; -\frac{\eta}{2}\alpha_k^2\|d_k\|^2.
			\end{equation}

			\item Set $x_{k+1}=x_k+\alpha_k d_k$.
			\item Set $k=k+1$.
		\end{enumerate}
	} 
\caption{ALAS, as implemented. \label{alg:alasi}}
\end{algorithm}

\subsection{Distribution of Steps}
\paragraph{IJCNN Dataset}

\revised{We have included most of the visual information for the runs on the IJCNN in the main part of the paper.
One additional consideration to verify the stability of the algorithm is to perform 
a sensitivity analysis of the hyperparameters of ALAS.
There are two hyperparameters in the ALAS algorithm, $\eta$ and $\epsilon$. In deterministic contexts, $\eta$ is typically
kept small to allow for fairly liberal step acceptance. We studied the impact of varying this hyperparameter on the 
performance of ALAS, reporting our results in Figure~\ref{fig:sens_IJCNN}. We confirm that indeed for reasonably
small values the precise value has limited impact on the convergence. This is by contrast with SGD wherein the learning
rate has a very significant impact on the performance and typically has to be tuned for each problem.
\begin{figure}
    \centering
    \subfigure{
    \includegraphics[width=7.5cm, height=4.1cm]{./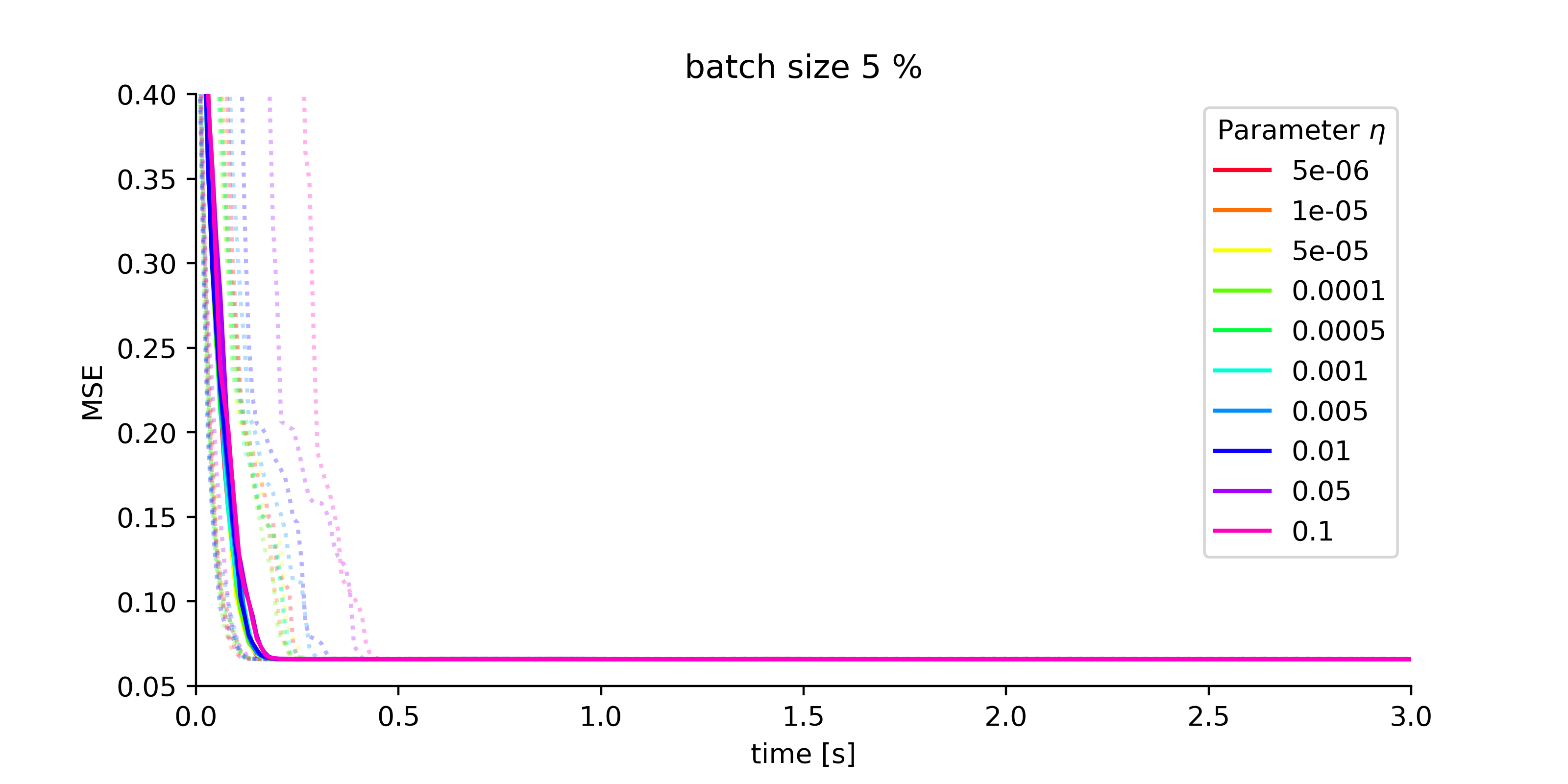}\label{fig:sens_IJCNN_nn_1}
    }
    ~ %add desired spacing between images, e. g. ~, \quad, \qquad, \hfill etc. 
      %(or a blank line to force the subfigure onto a new line)
    \subfigure{
    \includegraphics[width=7.5cm, height=4.1cm]{./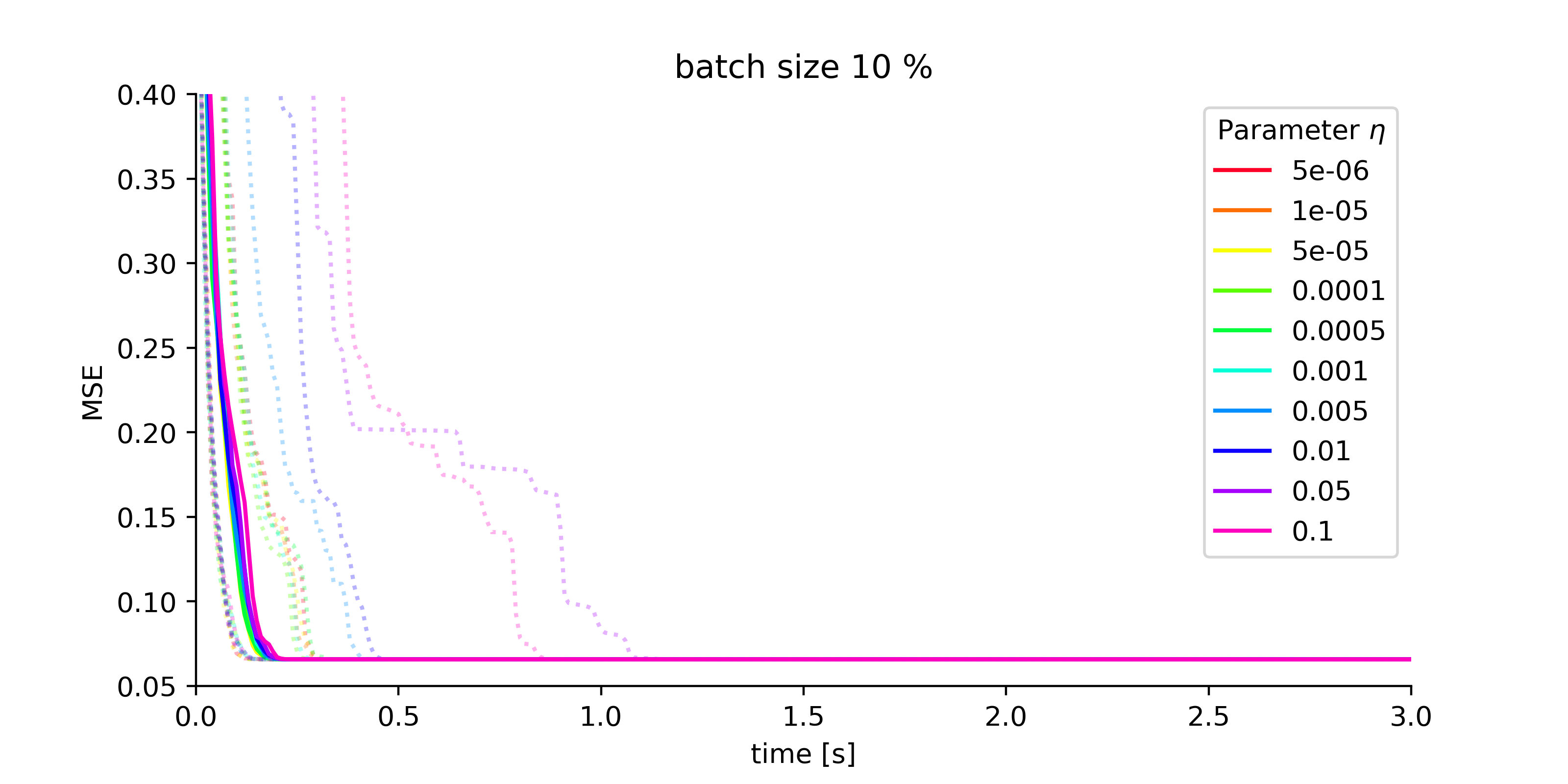}\label{fig:sens_IJCNN_nn_2}
    }
     \\
    \subfigure{
    \includegraphics[width=7.5cm, height=4.1cm]{./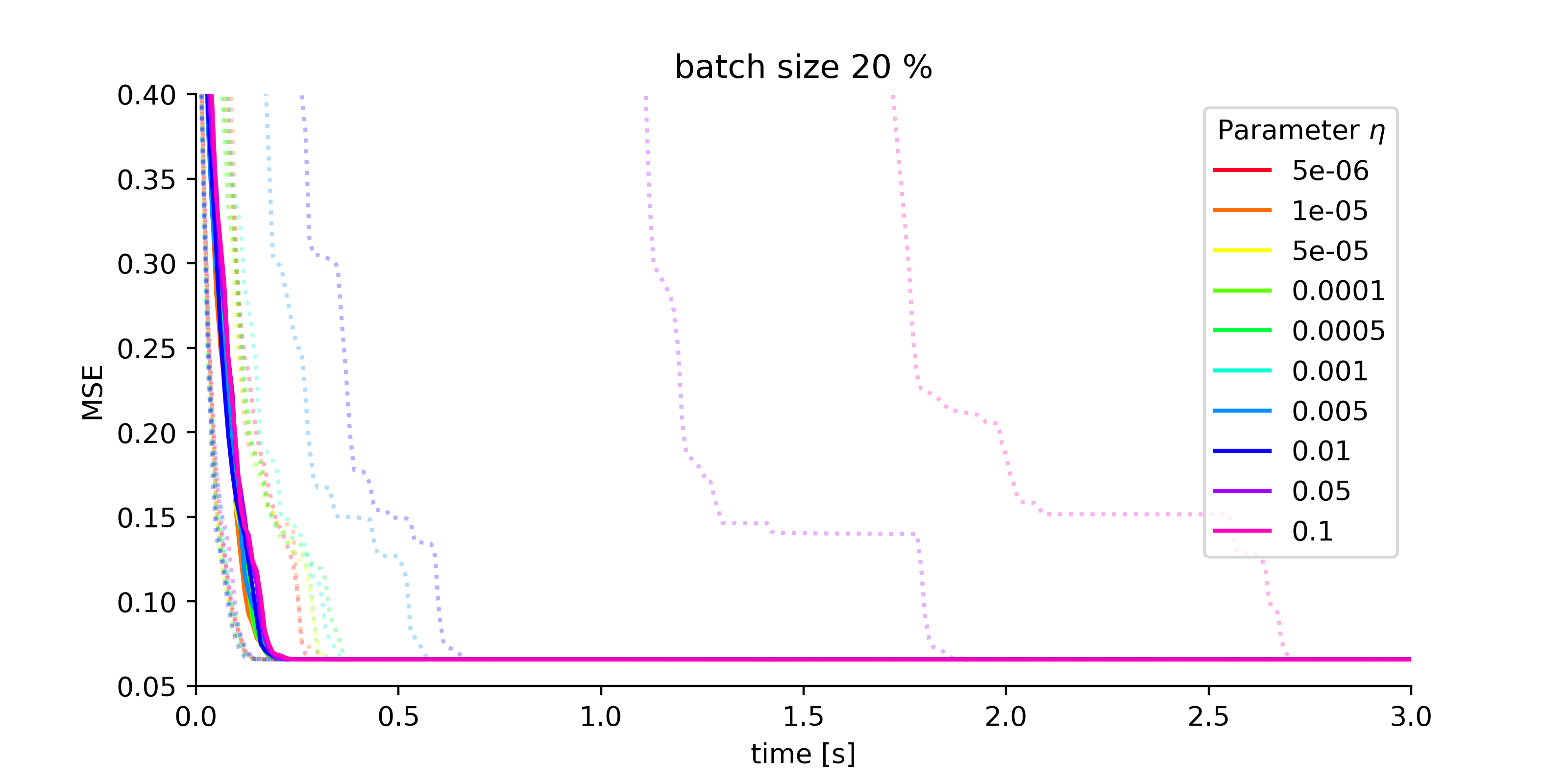}\label{fig:sens_IJCNN_nn_3}
    }
    ~ %add desired spacing between images, e. g. ~, \quad, \qquad, \hfill etc. 
      %(or a blank line to force the subfigure onto a new line)
    \subfigure{
    \includegraphics[width=7.5cm, height=4.1cm]{./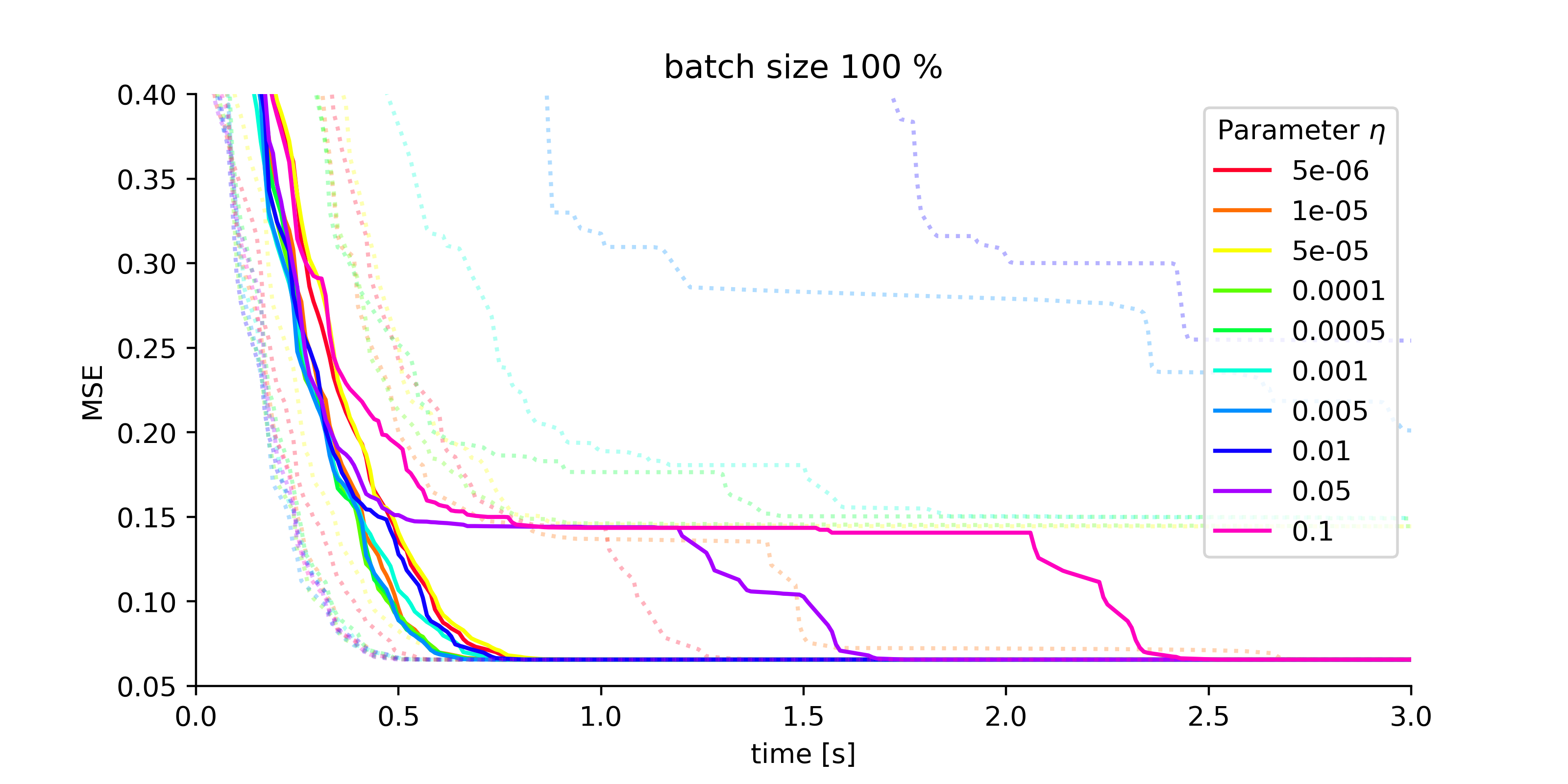}\label{fig:sens_IJCNN_nn_4}
    }
    \caption{The sensitivity of the performance of the ALAS algorithm on the IJCNN1 task as depending on the choice of $\eta$.}
    \label{fig:sens_IJCNN}
\end{figure}
The sensitivity of the Algorithm with respect to $\epsilon$ is shown in Figure~\ref{fig:sens_IJCNN_eps}. It can be seen that here, as well,
the Algorithm is not particularly sensitive. Although the Regularized Newton step, which depends on $\epsilon$ explicitly, is chosen
the most, it appears that it is modified in an appropriate space so as to mitigate any degradation of performance. 
\begin{figure}
    \centering
    \subfigure{
    \includegraphics[width=7.5cm, height=4.1cm]{./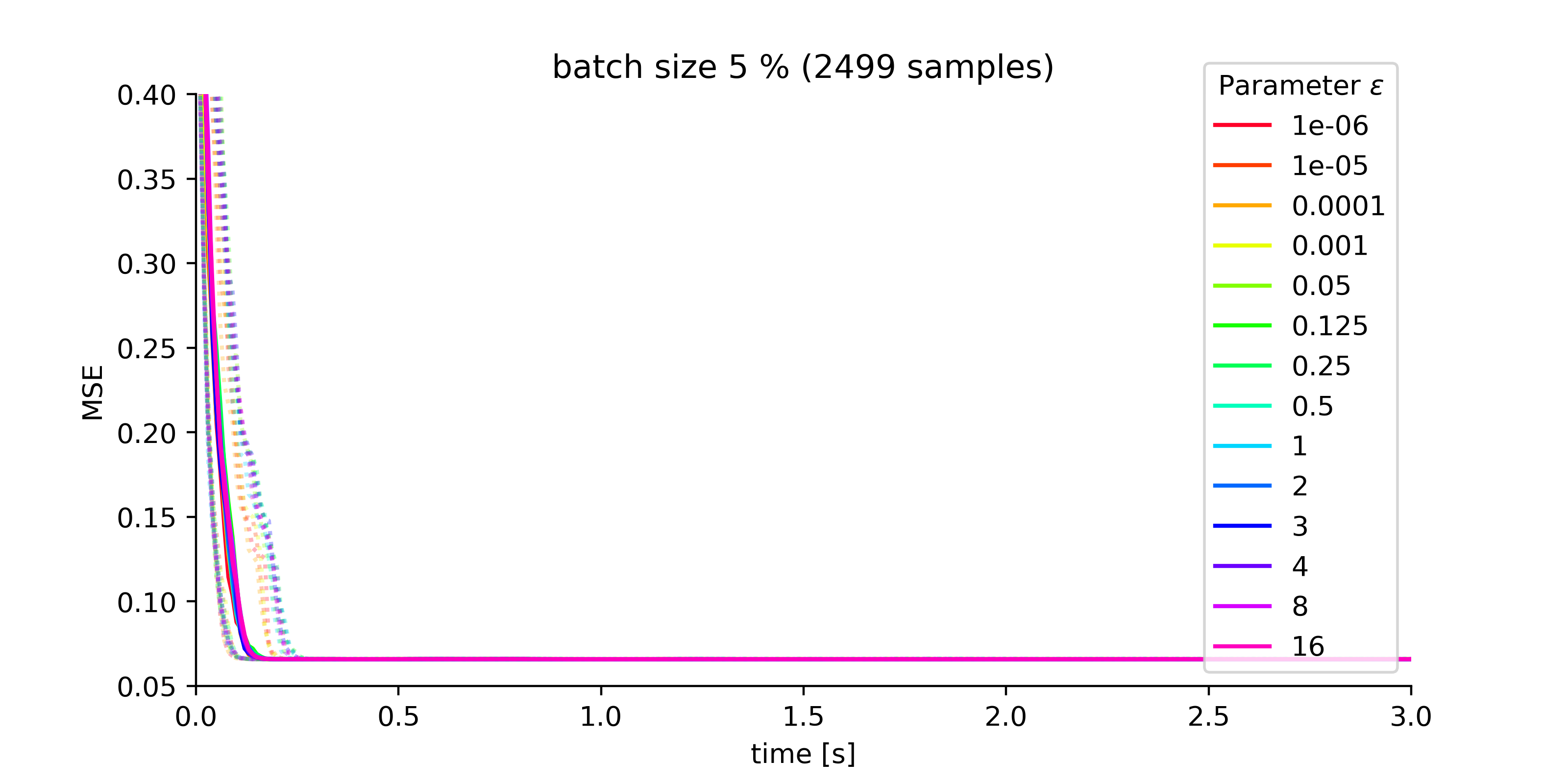}\label{fig:sens_IJCNN_nn_epsilon_1}
    }
    ~ %add desired spacing between images, e. g. ~, \quad, \qquad, \hfill etc. 
      %(or a blank line to force the subfigure onto a new line)
    \subfigure{
    \includegraphics[width=7.5cm, height=4.1cm]{./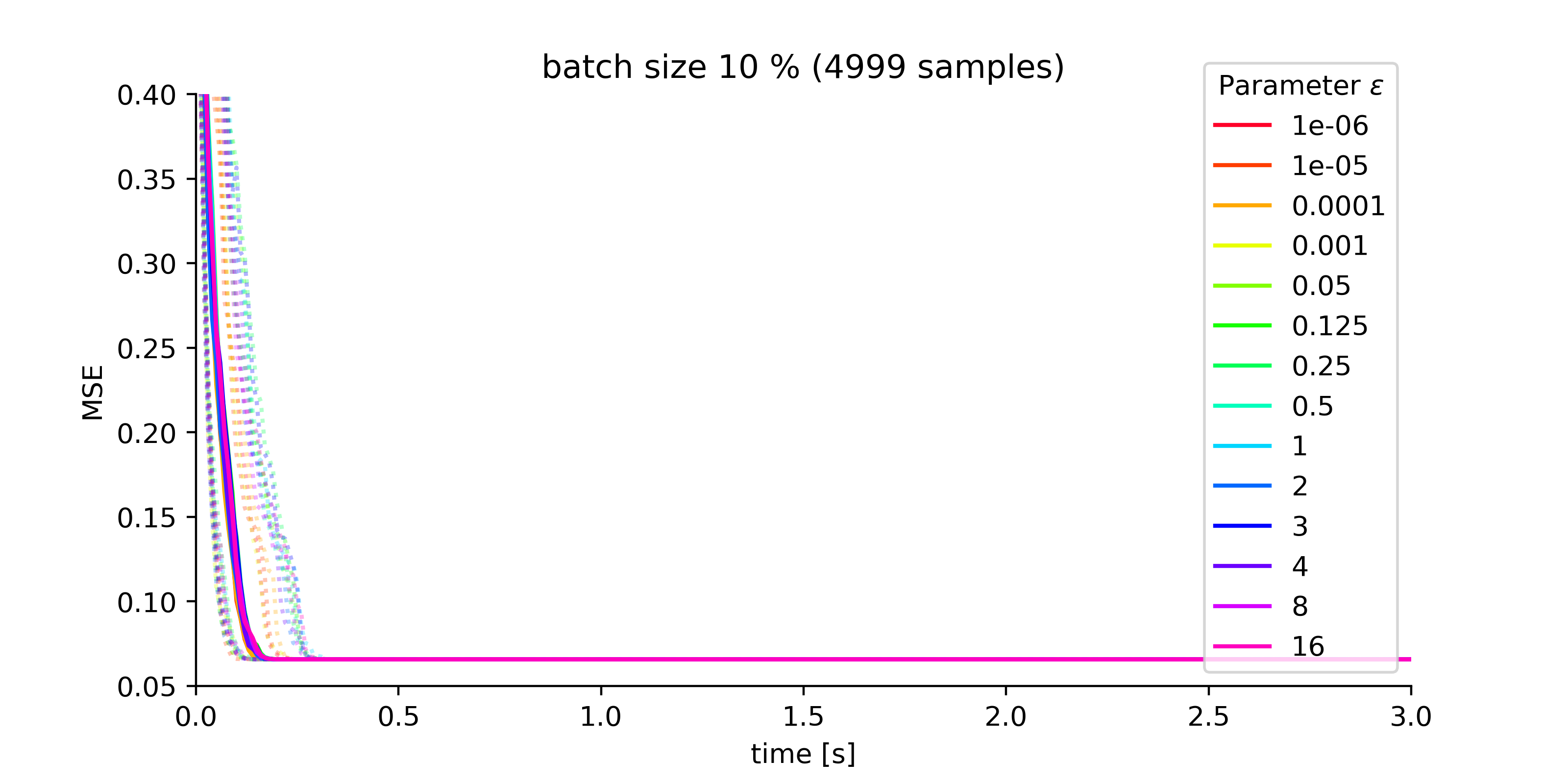}\label{fig:sens_IJCNN_nn_epsilon_2}
    }
     \\
    \subfigure{
    \includegraphics[width=7.5cm, height=4.1cm]{./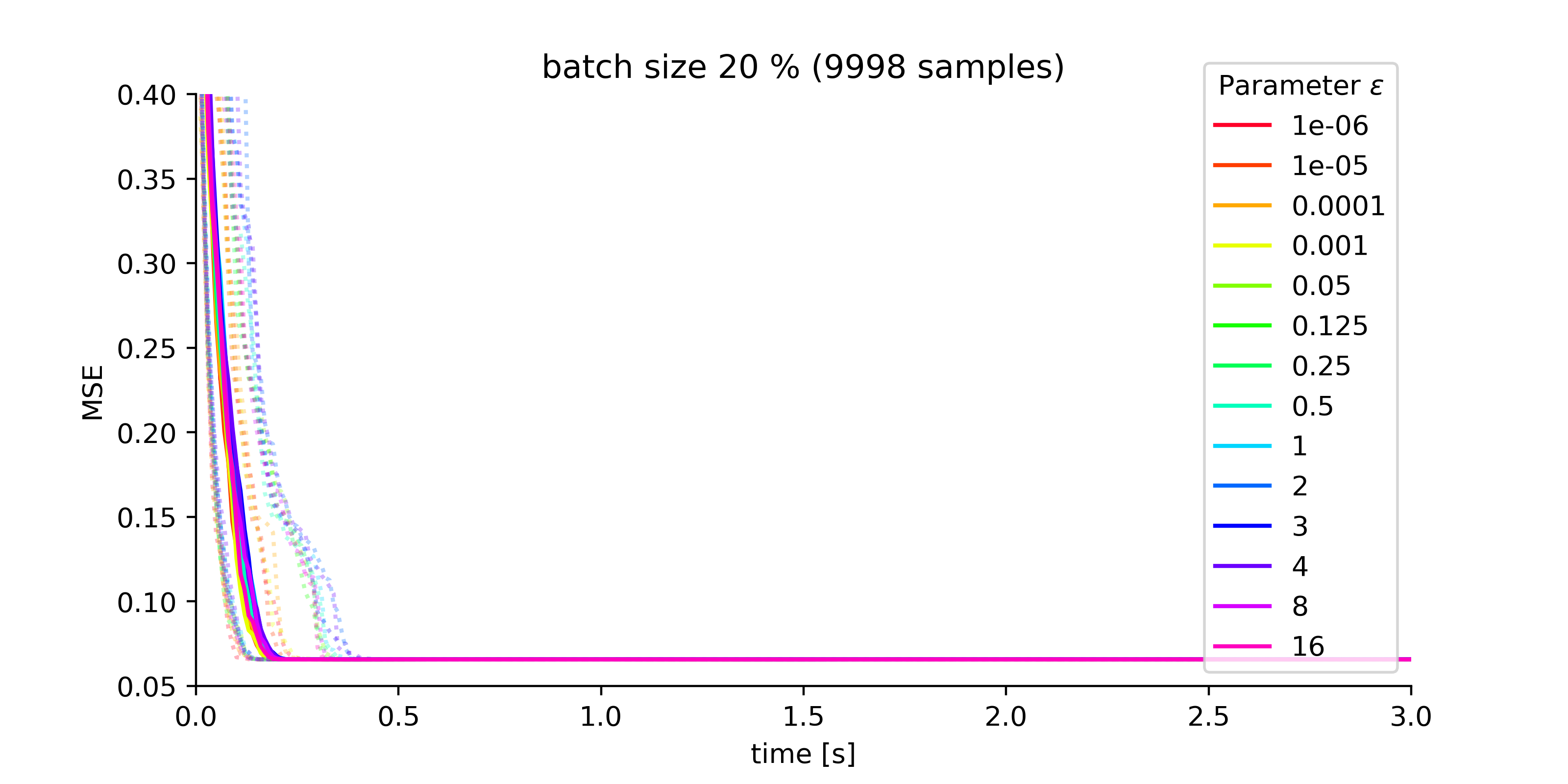}\label{fig:sens_IJCNN_nn_epsilon_3}
    }
    ~ %add desired spacing between images, e. g. ~, \quad, \qquad, \hfill etc. 
      %(or a blank line to force the subfigure onto a new line)
    \subfigure{
    \includegraphics[width=7.5cm, height=4.1cm]{./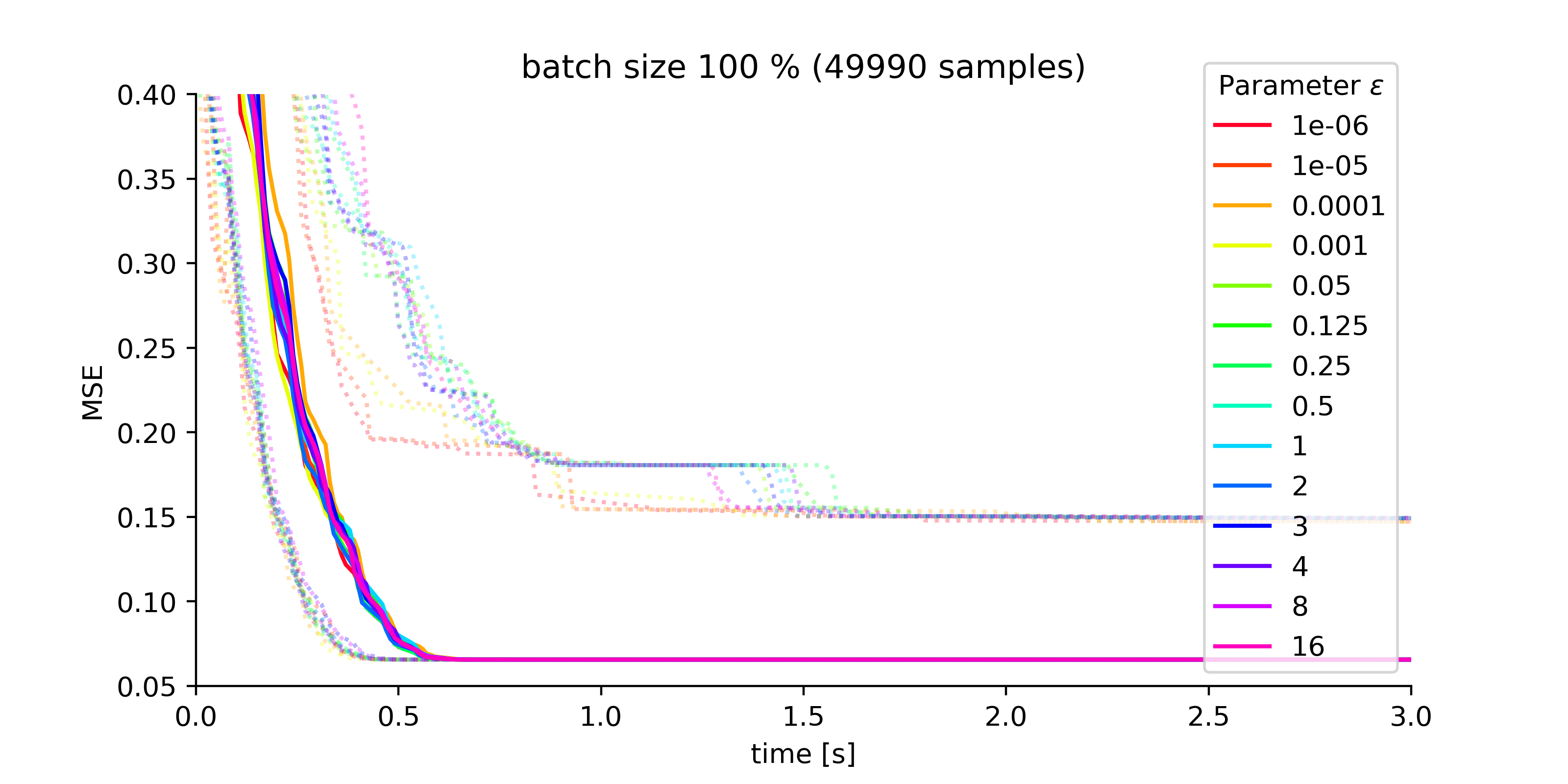}\label{fig:sens_IJCNN_nn_epsilon_4}
    }
    \caption{The sensitivity of the performance of the ALAS algorithm on the IJCNN1 task as depending on the choice of $\epsilon$.}
    \label{fig:sens_IJCNN_eps}
\end{figure}
}

\paragraph{A9A Dataset}

The distribution of the individual type of steps as described in Algorithm~\ref{alg:alasi} for selected runs for the A9A task is shown in Figure~\ref{fig:steps_A9A}. As in the case of IJCNN1 task, the most frequently used step type is the \textit{Regularized Newton}. We note that when a full sample is used, \textit{Negative curvature} steps are more common, suggesting that the problem is quite nonconvex. Note that in all of our runs (including those not reported here), other step choices were used less than 10 times.
%\crnote{So these types were computed, but they do not appear on the graph?}\vnote{The graphs were shown only for few representative runs. This sentence is holds both for the shown figures (albeit there is only one additional type) but also for other runs for this task.}
%As in the IJCNN1 task, the most frequently used step type is the \textit{Regularized Newton} which was the most frequent step type. 
% The other types were comparably infrequent --- with the exception of the \textit{Negative curvature} step, the individual step types were observed less than 10 times 
%\vnote{Better wording here would be nice, I do not like this but I was not able to come up with something sounding more elegant}
%\ebnote{ I reformulated bit the comments here, the old text is commented}
The distributions of number of line-search iterations for selected runs of the ALAS algorithm are shown in Figure~\ref{fig:line_search_hist_A9A}.

\begin{figure}
    \centering
    \subfigure[123-1 5\%]{
    \includegraphics[width=7.5cm, height=4.1cm]{./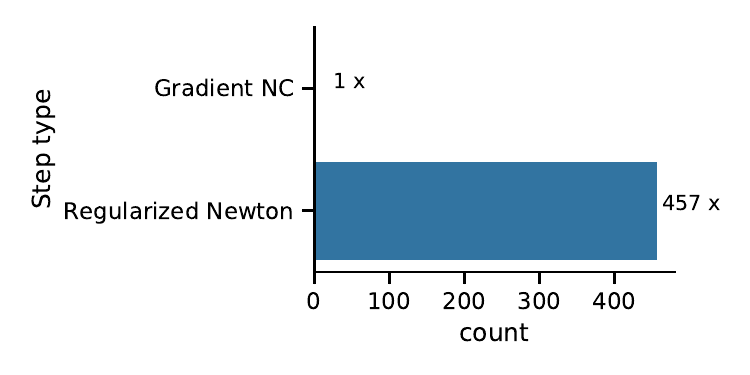}\label{fig:steps_A9A_nn_123-1_5}
    }
    ~ %add desired spacing between images, e. g. ~, \quad, \qquad, \hfill etc. 
      %(or a blank line to force the subfigure onto a new line)
    \subfigure[123-1 100\%]{
    \includegraphics[width=7.5cm, height=4.1cm]{./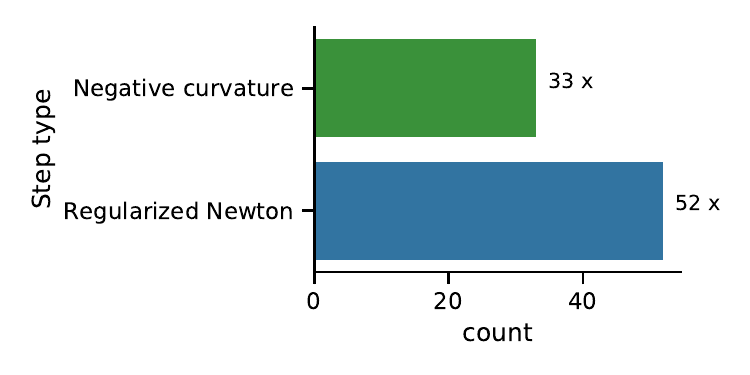}\label{fig:steps_A9A_nn_123-1_100}
    }
    \caption{The step type distribution of a single run of  ALAS algorithm on the A9A task for the 123-1 architecture and two sampling sizes.}
    \label{fig:steps_A9A}
\end{figure}

\begin{figure}
    \centering
    \subfigure[123-1 5\%]{
    \includegraphics[width=5cm, height=3.5cm]{./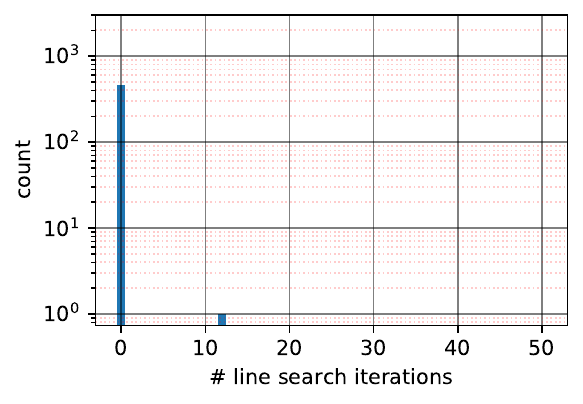}\label{fig:line_search_hist_A9A_nn_123-1_5}
    }
     %add desired spacing between images, e. g. ~, \quad, \qquad, \hfill etc. 
      %(or a blank line to force the subfigure onto a new line)
    \subfigure[123-1 100\%]{
    \includegraphics[width=5cm, height=3.5cm]{./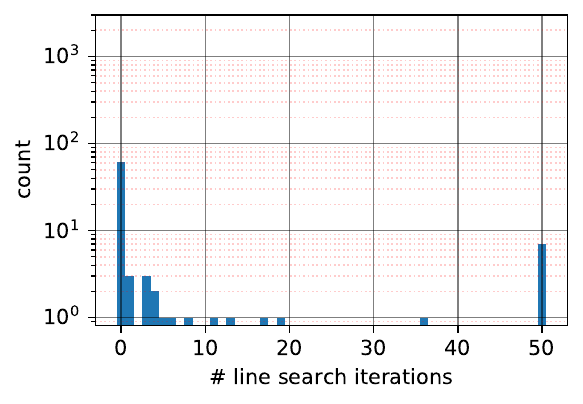}\label{fig:line_search_hist_A9A_nn_123-1_100}
    }
    \subfigure[123-2-1 100\%]{
    \includegraphics[width=5cm, height=3.5cm]{./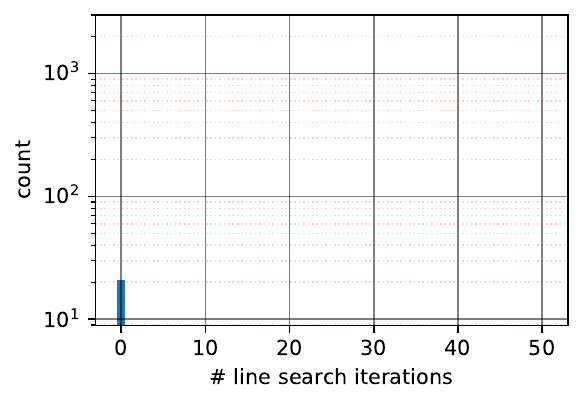}\label{fig:line_search_hist_A9A_nn_123-2-1_100}
    }
    \caption{Plots of the number of line-search iterations during each update for the A9A task for selected runs of the ALAS algorithm with different architectures and sampling sizes. The maximum number of line-search iterations was set to 50.}
    \label{fig:line_search_hist_A9A}
\end{figure}

% STOPPED FIGURES HERE
\paragraph{MNIST Transfer Learning}

The distribution of the individual type 
%\ebnote{ I think we don't need to report many results about the steps type, we may remove some of them?}
%\vknote{I am OK either way, but it's interesting and it's not like we have thousands of different experiments}
 of steps as described in Algorithm~\ref{alg:alasi} for selected runs for the transfer learning task is shown in Figure~\ref{fig:steps_TransferMNIST01}. Again, the most frequently used step type was the \textit{Regularized Newton} %which was the most frequent step type
  and the others occurred very rarely. The distribution of the number of line-search iterations for selected runs of the ALAS algorithm are shown in Figure~\ref{fig:line_search_hist_TransferMNIST01}; the line search was usually rather short (0 iterations of were dominating for most runs) but it was still used quite often.

\begin{figure}
    \centering
    \subfigure[8-2-1 5\%]{
    \includegraphics[width=7.5cm, height=4.1cm]{./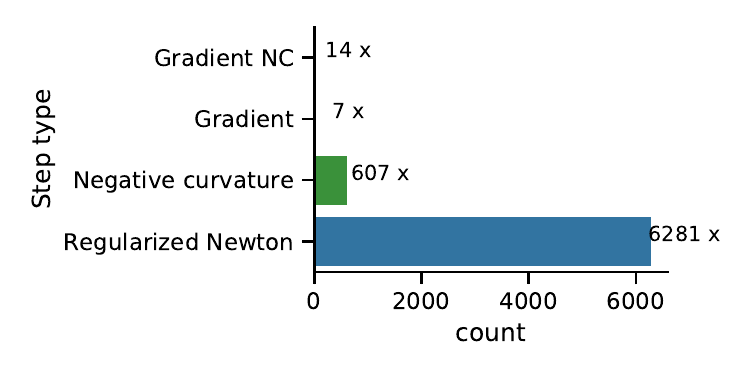}\label{fig:steps_TransferMNIST01_nn_8-2-1}
    }
    ~ %add desired spacing between images, e. g. ~, \quad, \qquad, \hfill etc. 
      %(or a blank line to force the subfigure onto a new line)
    \subfigure[8-4-1 100\%]{
    \includegraphics[width=7.5cm, height=4.1cm]{./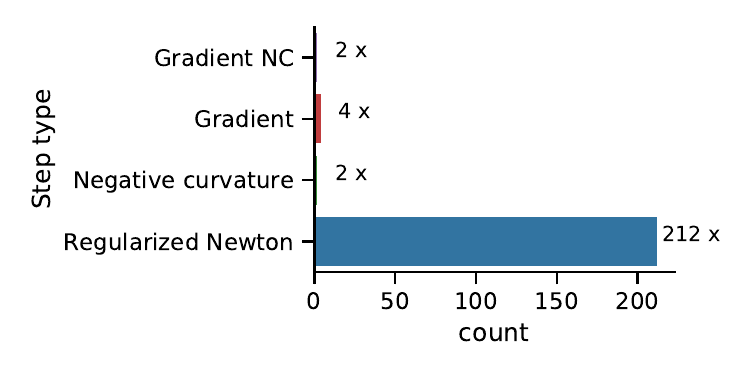}\label{fig:steps_TransferMNIST01_nn_8-4-1_100}
    }
    \caption{The step type distribution of a single run of ALAS algorithm on the transfer learning task.}
    \label{fig:steps_TransferMNIST01}
\end{figure}

\begin{figure}
    \centering
    \subfigure[8-1 100\%]{
    \includegraphics[width=5cm, height=3.5cm]{./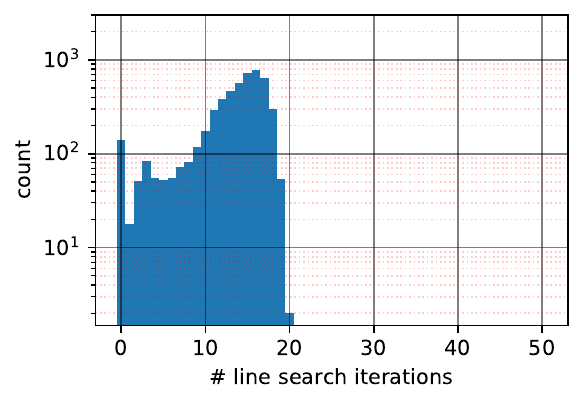}\label{fig:line_search_hist_TransferMNIST01_nn_8-1_100}
    }
     %add desired spacing between images, e. g. ~, \quad, \qquad, \hfill etc. 
      %(or a blank line to force the subfigure onto a new line)
    \subfigure[8-1-1 5\%]{
    \includegraphics[width=5cm, height=3.5cm]{./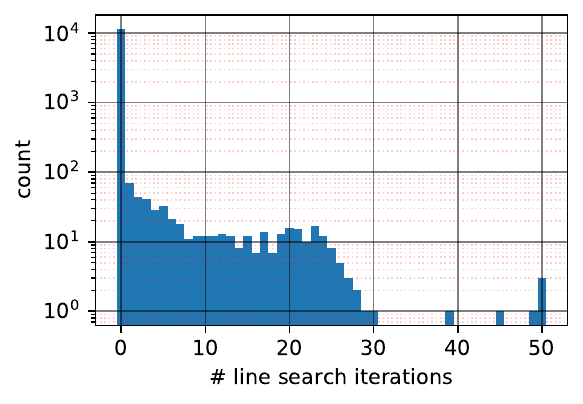}\label{fig:/line_search_hist_TransferMNIST01_nn_8-1-1_5}
    }
    \subfigure[8-1-1 10\%]{
    \includegraphics[width=5cm, height=3.5cm]{./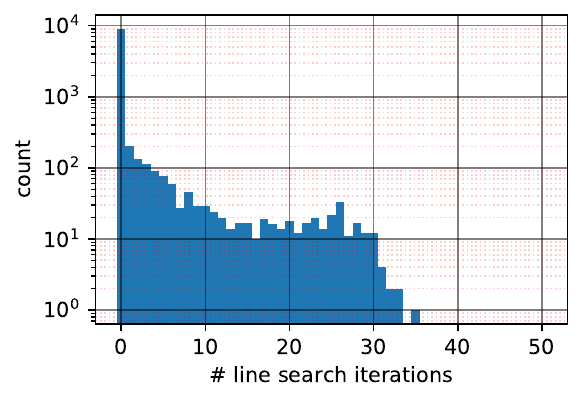}\label{fig:line_search_hist_TransferMNIST01_nn_8-1-1_10}
    }
    
    \subfigure[8-1-1 20\%]{
    \includegraphics[width=5cm, height=3.5cm]{./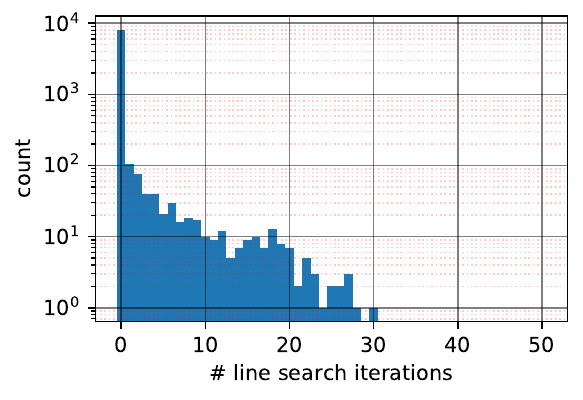}\label{fig:line_search_hist_TransferMNIST01_nn_8-1-1_20}
    }
     %add desired spacing between images, e. g. ~, \quad, \qquad, \hfill etc. 
      %(or a blank line to force the subfigure onto a new line)
    \subfigure[8-1-1 100\%]{
    \includegraphics[width=5cm, height=3.5cm]{./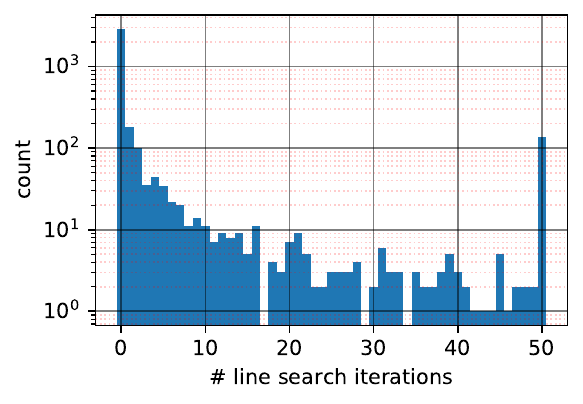}\label{fig:/line_search_hist_TransferMNIST01_nn_8-1-1_100}
    }
    \subfigure[8-2-1 100\%]{
    \includegraphics[width=5cm, height=3.5cm]{./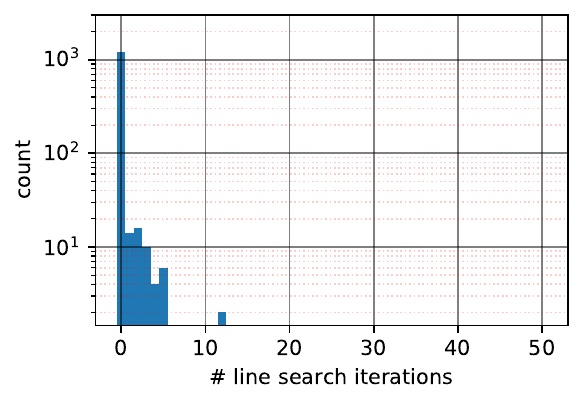}\label{fig:line_search_hist_TransferMNIST01_nn_8-2-1_100}
    }
    \caption{Plots of the number of line-search iterations during each update for the transfer learning task for selected runs of the ALAS algorithm with different architectures and sampling sizes. The maximum number of line-search iterations was set to 50.}
    \label{fig:line_search_hist_TransferMNIST01}
\end{figure}

\paragraph{Artificial NN}

The step type distribution for different runs is shown in Figure~\ref{fig:steps_NN1} --- the most frequent steps in general for this task were the \textit{Regularized Newton} (strongly dominating) and the \textit{Negative curvature} step.

\begin{figure}
    \centering
    \subfigure[2-1-1 20\%]{
    \includegraphics[width=7.5cm, height=4.1cm]{./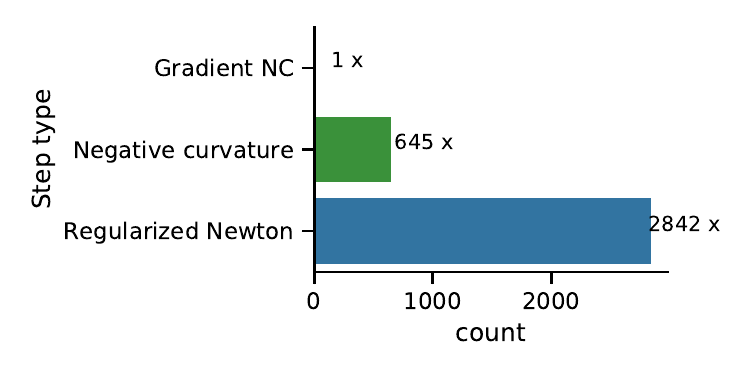}\label{fig:steps_NN1_nn_2-1-1_20}
    }
    ~ %add desired spacing between images, e. g. ~, \quad, \qquad, \hfill etc. 
      %(or a blank line to force the subfigure onto a new line)
    \subfigure[2-4-2-1 5\%]{
    \includegraphics[width=7.5cm, height=4.1cm]{./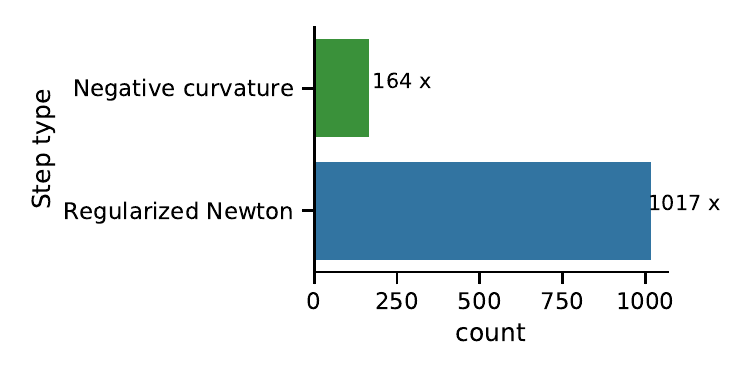}\label{fig:steps_NN1_nn_2-4-2-1_5}
    }
    \caption{The step type distribution of a single run of  ALAS algorithm on the NN1 task.}
    \label{fig:steps_NN1}
\end{figure}

%%%%%%%%%%%%%%%%%%%%%%%%%%%%%%%%%%%%%%%%%%%%%%%%%%%%%%%%%%%%%%%%%%%%%%%%%%%%%
\subsection{Additional plots for IJCNN1}

We tested SGD with several possible values for the learning rate, namely {1, 0.6, 0.3, 0.1, 0.01, 0.001}. Figure~\ref{fig:IJCNN_nn_22-1} shows the performance of a subset of those values: as one may expect, SGD is quite sensitive to the choice of the step size.

\begin{figure}
    \centering
    \subfigure[22-1 5\%]{
        \includegraphics[width=7.5cm, height=4.1cm]{./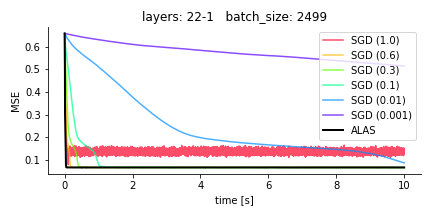}\label{fig:IJCNN_nn_22-1_0.05}
    }
    ~ %add desired spacing between images, e. g. ~, \quad, \qquad, \hfill etc. 
      %(or a blank line to force the subfigure onto a new line)
    \subfigure[22-1 5\% (zoom)]{
        \includegraphics[width=7.5cm, height=4.1cm]{./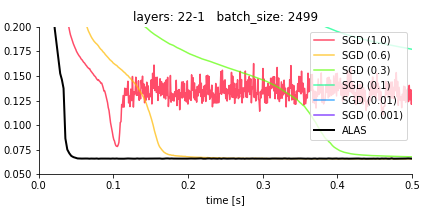}\label{fig:IJCNN_nn_22-1_0.05_zoom}
    }
    
    \subfigure[22-1 10\%]{
        \includegraphics[width=7.5cm, height=4.1cm]{./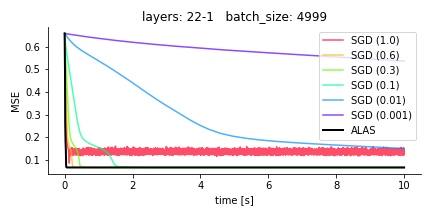}\label{fig:IJCNN_nn_22-1_0.10}
    }
    ~ %add desired spacing between images, e. g. ~, \quad, \qquad, \hfill etc. 
      %(or a blank line to force the subfigure onto a new line)
    \subfigure[22-1 10\% (zoom)]{
        \includegraphics[width=7.5cm, height=4.1cm]{./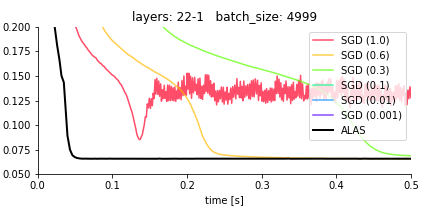}\label{fig:IJCNN_nn_22-1_0.10_zoom}
    }
    
    \subfigure[22-1 20\%]{
        \includegraphics[width=7.5cm, height=4.1cm]{./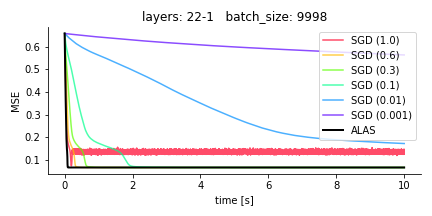}\label{fig:IJCNN_nn_22-1_0.20}
    }
    ~ %add desired spacing between images, e. g. ~, \quad, \qquad, \hfill etc. 
      %(or a blank line to force the subfigure onto a new line)
    \subfigure[22-1 20\% (zoom)]{
        \includegraphics[width=7.5cm, height=4.1cm]{./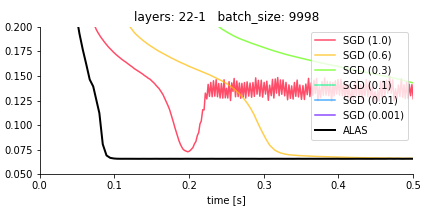}\label{fig:IJCNN_nn_22-1_0.20_zoom}
    }
    
    \subfigure[22-1 100\%]{
        \includegraphics[width=7.5cm, height=4.1cm]{./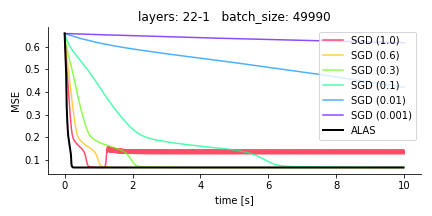}\label{fig:IJCNN_nn_22-1_1.00}
    }
    ~ %add desired spacing between images, e. g. ~, \quad, \qquad, \hfill etc. 
      %(or a blank line to force the subfigure onto a new line)
    \subfigure[22-1 100\% (zoom)]{
        \includegraphics[width=7.5cm, height=4.1cm]{./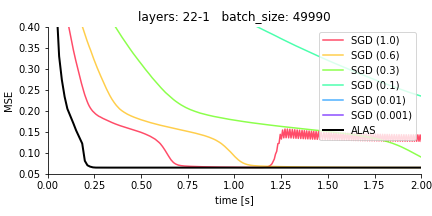}\label{fig:IJCNN_nn_22-1_1.00_zoom}
    }
    \caption{Comparison of  ALAS and SGD (with various learning rates) on the IJCNN1 dataset.}% with a simple neural network with 22 input neurons,  no hidden layer.}% and an output neuron (22-1) compared to the SGD with various learning rates. Full losses are depicted.}
    \label{fig:IJCNN_nn_22-1}
\end{figure}

%%%%%%%%%%%%%%%%%%%%%%%%%%%%%%%%%%%%%%%%%%%%%%%%%%%%%%%%%%%%%%%%%%%%%%%%%%%%%
\section{Additional numerical experiments}

%%%%%%%%%%%%%%%%%%%%%%%%%%%%%%%%%%%%%%%%%%%%%%%%%%%%%%%%%%%%%%%%%%%%%%%%%%%%%%%
\subsection{Experiment using the A9A dataset} 
\label{subsec:numA9A}
The second set of experiments was run on the A9A training dataset, also part of the LIBSVM library~\cite{libsvm}: for this classification dataset, we have $N=32,561$ samples, each of them possessing $n=123$ features.
% which is a subset of the \textit{Census dataset}; this dataset is based on census data and was created in 1994. The prediction task is to determine whether a person makes over 50,000\$ a year. 
%This dataset consists of 32,561 samples, each of them having the form of a 123-dimensional vector.
For this task, we trained two neural networks without hidden layers  and with one hidden layer having a 
single neuron; the optimized function was the MSE as in the first experiment. Our batch sizes consisted of 100\%, 20\%, 10\% and 5\% of the dataset size, and 
the mean-squared error loss was used.  We tested four variants of SGD corresponding to the values 
{1, 0.6, 0.3, 0.1} for the learning rate, and selected the best variant for comparison with ALAS. The 
results are shown in Figures~\ref{fig:A9A_123-1} and \ref{fig:A9A_123-1-1}, as well as 
Table~\ref{tab:alg_comparison_a9a}.
%\crnote{I count three different networks in Table~\ref{tab:alg_comparison_a9a}. Should we keep all three 
%or choose one between 123-1-1?}
%\vnote{Clement: I would keep all three to show that the ALAS works the best for three really different architectures --- it works the best for 123-1 but it is very simple, the 123-1-1 is a bit more complex but the chain of individual neurons (the -1-1 part) does not really make sense but it does not perform that well for 123-2-1 (too many parameters for our implementation) - I have felt that if we were to optimistic and not included this, it would seem that we ignore the cases where it performs worse than SGD - in this case, it performs worse, but we have good explanation why (implementation issues, could be alleviated with approximative algebra).  But if you think so, we can keep only one.}
Our method, ALAS, was significantly better for all sampling sizes on 
training the first network. However, for the second network with more optimization variables (learning parameters),  
ALAS appears to slow down relative to SGD. This appears due to our use of exact linear algebra techniques:  as explained in the introduction of this section, the results could be better for an inexact variant of 
ALAS.
\begin{table}[h!]
\centering
{\small
\begin{tabular}{c  c c c c | c c c c c }
\multicolumn{5}{c|}{Layers: 123-1} & \multicolumn{5}{c}{Layers: 123-2-1} \\ \hline
 alg. & $\skn_k$ & min loss & loss [8-10]s & iter. &  alg. & $\skn_k$ & min loss & loss [8-10]s  & iter.  \\ \hline
ALAS & 5\% & \textbf{0.1176} & \textbf{0.1182} & 458 & ALAS &5  \% & 0.1428 & 0.1437 & 20 \\
 SGD (0.1) & 5\% & 0.1224 & 0.1231 & 8260 &SGD (0.6) & 5 \% & \textbf{0.1066} & \textbf{0.1085} & 5992 \\
\hline
 ALAS & 10 \% & \textbf{0.1168} & \textbf{0.1176} & 360 & ALAS & 10  \% &  0.1464 & 0.1482 & 13 \\
 SGD (0.1) & 10 \% & 0.1245 & 0.1255 & 6168 &SGD (0.6) & 10 \% & \textbf{0.1094} &\textbf{0.1129}  &  3948\\
\hline
ALAS & 20 \% & \textbf{0.1163} & \textbf{0.1171} & 255& ALAS & 20  \% & 0.1436 & 0.1452 & 14 \\
 SGD (0.1) & 20 \% & 0.1396 & 0.1482 & 4152 &SGD (0.6) &  20\% &  \textbf{0.1163} & \textbf{0.1213} & 2449 \\
\hline
 ALAS & 100 \% &\textbf{ 0.1151} & \textbf{0.1151} & 85 & ALAS & 100 \% & 0.1550 & 0.1579 &  6\\
SGD (0.3) &100 \% & 0.2157 & 0.2786 & 645&SGD (0.6) & 100  \% & \textbf{0.1353} & \textbf{0.1365} & 819 \\
\hline
\end{tabular}
}
\caption{Results reached over the given time period $t =10 ~s$ on the A9A task. }
%The scalar in the parenthesis for SGD entries is the step size. The column \textit{median loss [8-10]s} shows the median loss over the last two seconds.}
\label{tab:alg_comparison_a9a}
\end{table}

\begin{figure}
    \centering
    \subfigure[123-1 5\%]{
        \includegraphics[width=7.5cm, height=4.1cm]{./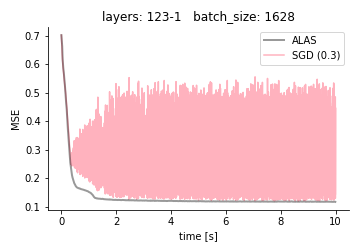}\label{fig:A9A_123-1_0.05}
    }
    ~ %add desired spacing between images, e. g. ~, \quad, \qquad, \hfill etc. 
      %(or a blank line to force the subfigure onto a new line)
    \subfigure[123-1 10\%]{
        \includegraphics[width=7.5cm, height=4.1cm]{./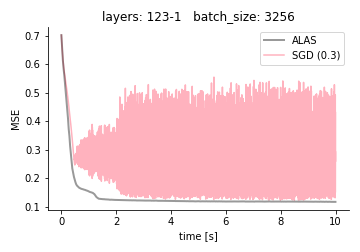}\label{fig:A9A_123-1_0.10}
    }
   \\
    \subfigure[123-1 20\%]{
        \includegraphics[width=7.5cm, height=4.1cm]{./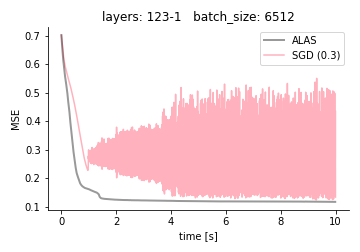}\label{fig:A9A_123-1_0.20}
    }
    ~ %add desired spacing between images, e. g. ~, \quad, \qquad, \hfill etc. 
      %(or a blank line to force the subfigure onto a new line)
    \subfigure[123-1 100\%]{
        \includegraphics[width=7.5cm, height=4.1cm]{./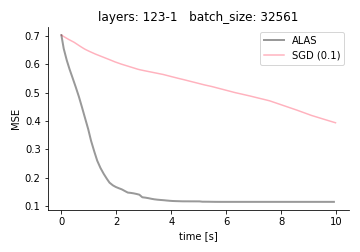}\label{fig:A9A_123-1_1.00}
    }
    \caption{Comparison of  ALAS and SGD (with best performing learning rate)  on the A9A dataset with a simple neural network with 123 input neurons, no hidden layer and an output neuron.}%(123-1) compared to the SGD with best performing learning rate. Full losses are depicted.}
    \label{fig:A9A_123-1}
\end{figure}

\begin{figure}
    \centering
    \subfigure[123-1-1 5\%]{
        \includegraphics[width=7.5cm, height=4.1cm]{./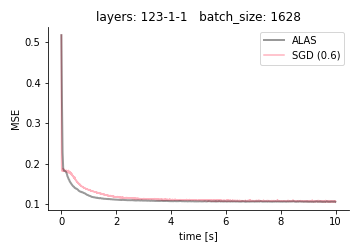}\label{fig:A9A_123-1-1_0.05}
    }
    ~ %add desired spacing between images, e. g. ~, \quad, \qquad, \hfill etc. 
      %(or a blank line to force the subfigure onto a new line)
      \subfigure[123-1-1 10\%]{
        \includegraphics[width=7.5cm, height=4.1cm]{./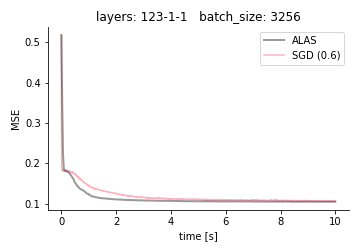}\label{fig:A9A_123-1-1_0.10}
    }\\
    \subfigure[123-1-1 20\%]{
        \includegraphics[width=7.5cm, height=4.1cm]{./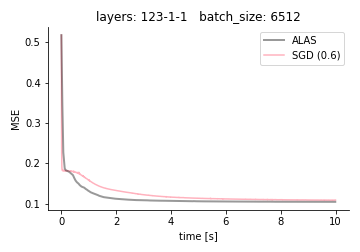}\label{fig:A9A_123-1-1_0.20}
    }
    ~ %add desired spacing between images, e. g. ~, \quad, \qquad, \hfill etc. 
      %(or a blank line to force the subfigure onto a new line)
      \subfigure[123-1-1 100\%]{
        \includegraphics[width=7.5cm, height=4.1cm]{./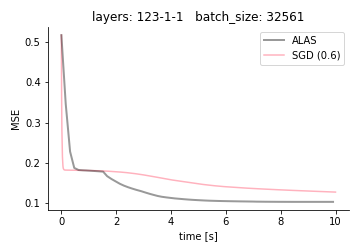}\label{fig:A9A_123-1-1_1.00}
    }
    \caption{Comparison of  ALAS and SGD (with best performing learning rate) on the A9A dataset with a simple neural network with 123 input neurons,  hidden layer with a single neuron and an output neuron.}% (123-1-1) compared to the SGD with best performing learning rate. Full losses are depicted.}
    \label{fig:A9A_123-1-1}
\end{figure}

%%%%%%%%%%%%%%%%%%%%%%%%%%%%%%%%%%%%%%%%%%%%%%%%%%%%%%%%%%%%%%%%%%%%%%%%%%%%%%%
\subsection{An artificial dataset}
\label{subsec:numNN1}
To illustrate the performance of ALAS on highly non-linear problems, we have created two artificial 
datasets, the first of which (thereafter called NN1) was generated using a neural network with random 
weights sampled from a normal distribution ($w_i \in \mathcal{N}(0,3)$) and two input neurons, two hidden 
layers with four and two neurons, respectively. We use hyperbolic tangent activation functions in all layers. 
To produce the actual training data, 50,000 points were sampled from a uniform distribution 
($\vec{x_i} \in \mathcal{R}^2, x_{ij} \in \mathcal{U}(0,1)$) and passed through the generated network to 
produce target values $\{y_i\}_i$. 

The optimization task was then to reconstruct the target values $\{y_i\}_i$ from the generated points 
$\{\vec{x_i}\}_i$ using different neural networks using the MSE as the loss function. We tried 
the values {1, 0.6, 0.3, 0.1 } for the learning rate of SGD, and we compared these variants with ALAS, 
using 100\%, 20 \%, 10 \%, and  5\% of the sample sizes. The results are shown in 
Table~\ref{tab:alg_comparison_nn1}: the algorithm ALAS performed best with a single exception when it got  
stuck in a worse local optimum than the SGD variant.

\begin{table}[h!]
\centering
{\small
\begin{tabular}{c  c c c c | c c c c c }
\multicolumn{5}{c|}{Layers: 2-1-1} & \multicolumn{5}{c}{Layers: 2-2-1} \\ \hline
 alg. & $\skn_k$ & min loss &  loss [8-10]s & iter.  & alg. & $\skn_k$ & min loss & loss [8-10]s & iter. \\ \hline
 ALAS & 5\% & $ \mathbf{	9.08e{-10}}$  & $\mathbf{ 1.12e{-9}}$ &  7471& ALAS & 5\% & $\mathbf{ 6.59e{-10}}$  & $ 4.000 $ & 5736 \\
 SGD (1.0) & 5\% & $  2.49e{-6}$ & $  2.76e{-6}$  & 15432& SGD (1.0) & 5\% & $ 1.60e{-6}$ & $\mathbf{ 1.78e{-6}}$  & 14105 \\ \hline
 ALAS & 10\% & $\mathbf{ 8.94e{-10}}$  & $\mathbf{ 1.01 e{-9}}$ &  5595& ALAS & 10\% & $ \mathbf{ 1.59e{-10}}$  & $ \mathbf{2.18e{-10}}$ & 2352 \\
 SGD (1.0) & 10\% & $ 2.77e{-6}$ & $  3.09e{-6}$  & 14015 & SGD (1.0) & 10\% & $  1.86e{-6}$ & $ 2.07e{-6}$  & 12134 \\ \hline
 ALAS & 20\% & $\mathbf{ 8.91e{-10}}$  & $\mathbf{ 9.39e{-10}}$ & 3353 & ALAS & 20\% & $ \mathbf{ 1.96e{-10}}$  & $\mathbf{ 2.23e{-10}}$ & 1267 \\
 SGD (1.0) & 20\% & $ 3.41e{-6}$ & $ 3.80e{-6}$  &  11533& SGD (1.0) & 20\% & $ 2.20e{-6}$ & $ 2.44e{-6}$  & 10283 \\ \hline
 ALAS & 100\% & $\mathbf{ 1.15e{-9}}$  & $\mathbf{ 1.211e{-9}}$ &  538& ALAS & 100\% & $ \mathbf{9.62e{-10}}$  & $\mathbf{9.99e{-10}}$ & 304 \\
  SGD (1.0) & 100\% & $ 7.51e{-6}$ & $ 8.42e{-6}$  & 5503 & SGD (1.0) & 100\% & $ 5.99e{-6}$ & $ 6.68e{-6}$  &  3833\\ \hline 
 \multicolumn{5}{c|}{Layers: 2-4-1-1} & \multicolumn{5}{c}{Layers: 2-4-2-1} \\ \hline
 ALAS & 5\% & $ \mathbf{9.21e{-10}}$  & $\mathbf{ 9.23e{-10}}$ &  1529 & ALAS & 5\% & $\mathbf{ 9.27e{-10}}$  & $ \mathbf{9.34e{-10}}$ & 922 \\
 SGD (1.0) & 5\% & $ 5.47e{-7}$ & $ 5.60e{-7}$  &  13224& SGD (1.0) & 5\% & $  1.68e{-6}$ & $ 1.86e{-6}$  &  11909\\ \hline
 ALAS & 10\% & $\mathbf{ 9.94e{-10}}$  & $ \mathbf{1.03e{-9}}$ &  818& ALAS & 10\% & $ \mathbf{	1.03e{-9}}$  & $ \mathbf{1.05e{-9}}$ &  493\\
 SGD (1.0) & 10\% & $ 5.75e{-7}$ & $ 5.88e{-7}$  & 10873 & SGD (1.0) & 10\% & $ 2.17e{-6}$ & $ 2.39e{-6}$  & 9192 \\ \hline
 ALAS & 20\% & $ \mathbf{1.26e{-9}}$  & $\mathbf{ 1.34e{-9}}$ & 485  & ALAS & 20\% & $\mathbf{ 	1.25e{-9}}$  & $\mathbf{ 1.30 e{-9}}$ & 262 \\
 SGD (1.0) & 20\% & $ 6.03e{-7}$ & $ 6.12e{-7}$  & 8223& SGD (1.0) & 20\% & $ 2.91e{-6}$ & $ 3.20e{-6}$  & 6850 \\\hline
 ALAS & 100\% & $ \mathbf{4.05e{-9}}$  & $\mathbf{4.43 e{-9}}$ & 94 & ALAS & 100\% & $ \mathbf{	3.59e{-9}}$  & $\mathbf{ 3.96e{-9}}$ & 55 \\
 SGD (1.0) & 100\% & $ 6.76e{-7}$ & $ 6.80e{-7}$  & 2819 & SGD (1.0) & 100\% & $ 9.76e{-6}$ & $ 1.09e{-5}$  & 2008  \\ \hline
\end{tabular}
}
\caption{Results reached over the given time period $t =10 ~s$ on the artificial dataset NN1. }%The scalar in the parenthesis for SGD entries is the step size. The column \textit{median loss [8-10]s} shows the median loss over the last two seconds.}
\label{tab:alg_comparison_nn1}
\end{table}

\subsection{A second artificial dataset}

The second artificial dataset, called NN2, was created using the same process as described in the main paper for the first. 
For this second dataset, we used a deeper neural network in order to 
introduce more nonlinearities: this network consisted of four input neurons, three hidden layers with up to 
four neurons, and one output layer producing the targets $\{y_i\}_i$. The networks used for experiments had one or two hidden layers --- first hidden layer had always four neurons and the second had one or two neurons if present. All neurons used a hyperbolic tangent as their activation function, the optimized function was the MSE. Few runs of the ALAS and the SGD algorithms are shown in Figures~\ref{fig:NN2_4-4-1} and \ref{fig:NN2_4-4-2-1}. The step type distribution for different runs is shown in Figure~\ref{fig:steps_NN2} --- the \textit{Regularized Newton} step type was the most frequent.
%The distributions of number of line search iterations for few runs of the ALAS algorithm are shown in Figure~\ref{fig:line_search_hist_NN2}; the number of line search iterations for each update tended to be generally low but sometimes slightly longer line searches have occurred.
Our implementation of ALAS performed generally better than the best SGD variant, sometimes by a significant 
margin.

\begin{table}[h!]
\centering
{\small
\begin{tabular}{c  c c c c | c c c c c }
\multicolumn{5}{c|}{Layers: 4-4-1} & \multicolumn{5}{c}{Layers: 4-4-2-1}  \\ \hline
 alg. & $\skn_k$ & min loss & loss [8-10]s & iter. &  alg. & $\skn_k$ & min loss & loss [8-10]s  & iter.  \\ \hline
 ALAS & 5\% &  \textbf{0.0153} & \textbf{0.0163} & 1654  & ALAS & 5\% & 	\textbf{0.0150}  & \textbf{0.0154} & 1046 \\
 SGD (1.0) & 5\% & 0.0411  &0.0466 &  13185& SGD (1.0) & 5\% &  0.0167 & 0.0222 & 11814 \\ \hline
 ALAS & 10\% & \textbf{0.0145}  & \textbf{0.0156} & 1218& ALAS & 10\% & \textbf{0.0160 } & \textbf{0.0161}  &  650\\
 SGD (1.0) & 10\% & 0.0466  & 0.0513 & 10548 & SGD (1.0) & 10\% & 0.0185  & 0.0260 &  8764\\ \hline
 ALAS & 20\% & \textbf{0.0158}   & \textbf{0.0168} &  727& ALAS & 20\% & \textbf{0.0159}  & \textbf{0.0160} & 320 \\
 SGD (1.0) & 20\% &  0.0527 & 0.0562 & 7617 & SGD (1.0) & 20\% & 0.0242  & 0.0317 & 6123 \\ \hline
 ALAS & 100\% & \textbf{0.0122}  & \textbf{0.0133 }&  126& ALAS & 100\% & \textbf{0.0148}  & \textbf{0.0153} & 69 \\
 SGD (1.0) & 100\% & 0.0646  & 0.0651&  3068& SGD (1.0) & 100\% & 0.0384  & 0.0489 & 2222 \\ \hline
\end{tabular}
}
\caption{The comparison of the minimal (full) losses reached over the given time period $t =10 ~s$ on the artificial dataset NN2. The number in the parenthesis for SGD entries is the step size. The column \textit{loss [8-10]s} shows the median loss over the last two seconds.}
\label{tab:alg_comparison_nn2}
\end{table}

\begin{figure}
    \centering
    \subfigure[4-4-1 5\%]{
        \includegraphics[width=7.5cm, height=4.1cm]{./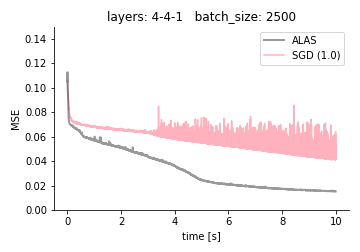}\label{fig:NN2_4-4-1_0.05}
    }
    ~ %add desired spacing between images, e. g. ~, \quad, \qquad, \hfill etc. 
      %(or a blank line to force the subfigure onto a new line)
    \subfigure[4-4-1 10\%]{
        \includegraphics[width=7.5cm, height=4.1cm]{./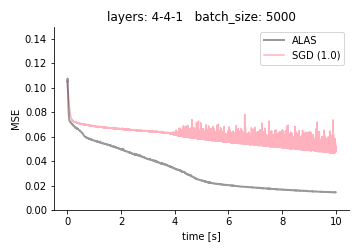}\label{fig:NN2_4-4-1_0.10}
    }
    \\
    \subfigure[4-4-1 20\%]{
        \includegraphics[width=7.5cm, height=4.1cm]{./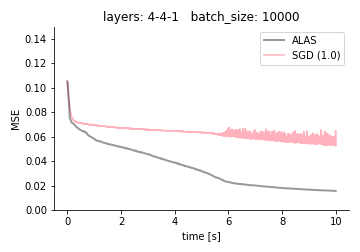}\label{fig:NN2_4-4-1_0.20}
    }
    ~ %add desired spacing between images, e. g. ~, \quad, \qquad, \hfill etc. 
      %(or a blank line to force the subfigure onto a new line)
    \subfigure[4-4-1 100\%]{
        \includegraphics[width=7.5cm, height=4.1cm]{./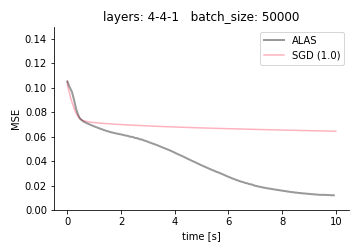}\label{fig:NN2_4-4-1_1.00}
    }
    \caption{Evaluation of the ALAS algorithm on artifical task NN2 compared to the SGD with best performing learning rate. Full losses are depicted.}
    \label{fig:NN2_4-4-1}
\end{figure}

\begin{figure}
    \centering
    \subfigure[4-4-2-1 5\%]{
    \includegraphics[width=7.5cm, height=4.1cm]{./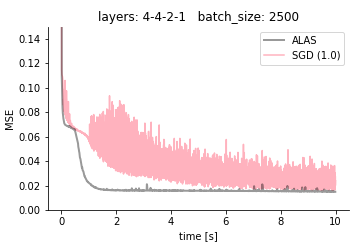}\label{fig:NN2_4-4-2-1_0.05}
    }
    ~ %add desired spacing between images, e. g. ~, \quad, \qquad, \hfill etc. 
      %(or a blank line to force the subfigure onto a new line)
    \subfigure[4-4-2-1 10\%]{
    \includegraphics[width=7.5cm, height=4.1cm]{./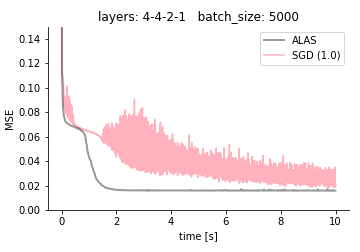}\label{fig:NN2_4-4-2-1_0.10}
    }
     \\
    \subfigure[4-4-2-1 20\%]{
    \includegraphics[width=7.5cm, height=4.1cm]{./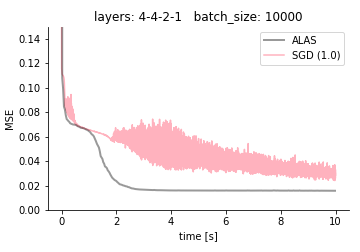}\label{fig:NN2_4-4-2-1_0.20}
    }
    ~ %add desired spacing between images, e. g. ~, \quad, \qquad, \hfill etc. 
      %(or a blank line to force the subfigure onto a new line)
    \subfigure[4-4-2-1 100\%]{
    \includegraphics[width=7.5cm, height=4.1cm]{./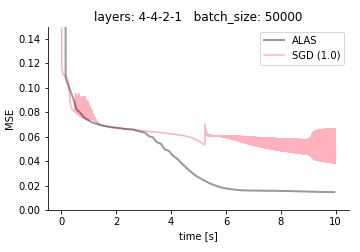}\label{fig:NN2_4-4-2-1_1.00}
    }
    \caption{Evaluation of the ALAS algorithm on artificial task NN2 compared to the SGD with best performing learning rate. Full losses are depicted.}
    \label{fig:NN2_4-4-2-1}
\end{figure}

\begin{figure}
    \centering
    \subfigure[4-2-1 10\%]{
    \includegraphics[width=7.5cm, height=4.1cm]{./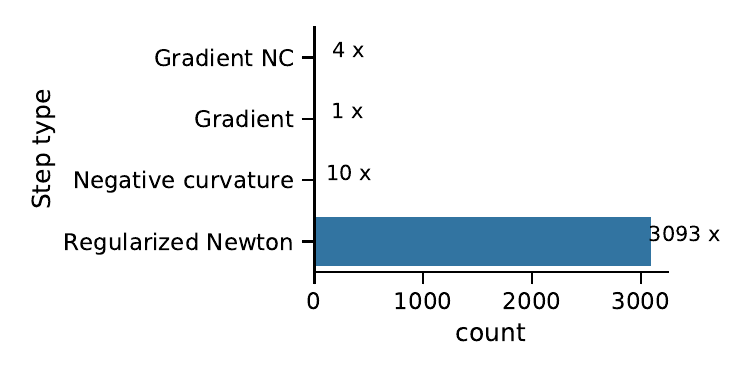}\label{fig:steps_NN2_nn_4-2-1_10}
    }
    ~ %add desired spacing between images, e. g. ~, \quad, \qquad, \hfill etc. 
      %(or a blank line to force the subfigure onto a new line)
    \subfigure[4-4-2-1 100\%]{
    \includegraphics[width=7.5cm, height=4.1cm]{./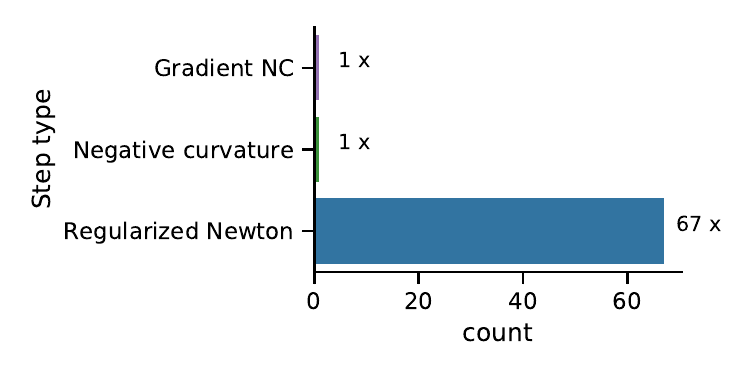}\label{fig:steps_NN2_nn_4-4-2-1_100}
    }
    \caption{The step type distribution of a single run of ALAS algorithm on the NN2 task.}
    \label{fig:steps_NN2}
\end{figure}

%\begin{figure}
%    \centering
%    \subfigure[4-4-1-1 5\%]{
 %   \includegraphics[width=5cm, height=3.5cm]{./line_search_hist_NN2_nn_4-4-1-1_alg-alas_None_batch_2500_ps_1.0_rep_0.pdf}\label{fig:line_search_hist_NN2_nn_4-4-1-1_5}
%    }
     %add desired spacing between images, e. g. ~, \quad, \qquad, \hfill etc. 
      %(or a blank line to force the subfigure onto a new line)
%    \subfigure[4-4-1-1 10\%]{
%    \includegraphics[width=5cm, height=3.5cm]{./line_search_hist_NN2_nn_4-4-1-1_alg-alas_None_batch_5000_ps_1.0_rep_0.pdf}\label{fig:line_search_hist_NN2_nn_4-4-1-1_10}
%    }
%    \subfigure[4-4-1-1 100\%]{
%    \includegraphics[width=5cm, height=3.5cm]{./line_search_hist_NN2_nn_4-4-1-1_alg-alas_None_batch_50000_ps_1.0_rep_0.pdf}\label{fig:line_search_hist_NN2_nn_4-4-1-1_100}
%    }
%    \caption{Plots of the number of line search iterations during each update for the on the NN2 task for selected runs of the ALAS algorithm with 4-4-1-1 layer configuration and different sampling sizes. The maximum number of line search iteration was set to 50.}
%    \label{fig:line_search_hist_NN2}
%\end{figure}

\end{appendices}

%%%%%%%%%%%%%%%%%
\end{document}